\newtheorem{example}{Example}[section]
\newtheorem{theorem}[example]{Theorem}
\newtheorem{corollary}[example]{Corollary}
\newtheorem{conjecture}[example]{Conjecture}
\newtheorem{proposition}[example]{Proposition}
\newtheorem{lemma}[example]{Lemma}
\def\Proof{\noindent \it Proof -- \rm}
\def\qed{\hspace{3.5mm} \hfill \vbox{\hrule height 3pt depth 2 pt width 2mm}
\bigskip}
\def\vect{{\rm Vect\,}}
\def\Sym{{\bf Sym}}
\def\NCSF{{\bf Sym}}
\def\PP{\mathring{\mathcal P}}
\def\GP{{\mathcal P}}   
\def\Std{{\rm Std}}
\def\Eu{{\mathcal E}^\uparrow}
\def\Ed{{\mathcal E}^\downarrow}
\def\<{\langle}
\def\>{\rangle}
\def\C{\operatorname{\mathbb C}}
\def\SG{{\mathfrak S}}
\def\tensor{\otimes}
\def\Des{\operatorname{Des}}
\def\dim{{\rm dim}}
\def\projSym#1{\mathfrak{proj}(#1)} 
\def\shuff#1#2{\mathbin{
\hbox{\vbox{ \hbox{\vrule \hskip#2 \vrule height#1 width 0pt
}%
\hrule}%
\vbox{ \hbox{\vrule \hskip#2 \vrule height#1 width 0pt
\vrule }%
\hrule}%
}}}
\def\shuf{{\mathchoice{\shuff{7pt}{3.5pt}}%
{\shuff{6pt}{3pt}}%
{\shuff{4pt}{2pt}}%
{\shuff{3pt}{1.5pt}}}}%
\def\shuffle{\,\shuf\,}
\def\MR{{\bf MR}}
\def\bA{{\bar A}}
\def\BSym{{\bf BSym}}
\def\div{\mid}
\def\ndiv{\nmid}
\newcommand{\Zr}[1][r]{\zeta^{(#1)}}
\newenvironment{NOTE}[1][NOTE]
 {\bigskip\begin{center}\color{red}\begin{boxedminipage}{4.5in}\setlength{\parindent}{1em}\noindent\color{blue}\textbf{#1. }}
 {\end{boxedminipage}\end{center}\bigskip}
\def\dwna{\mathord{\downarrow}}
\def\ordr{<}
\def\ordreq{\leq}
\def\epr{{e'}^{(r)}}
\def\er{{e}^{(r)}}
\def\gf#1#2{\genfrac{}{}{0pt}{}{#1}{#2}}
\title{Representation theory of the higher order peak algebras}
\author[J.-C.~Novelli,  F. Saliola, J.-Y.~Thibon]%
{Jean-Christophe Novelli, Franco Saliola, Jean-Yves Thibon}
\address{Institut Gaspard Monge, Universit\'e Paris-Est
Marne-la-Vall\'ee, \\ 5 Boulevard Descartes \\ Champs-sur-Marne \\
77454 Marne-la-Vall\'ee cedex 2 \\ France}
\email[Jean-Christophe Novelli]{novelli@univ-mlv.fr (corresponding author)}
\email[Franco Saliola]{saliola@gmail.com}
\email[Jean-Yves Thibon]{jyt@univ-mlv.fr} 
\date{\today}
\begin{document}

\begin{abstract}
The representation theory (idempotents, quivers, Cartan invariants and Loewy
series) of the  higher order unital peak algebras is investigated.
On the way, we obtain new interpretations and generating functions for the
idempotents of descent algebras introduced in [F. Saliola, J. Algebra 320
(2008) 3866.]
\end{abstract}

\maketitle

\section{Introduction}

A \emph{descent} of a permutation $\sigma\in\SG_n$ is an index $i$
such that $\sigma(i)>\sigma(i+1)$. A descent is a \emph{peak} if moreover
$i>1$ and $\sigma(i)>\sigma(i-1)$.
The sums of permutations with a given descent set span a subalgebra of the
group algebra, the \emph{descent algebra} $\Sigma_n$.
The \emph{peak algebra} $\PP_n$ of $\SG_n$ is a subalgebra of its descent
algebra, spanned by sums of permutations having the same peak set. This
algebra has no unit.

The direct sum of the peak algebras is a Hopf subalgebra of the
direct sum of all descent algebras, which can itself be identified with
$\Sym$, the Hopf algebra of noncommutative symmetric functions~\cite{NCSF1}.
Actually, in~\cite{BHT} it was shown that most of the results on
the peak algebras can be deduced from the case $q=-1$ of a $q$-identity
of~\cite{NCSF2}. Specializing $q$ to other roots of unity, Krob and the third
author introduced and studied \emph{higher order peak algebras} in~\cite{KT}.
Again, these are non-unital. 

In~\cite{ABN}, it has been shown that the peak algebra of $\SG_n$ can be naturally
extended to a unital algebra, which is obtained as a homomorphic image of the
descent algebra of the hyperoctahedral group $B_n$. This construction has
been extended in \cite{ANT}. It is shown there that unital versions
of the higher order peak algebras 
can be obtained as homomorphic
images of the \emph{Mantaci-Reutenauer algebras} of type $B$.

Our purpose here is to investigate the representation theory of the unital
higher order peak algebras.  The classical case has been worked out
in \cite{ANO}. In this reference, idempotents for the peak algebras were
obtained from those of the descent algebras of type $B$ constructed in
\cite{BB}. 

To deal with the general case, we need a different construction of
idempotents. It turns out that the recursive algorithm introduced
in~\cite{Sal} for idempotents of descent algebras can be adapted to higher
order peak algebras.

In order to achieve this, we need a better understanding of the idempotents
generated by the algorithm of \cite{Sal}. 
Interpreting them as noncommutative symmetric functions, we find that in type
$A$, these idempotents are associated with a known family of Lie idempotents,
the so-called \emph{Zassenhaus idempotents}, by the construction
of~\cite{NCSF2}. We then show that similar Lie idempotents can be defined in
type $B$ as well, which yields a simple generating function in terms of
noncommutative symmetric functions of type $B$.

This being understood, we obtain complete families of orthogonal idempotents
for the higher order peak algebras, which can be described either by
recurrence relations as in \cite{Sal} or by generating series of
noncommutative symmetric functions.

Finally, we make use of these idempotents to study the quivers, Cartan
invariants, and the Loewy series of the unital higher order peak algebras.

{\footnotesize {\it Acknowledgments.-}
This work has been partially supported by Agence Nationale de la Recherche,
grant ANR-06-BLAN-0380.
The authors are grateful to the contributors of both the MuPAD and Sage
projects~\cite{Sage}, and especially to those of the combinat package, for
providing the development environment for this research (see~\cite{HT} for an
introduction to MuPAD-Combinat).
}

\section{Notations and background}

\subsection{Noncommutative symmetric functions}

We will assume familiarity with the standard notations of the theory of
noncommutative symmetric functions~\cite{NCSF1} and with the main results of
\cite{KT,ANT}. We recall here only a few essential definitions.

The Hopf algebra of noncommutative symmetric functions is denoted by $\Sym$,
or by $\Sym(A)$ if we consider the realization in terms of an auxiliary
alphabet $A$. Linear bases of $\Sym_n$ are labelled by compositions
$I=(i_1,\ldots,i_r)$ of $n$ (we write $I\vDash n$).
The noncommutative complete and elementary functions are denoted
by $S_n$ and $\Lambda_n$, and $S^I=S_{i_1}\cdots S_{i_r}$.
The ribbon basis is denoted by $R_I$.
The \emph{descent set} of $I$ is
$\Des(I) = \{ i_1,\ i_1+i_2, \ldots , i_1+\dots+i_{r-1}\}$.
The \emph{descent composition} of a permutation $\sigma\in\SG_n$
is the composition $I=D(\sigma)$ of $n$ whose descent set is the descent set
of $\sigma$.

\subsection{The Mantaci-Reutenauer algebra of type $B$}

We denote by $\MR$ the free product $\Sym\star\Sym$ of two copies of the Hopf
algebra of noncommutative symmetric functions \cite{MR}. That is, $\MR$ is the
free associative algebra on two sequences $(S_n)$ and $(S_{\bar n})$
($n\ge 1$).
We regard the two copies of $\Sym$ as noncommutative symmetric functions on
two auxiliary alphabets: $S_n=S_n(A)$ and $S_{\bar n}=S_n(\bA)$.
We denote by $F\mapsto \bar F$ the involutive {\it anti}automorphism which
exchanges $S_n$ and $S_{\bar n}$.
The bialgebra structure is defined by the requirement that the series
\begin{equation}
\sigma_1=\sum_{n\ge 0}S_n \ \text{and}\ \bar\sigma_1=\sum_{n\ge 0}S_{\bar n}
\end{equation}
are grouplike.
The internal product of $\MR$ can be computed from the splitting formula
\begin{equation}
\label{splitting}
(f_1\dots f_r)*g = \mu_r\cdot (f_1\otimes\dots\otimes f_r)*_r \Delta^r g\,,
\end{equation}
where $\mu_r$ is $r$-fold multiplication, and $\Delta^r$ the iterated
coproduct with values in the $r$-th tensor power, and the conditions:
$\sigma_1$ is neutral,
$\bar\sigma_1$ is central, and
$\bar\sigma_1*\bar\sigma_1=\sigma_1$.

\subsection{Noncommutative symmetric functions of type $B$}

Noncommutative symmetric functions of type $B$ were introduced in~\cite{Chow}
as the right $\Sym$-module $\BSym$ freely generated by another sequence
$(\tilde S_n)$ ($n\ge 0$, $\tilde S_0=1$) of homogeneous elements, with
$\tilde\sigma_1$ grouplike. This is a coalgebra, but not an algebra.
It is endowed with an internal product, for which each homogeneous
component $\BSym_n$ is anti-isomorphic to the descent algebra of $B_n$.

It should be noted that with this definition, the restriction of the internal
product of $\BSym$ to $\Sym$ is not the internal product of $\Sym$. To remedy
this inconvenience, we use a different realization of $\BSym$.
We embed $\BSym$ as a sub-coalgebra and sub-$\Sym$-module of $\MR$ as follows.
Define, for $F\in \Sym(A)$,
\begin{equation}
F^\sharp = F(A|\bar A) = F(A-q\bar A)|_{q=-1}
\end{equation}
called the supersymmetric version, or superization, of $F$ \cite{NT-super}.
It is also equal to
\begin{equation}
\label{superization}
F^\sharp = F*\sigma_1^\sharp\,.
\end{equation}
Indeed, $\sigma_1^\sharp$ is grouplike,
and for $F=S^I$, the splitting formula gives
\begin{equation}
(S_{i_1}\cdots S_{i_r})*\sigma_1^\sharp
=\mu_r[(S_{i_1}\otimes\cdots\otimes S_{i_r})*
(\sigma_1^\sharp\otimes\cdots\otimes\sigma_1^\sharp)]=S^{I\sharp}\,.
\end{equation}
We have
\begin{equation}
\sigma_1^\sharp = \bar\lambda_1\sigma_1=\sum\Lambda_{\bar i}S_j \,.
\end{equation}
The element $\bar\sigma_1$ is central for the internal product, and
\begin{equation}
\bar\sigma_1 * F(A,\bar A) =  F(\bar A,A) = F*\bar\sigma_1\,.
\end{equation}
The basis element $\tilde S^I$ of $\BSym$, where $I=(i_0,i_1,\ldots,i_r)$ is a
type $B$-composition (that is, $i_0$ may be $0$), can be embedded as
\begin{equation}
\tilde S^I = S_{i_0}(A)S^{i_1i_2\cdots i_r}(A|\bar A)\,.
\end{equation}
We will identify $\BSym$ with its image under this embedding.

\subsection{Other notations}

For a partition $\lambda$, we denote by $m_i(\lambda)$ the multiplicity of
$i$ in $\lambda$ and set $m_\lambda:= \prod_{i\geq1}m_i(\lambda)!$.

The reverse refinement order on compositions is denoted by $\preceq$.
The nonincreasing rearrangement of a composition is denoted
by $I\dwna$. The refinement order on partitions is denoted by $\prec_{p}$:
$\lambda\prec_{p}\mu$ if $\lambda$ is finer than $\mu$, that is, each part
of $\mu$ is a sum of parts of $\lambda$.

\section{Descent algebras of type A}

\subsection{Principal idempotents}

In~\cite{Sal}, a recursive construction of complete sets of orthogonal
idempotents of descent algebras has been described. 
In~\cite{NCSF2}, one finds a general method for constructing such families
from an arbitrary sequence of Lie idempotents, as well as many remarkable
families of Lie idempotents.
It is therefore natural to investigate whether the resulting idempotents can
be derived from a (possibly known) sequence of Lie idempotents.
We shall show that it is indeed the case.

Let $P_n$ be the sequence of partitions of $n$ ordered in the following way:
first, sort them by decreasing length, then, for each length, order them by
reverse lexicographic order. We denote this order by $\leq$.
For example,  
\begin{equation}
P_5 = [11111,\ 2111,\ 311,\ 221,\ 41,\ 32,\ 5].
\end{equation}
Now, start with
\begin{equation}
e_{1^n} := \frac1{n!}{S_1^n},
\end{equation}
and define by induction
\begin{equation}
\label{eRecursion}
e_\lambda := \frac{1}{m_\lambda} S^\lambda * \left(S_n - \sum_{\mu<\lambda}
e_\mu\right).
\end{equation}

\begin{theorem}[\cite{Sal}]
The family $(e_\lambda)_{\lambda\vdash n}$ forms a complete system of
orthogonal idempotents for $\NCSF_n$.
\end{theorem}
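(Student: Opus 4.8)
The plan is to show that the $e_\lambda$ are idempotent, that they are pairwise orthogonal, and that they sum to the identity $S_n$ of $\NCSF_n$ under the internal product; by a standard argument (completeness plus orthogonality) the first claim actually follows from the other two together with $\sum_\lambda e_\lambda = S_n$, so I would organize the proof around establishing orthogonality and completeness simultaneously by induction. First I would record the key structural fact about the internal product on $\NCSF_n$: for a composition $I\vDash n$ and $F\in\NCSF_n$, the element $S^I * F$ depends only on the underlying partition in the relevant quotient, and more importantly the map $F\mapsto S^\lambda * F$ is, up to the scalar $m_\lambda$, the projection associated to the parabolic structure indexed by $\lambda$. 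Concretely, $S^\lambda * S^\mu$ can be computed via the splitting formula \eqref{splitting}, and $\frac{1}{m_\lambda}S^\lambda * (-)$ is an idempotent operator on $\NCSF_n$ when restricted appropriately. This is the algebraic engine; everything else is bookkeeping against the order $\leq$.

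Next I would set up the induction on the order $\leq$ on $P_n$, exactly mirroring \eqref{eRecursion}. The base case $e_{1^n} = \frac{1}{n!}S_1^n$ is idempotent because $S_1^n * S_1^n = n! \, S_1^n$ (the coproduct of $S_1^n$ distributes into $n!$ ways, and $\Delta$ is multiplicative on the grouplike-ish piece). For the inductive step, assume $e_\mu$ for $\mu < \lambda$ are orthogonal idempotents. Set $f_\lambda := S_n - \sum_{\mu<\lambda} e_\mu$; this is the complement of the sum of the idempotents already constructed, hence itself idempotent, and it is orthogonal to each $e_\mu$ with $\mu<\lambda$: $f_\lambda * e_\mu = e_\mu * f_\lambda = 0$. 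Then $e_\lambda = \frac{1}{m_\lambda}S^\lambda * f_\lambda$. To see $e_\lambda * e_\mu = 0$ for $\mu<\lambda$ I would compute $e_\lambda * e_\mu = \frac{1}{m_\lambda}S^\lambda * (f_\lambda * e_\mu) = 0$, using associativity of the internal product and $f_\lambda * e_\mu = 0$; the reverse order $e_\mu * e_\lambda = 0$ needs the observation that $e_\mu * S^\lambda$ lies in the span of the $e_\nu$ with $\nu\leq\mu<\lambda$ (because multiplying by $S^\lambda$ can only refine, hence move down or stay, in the order $\leq$ — this is where the particular choice of ordering, decreasing length then reverse lex, is used), so $e_\mu * e_\lambda = e_\mu * \frac{1}{m_\lambda}S^\lambda * f_\lambda$ is a combination of $e_\nu * f_\lambda = 0$.

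For idempotency of $e_\lambda$ itself: $e_\lambda * e_\lambda = \frac{1}{m_\lambda}S^\lambda * (f_\lambda * e_\lambda)$, and I would show $f_\lambda * e_\lambda = e_\lambda$, equivalently $e_\mu * e_\lambda = 0$ for all $\mu<\lambda$, which is exactly what the previous paragraph established, combined with $\frac{1}{m_\lambda}S^\lambda * e_\lambda = e_\lambda$; the latter reduces to $\frac{1}{m_\lambda}S^\lambda * S^\lambda = $ the corresponding parabolic idempotent acting as identity on its own image, i.e.\ $S^\lambda * S^\lambda = m_\lambda S^\lambda + (\text{lower terms})$ and the lower terms get killed after applying $* f_\lambda$ by induction. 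Finally, completeness: by construction $\sum_{\mu\leq\lambda} e_\mu = S_n - f_\lambda + \frac{1}{m_\lambda}S^\lambda*f_\lambda$, and taking $\lambda$ to be the maximal partition $(n)$ gives $S^{(n)} = S_n$, so $f_{(n)} - \frac{1}{m_{(n)}}S_n * f_{(n)} = f_{(n)} - f_{(n)} = 0$ since $S_n$ is the unit for the internal product; hence $\sum_{\lambda\vdash n} e_\lambda = S_n$, the identity of $\NCSF_n$. The main obstacle is the ``triangularity'' claim that $e_\mu * S^\lambda$ stays weakly below $\mu$ in the order $\leq$ when $\mu < \lambda$: this is the combinatorial heart of the argument and the reason the ordering is defined the way it is, and it requires a careful analysis via the splitting formula of how internal multiplication by $S^\lambda$ interacts with the length and lexicographic statistics of compositions.
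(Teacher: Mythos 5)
Your inductive skeleton is reasonable in outline (the base case $S_1^{\,n}*S_1^{\,n}=n!\,S_1^{\,n}$ is correct, the telescoping argument for $\sum_\lambda e_\lambda=S_n$ is correct, and $e_\lambda*e_\mu=\frac1{m_\lambda}S^\lambda*(f_\lambda*e_\mu)=0$ for $\mu<\lambda$ does follow from associativity and the inductive hypothesis), but the proof has a genuine gap exactly where you flag ``the combinatorial heart'': the claim that $e_\mu * S^\lambda$ lies in the span of the $e_\nu$ with $\nu\le\mu$ is both unproved and, as stated, too strong. What one can actually show is that $e_\mu * S^\lambda$ lies in $\bigoplus_{\nu\le\mu}\vect\<\zeta^K : K\dwna=\nu\>$, and each summand $\vect\<\zeta^K : K\dwna=\nu\>$ is in general strictly larger than $\C e_\nu$ (it is the whole projective module $P_\nu$); products such as $\zeta_3*\zeta^{21}\propto\zeta^{21}-\zeta^{12}$ already leave the line spanned by $e_{21}$. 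So even after correcting the statement you would still need a second fact, namely that every $\zeta^K$ with $K\dwna<\lambda$ is annihilated on the right by $*f_\lambda$, and neither fact is accessible without an explicit handle on the internal products involved. The same issue infects your treatment of idempotency: ``the lower terms get killed after applying $*f_\lambda$'' is again the same unproved annihilation statement. Since these are precisely the points that carry all the content, the argument as written does not constitute a proof.

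For comparison: the paper does not reprove this theorem directly (it is quoted from \cite{Sal}), but it supplies exactly the missing machinery by a different route. Theorem~\ref{SymIdempotentTheorem} identifies $e_\lambda=\frac1{m_\lambda}\zeta_{\lambda_1}\cdots\zeta_{\lambda_k}$ in terms of the Zassenhaus idempotents, using Lemma~\ref{lem-SetZ} (which computes $S^I*\zeta^J$ via the splitting formula and primitivity of the $\zeta_n$); orthogonality, idempotency and completeness then drop out of the closed identities $\zeta^I*\zeta^J=0$ for $\ell(J)<\ell(I)$, $\zeta^I*\zeta^J=m_I\zeta^I$ for $J\in\SG(I)$, and $S_n=\sum_\lambda\frac1{m_\lambda}\zeta^\lambda$. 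That explicit formula is what replaces your triangularity lemma; if you want to complete your induction without it, you would essentially have to reprove Lemma~\ref{lem-SetZ} and the lemma of Section~3.2, or else follow Saliola's original geometric/filtration argument.
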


Following \cite{NCSF2}, define the (left) Zassenhaus idempotents $\zeta_n$ by 
the generating series 
\begin{equation}
\sigma_1 =: \prod_{k\geq1}^{\leftarrow} e^{\zeta_k}
         = \cdots e^{\zeta_3} e^{\zeta_2} e^{\zeta_1}.
\end{equation}
For example,
\begin{equation}
S_1 = \zeta_1\ ,\quad S_2 = \zeta_2 + \frac12 \zeta_1^2\ , \quad
S_3 = \zeta_3 + \zeta_2\zeta1 + \frac16 \zeta_1^3,
\end{equation}
\begin{equation}
S_4 = \zeta_4 + \zeta_3\zeta_1 + \frac12 \zeta_2^2
    + \frac12 \zeta_2\zeta_1^2 + \frac1{24} \zeta_1^4,
\end{equation}
\begin{equation}
S_5 = 
  \zeta_{5}
+ \zeta_4\zeta_1
+ \zeta_3\zeta_2
+ \frac{1}{2} \zeta_3\zeta_1^2
+ \frac{1}{2} \zeta_2^2\zeta_1
+ \frac{1}{6} \zeta_2\zeta_1^3
+ \frac{1}{120} \zeta_1^5,
\end{equation}
\begin{align}
S_6 = 
&\
  \zeta_{6}
+ \zeta_5\zeta_1
+ \zeta_4\zeta_2
+ \frac{1}{2}\zeta_4\zeta_1^2
+ \frac{1}{2}\zeta_3^2
+ \zeta_3\zeta_2\zeta_1
\\ &
+ \frac{1}{6}\zeta_3\zeta_1^3
+ \frac{1}{6}\zeta_2^3
+ \frac{1}{4}\zeta_2^2\zeta_1^2
+ \frac{1}{24}\zeta_2\zeta_1^4
+ \frac{1}{720}\zeta_1^6 
\notag
\end{align}
so that
\begin{equation}
\zeta_1 = S_1\ ,\quad \zeta_2 = S_2 -\frac12 S^{11}\ , \quad
\zeta_3 = S_3 - S^{21} + \frac13 S^{111},
\end{equation}
\begin{equation}
\zeta_4 = S_4 - S^{31} - \frac12 S^{22} + \frac34 S^{211} + \frac14 S^{112}
      - \frac14 S^{1111},
\end{equation}
\begin{equation}
\zeta_5 = S_5 - S^{41} - S^{32} + S^{311} + S^{212} - \frac23 S^{2111}
           - \frac13 S^{1112} + \frac15 S^{11111},
\end{equation}
\begin{align}
\zeta_6 
= &\
  S_6
- S^{51}
- S^{42}
+ S^{411}
- \frac12 S^{33}
+ \frac12 S^{321}
+ S^{312}
- \frac56 S^{3111}
\\\notag &
+ \frac13 S^{222}
- \frac16 S^{2211}
+ \frac12 S^{213}
- \frac12 S^{2121}
- \frac23 S^{2112}
+ \frac{13}{24} S^{21111}
\\\notag & 
- \frac16 S^{1122}
+ \frac{1}{12} S^{11211}
- \frac16 S^{1113}
+ \frac16 S^{11121}
+ \frac{5}{24} S^{11112}
- \frac16 S^{111111}.
\end{align}

Note that in particular,
\begin{equation}
\label{SenZ}
S_n = \sum_{\lambda\vdash n}\ \ \frac{1}{m_\lambda}
          \zeta_{\lambda_1} \zeta_{\lambda_2} \dots \zeta_{\lambda_r}.
\end{equation}

For a composition $I=(i_1,\ldots,i_r)$, define as usual
$\zeta^I:=\zeta_{i_1}\dots \zeta_{i_r}$.
Since $\zeta_n\equiv S_n$ modulo smaller terms in the refinement order on
compositions, $\zeta^I\equiv S^I$ modulo smaller terms.
So the $\zeta^I$ family is unitriangular on the basis $S^J$, so it is a
basis of $\NCSF$. 

In the sequel, we shall need a condition for a product $S^I*\zeta^J$ to be
zero.
\begin{lemma}
\label{lem-SetZ}
Let $I$ and $J$ be two compositions of $n$. Then,
\begin{align}
S^I * \zeta^J
= 
\begin{cases}
0, & \text{if } J\dwna \not\prec_{p} I\dwna, \\
m_{I\dwna} \zeta^I & \text{if } J\dwna = I\dwna, \\
\sum_{K\dwna=J\dwna} c_{IJ}^K \zeta^K & otherwise, \\
\end{cases}
\end{align}
where $c_{IJ}^K$ is the number of ways of unshuffling $J$ into $p=\ell(I)$
subwords such that $J^{(l)}$ has sum $i_l$ and whose concatenation 
$J^{(1)} J^{(2)}\dots J^{(p)}$ is $K$.
\end{lemma}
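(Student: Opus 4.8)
The plan is to compute $S^I * \zeta^J$ by expanding $\zeta^J$ in terms of the $S^K$ basis and using the splitting formula \eqref{splitting}, but it is cleaner to work the other way: expand $S^I$ via the splitting formula directly and track what happens to $\zeta^J$ under the iterated coproduct. Recall that the internal product satisfies $(f_1\cdots f_r)*g = \mu_r\cdot(f_1\otimes\cdots\otimes f_r)*_r\Delta^r g$. With $f_l = S_{i_l}$ we get $\Delta^r\zeta^J$, so the first step is to understand the coproduct of a Zassenhaus product $\zeta^J = \zeta_{j_1}\cdots\zeta_{j_s}$. Since each $\zeta_k$ is a Lie element, it is primitive: $\Delta\zeta_k = \zeta_k\otimes 1 + 1\otimes\zeta_k$. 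Hence $\Delta^r\zeta^J$ is computed by the usual rule for products of primitives — each factor $\zeta_{j_m}$ is sent entirely into one of the $r$ tensor slots, in all possible ways, preserving left-to-right order within each slot. This is exactly the combinatorics of unshuffling the word $J=(j_1,\dots,j_s)$ into $r$ (possibly empty) subwords $J^{(1)},\dots,J^{(r)}$.

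The second step is to apply $(S_{i_1}\otimes\cdots\otimes S_{i_r})*_r(-)$ slot by slot. In slot $l$ we face $S_{i_l}*\zeta^{J^{(l)}}$, a product of the same shape but of strictly smaller degree, so we are set up for an induction on $n$. The base case is immediate. For the inductive step, note first that $S_{i_l}*\zeta^{J^{(l)}}$ lives in degree $i_l$, so if $|J^{(l)}| \neq i_l$ the term is zero; thus only unshufflings with $|J^{(l)}|=i_l$ for every $l$ survive, which forces $J\dwna\prec_p I\dwna$ — if no such unshuffling exists the whole product vanishes, giving the first case. When such unshufflings exist, by the inductive hypothesis (the single-row case $S_m*\zeta^L$) we have $S_{i_l}*\zeta^{J^{(l)}}$ equal to a sum of $\zeta$-products of content $J^{(l)}$, and in fact — this is the key point to pin down — for a one-part $I$ the surviving terms must have $J^{(l)}$ itself as composition, i.e. $S_m*\zeta^L = \zeta^L$ when $|L|=m$. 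This follows because $\zeta^L\equiv S^L$ modulo finer compositions, $S_m$ is the unit-type element in degree $m$ for the internal product restricted appropriately (more precisely, $S_m*\zeta^L=\zeta^L$ because $\zeta^L$ is already "collected" in one slot and $S_m$ acts as identity on degree-$m$ noncommutative symmetric functions under $*$), and unitriangularity then removes any ambiguity. After multiplying the slots back together with $\mu_r$, we obtain $\sum \zeta^{J^{(1)}J^{(2)}\cdots J^{(r)}}$, summed over the admissible unshufflings, which is precisely $\sum_K c_{IJ}^K\zeta^K$ with $c_{IJ}^K$ counting unshufflings whose concatenation is $K$; and every such $K$ satisfies $K\dwna = J\dwna$, giving the third case.

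The third step is to extract the diagonal case $J\dwna = I\dwna$. If the multiset of parts of $J$ equals that of $I$, then an admissible unshuffling must place exactly one part of $J$ into each slot, and that part must equal $i_l$; the number of such assignments is $\prod_i m_i(I\dwna)! = m_{I\dwna}$, and each produces the concatenation $K=I$ (as a composition, reading the slots in order gives back $I$ since slot $l$ contributes the single part $i_l$). This yields $S^I*\zeta^J = m_{I\dwna}\,\zeta^I$, the middle case. The main obstacle I expect is the rigorous justification that $S_m * \zeta^L = \zeta^L$ for $|L|=m$ — equivalently, that in the one-row internal product no strictly-finer $\zeta$-terms are created — which is what makes the slot-by-slot computation collapse to a clean unshuffle count; once that lemma-within-the-lemma is secured (via primitivity of the $\zeta_k$ together with the unitriangular change of basis between $\zeta^I$ and $S^I$), the rest is bookkeeping with unshuffles and the splitting formula.
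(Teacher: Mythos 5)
Your proof is correct and follows essentially the same route as the paper's: apply the splitting formula, use primitivity of the $\zeta_k$ to reduce $\Delta^p\zeta^J$ to unshufflings of $J$ into $p$ subwords, kill the terms with $|J^{(l)}|\neq i_l$ by homogeneity, and count what remains. The only remark worth making is that the step you single out as the main obstacle, $S_m*\zeta^L=\zeta^L$ for $|L|=m$, needs no unitriangularity argument at all --- it is immediate from the fact that $\sigma_1$ is the neutral element for $*$, so $S_m$ is the identity of $(\Sym_m,*)$.
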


\Proof
Since the Zassenhaus idempotents $\zeta_m$ are primitive, we have, thanks to
the splitting formula~(\ref{splitting}),
\begin{align}
\label{SZ}
S^I * \zeta^J
= 
\sum_{J^{(1)}, \cdots, J^{(p)}}
\left(S_{i_1} * \zeta^{J^{(1)}}\right) 
\cdots 
\left(S_{i_p} * \zeta^{J^{(p)}}\right)
\end{align}
where the sum ranges over all possible ways of decomposing $J$ into $p$
(possibly empty) subwords.

Since $S_{i_j} * \zeta^{J^{(j)}} = 0$ if $J^{(j)}$ is not a composition of
$i_j$, it follows that $S^I * \zeta^J = 0$ if $J\dwna \not\prec_{p} I\dwna$.
Moreover, if $J\dwna = I\dwna$, then
\begin{align}
S^I * \zeta^J
&=
\sum_{\sigma \in \SG_p}
\left(S_{i_1} * \zeta_{J_{\sigma(1)}}\right) 
\cdots 
\left(S_{i_p} * \zeta_{J_{\sigma(p)}}\right) 
= 
m_I \zeta^I.
\end{align}
If $J\dwna \prec_{p} I\dwna$ and $J\dwna \not= I\dwna$, then a term in the
r.h.s. of~(\ref{SZ}) is nonzero iff all $J^{(\ell)}$ are compositions of
$i_\ell$. In that case, we have
\begin{align}
S^I * \zeta^J
= \sum_{J^{(1)}, \dots, J^{(p)}} \zeta^{J^{(1)}} \cdots \zeta^{J^{(p)}}
= \sum_{J^{(1)}, \dots, J^{(p)}} \zeta^{J^{(1)}J^{(2)}\cdots J^{(p)}},
\end{align}
whence the last case.
\qed

\begin{theorem}
\label{SymIdempotentTheorem}
For all partitions $\lambda=(\lambda_1, \lambda_2, \dots, \lambda_k)$,
\begin{equation}
\label{eEpreformed}
e_\lambda = \frac1{m_\lambda} \ \zeta_{\lambda_1} \cdots \zeta_{\lambda_k}.
\end{equation}
\end{theorem}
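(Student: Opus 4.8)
The plan is to prove \eqref{eEpreformed} by induction on $\lambda$ with respect to the order $\leq$ on partitions of $n$, matching the recursive definition \eqref{eRecursion}. The base case is immediate since $e_{1^n}=\frac{1}{n!}S_1^n$ and $\zeta_1=S_1$, $m_{1^n}=n!$. For the inductive step, I would substitute the induction hypothesis $e_\mu=\frac{1}{m_\mu}\zeta^{\mu}$ for all $\mu<\lambda$ into \eqref{eRecursion}, and also expand $S_n$ using \eqref{SenZ}, giving
\begin{equation}
e_\lambda=\frac{1}{m_\lambda}S^\lambda*\left(\sum_{\nu\vdash n}\frac{1}{m_\nu}\zeta^\nu-\sum_{\mu<\lambda}\frac{1}{m_\mu}\zeta^\mu\right)=\frac{1}{m_\lambda}S^\lambda*\left(\sum_{\nu\not<\lambda}\frac{1}{m_\nu}\zeta^\nu\right).
\end{equation}
Here each $\zeta^\nu$ appearing is indexed by a partition $\nu$ (a nonincreasing composition), so $\nu\dwna=\nu$.

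The core of the argument is then to evaluate $S^\lambda*\zeta^\nu$ for each partition $\nu$ with $\nu\not<\lambda$, using Lemma~\ref{lem-SetZ} with $I=\lambda$, $J=\nu$. By the lemma, this product is nonzero only if $\nu=\nu\dwna\prec_p\lambda\dwna=\lambda$, i.e. only if $\lambda$ refines $\nu$ (each part of $\nu$ is a sum of parts of $\lambda$). I need to check that the combination of the two conditions "$\nu\not<\lambda$" and "$\lambda\prec_p\nu$" (i.e. $\nu$ coarser than $\lambda$) forces $\nu=\lambda$. This is where the specific order $\leq$ does its work: if $\lambda\prec_p\nu$ and $\nu\neq\lambda$, then $\nu$ has strictly fewer parts than $\lambda$ (coarsening strictly decreases length), hence $\nu<\lambda$ in the length-first order — contradicting $\nu\not<\lambda$. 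So the only surviving term is $\nu=\lambda$, and by the middle case of Lemma~\ref{lem-SetZ}, $S^\lambda*\zeta^\lambda=m_{\lambda\dwna}\zeta^\lambda=m_\lambda\zeta^\lambda$. Therefore $e_\lambda=\frac{1}{m_\lambda}\cdot\frac{1}{m_\lambda}\cdot m_\lambda\zeta^\lambda=\frac{1}{m_\lambda}\zeta^\lambda$, completing the induction.

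The step I expect to be the main (though modest) obstacle is the order-theoretic bookkeeping in the previous paragraph: one must be careful that "$\mu<\lambda$" ranges over \emph{partitions} while Lemma~\ref{lem-SetZ} is phrased for compositions, and that the complement "$\nu\not<\lambda$" among partitions of $n$ is exactly what is needed. The clean way to phrase it is: $S^\lambda*\zeta^\mu=0$ whenever $\mu\not<\lambda$ and $\mu\neq\lambda$, because for such $\mu$ we have $\mu\dwna=\mu$ and either $\lambda$ does not refine $\mu$ — then the first case of Lemma~\ref{lem-SetZ} gives $0$ — or $\lambda$ does strictly refine $\mu$, forcing $\ell(\mu)<\ell(\lambda)$ and hence $\mu<\lambda$, a contradiction. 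Once this vanishing is established, the computation collapses to the single diagonal term and the identity follows; as a byproduct one re-derives that the $e_\lambda$ are idempotent and orthogonal, since the $\zeta^\lambda$ form a basis and $\zeta^\lambda*\zeta^\mu$ can be computed the same way.
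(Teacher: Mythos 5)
Your overall strategy is exactly the paper's: expand $S_n$ via \eqref{SenZ}, substitute the inductive hypothesis, and use Lemma~\ref{lem-SetZ} to show that $S^\lambda*\zeta^\nu$ vanishes for every partition $\nu$ with $\nu\neq\lambda$ and $\nu\not<\lambda$, so that only the diagonal term survives and contributes $\frac{1}{m_\lambda}m_\lambda\zeta^\lambda$. The skeleton is sound and the conclusion of each macro-step is correct.

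However, the order-theoretic step --- which you yourself single out as the crux --- is justified by two statements that are each false and happen to cancel. First, Lemma~\ref{lem-SetZ} with $I=\lambda$, $J=\nu$ says the product can be nonzero only if $\nu\prec_{p}\lambda$, i.e.\ only if $\nu$ is \emph{finer} than $\lambda$ (each part of $\lambda$ is a sum of parts of $\nu$); you gloss this as ``$\lambda$ refines $\nu$ (each part of $\nu$ is a sum of parts of $\lambda$)'', which is the opposite relation, and you then work with $\lambda\prec_{p}\nu$. Second, you claim that a partition with strictly fewer parts than $\lambda$ satisfies $\nu<\lambda$; but the paper sorts partitions by \emph{decreasing} length ($1^n$ is the minimum of $\leq$), so fewer parts means \emph{greater}, not smaller. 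The correct chain is: nonvanishing forces $\nu$ finer than $\lambda$; if moreover $\nu\neq\lambda$, then $\ell(\nu)>\ell(\lambda)$, hence $\nu<\lambda$ because longer partitions come first --- contradicting $\nu\not<\lambda$. Since your two reversals compensate, the vanishing you need is in fact true and the argument goes through once these two sentences are corrected; as written, though, the justification of the key step is wrong, and a reader applying your (reversed) reading of the lemma would conclude that the terms $\nu>\lambda$, rather than the terms $\nu<\lambda$, are the ones killed by the order --- which would break the induction.
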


\Proof
Let $e'_\lambda$ be the right-hand side of (\ref{eEpreformed}). We
will show that these elements satisfy the same induction as the
$e_\lambda$ (Equation~(\ref{eRecursion})).

From Lemma~\ref{lem-SetZ}, we have
\begin{align}
S^\lambda * e'_\lambda = m_\lambda e'_\lambda.
\end{align}
Now, using (\ref{SenZ}), we get
\begin{align}
m_\lambda e'_\lambda 
= S^\lambda * \left( S_n - \sum_{\mu\neq\lambda} e'_\mu \right)
= S^\lambda * \left( S_n - \sum_{\mu<\lambda} e'_\mu \right),
\end{align}
where the last equality follows again from Lemma~\ref{lem-SetZ}.
Hence, $e_\lambda = e'_\lambda$.
\qed

Note that, thanks to Lemma~\ref{lem-SetZ}, the induction formula for
$e_\lambda$ simplifies to
\begin{equation}
e_\lambda = \frac{1}{m_\lambda} S^\lambda 
 * \left(S_n - \sum_{\mu\prec_{p}\lambda} e_\mu\right).
\end{equation}

\subsection{A basis of idempotents}

As with any sequence of Lie idempotents, we can construct an idempotent
basis of $\Sym_n$ from the $\zeta_n$. Here, the principal idempotents
$e_\lambda$ are members of the basis, which leads to a simpler derivation
of the representation theory. 

We start with a basic lemma, easily derived from the splitting formula
(compare \cite[Lemma 3.10]{NCSF2}).
Recall that the radical of $(\Sym_n,*)$
is ${\mathcal R}_n = {\mathcal R}\cap\Sym_n$, where ${\mathcal R}$ is the
kernel of the commutative image $\Sym\rightarrow Sym$. 

\begin{lemma}
Denote by $\SG(J)$ the set of distinct rearrangements of a composition $J$.
Let $I=(i_1,\ldots,i_r)$ and $J=(j_1,\ldots,j_s)$ be two compositions of $n$.
Then,

\smallskip
(i) if $\ell(J)<\ell(I)$ then $\zeta^I*\zeta^J=0$.

\smallskip
(ii) if $\ell(J)>\ell(I)$ then
$\zeta^I*\zeta^J \in \vect\<\zeta^K\, : \, K\in\SG (J)\>\cap{\mathcal R}$.
More precisely,
\begin{equation}
\zeta^I*\zeta^J
 = \sum_{\gf{\scriptstyle J_1,\ldots, J_r}{\scriptstyle |J_k|=i_k}}
\< J\, ,\, J_1\shuffle \cdots \shuffle J_r \>\Gamma_{J_1}\cdots\Gamma_{J_r}
\end{equation}
where for a composition $K$ of $k$, $\Gamma_K:=\zeta_k*\zeta^K$.

\smallskip
(iii) if $\ell(J)=\ell(I)$, then $\zeta^I*\zeta^J\not= 0$
only for $J\in\SG (I)$, in which case $\zeta^I*\zeta^J=m_I\, \zeta^I$.
\end{lemma}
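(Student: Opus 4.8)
The entire lemma is a direct application of the splitting formula~(\ref{splitting}) together with the fact that the $\zeta_m$ are primitive (so that $\Delta^r\zeta_m=\sum_{a+b=m}\zeta_a\otimes\zeta_b$, with the convention $\zeta_0=1$), exactly as in the proof of Lemma~\ref{lem-SetZ}. The plan is to expand $\zeta^I*\zeta^J=(\zeta_{i_1}\cdots\zeta_{i_r})*\zeta^J$ via~(\ref{splitting}) into a sum over all ways of decomposing $J$ into $r$ (possibly empty) subwords $J^{(1)},\dots,J^{(r)}$, obtaining $\sum (\zeta_{i_1}*\zeta^{J^{(1)}})\cdots(\zeta_{i_r}*\zeta^{J^{(r)}})$, and then to analyze each factor $\zeta_{i_k}*\zeta^{J^{(k)}}$. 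The key sublemma one needs is that $\zeta_m*\zeta^K=0$ unless $K$ is a composition of $m$; this follows because $\zeta_m$ is primitive and homogeneous of degree $m$, so $\zeta_m*F$ only pairs $\Delta^{\ell(K)}\zeta_m$-type terms of the right total degree, and in fact $\zeta_m*F$ kills any $F$ whose degree differs from $m$, while for $|K|=m$ one has $\zeta_m*\zeta^K=\Gamma_K$ by definition.

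Granting that, the three cases fall out by a counting of lengths. First I would treat~(i): if $\ell(J)<\ell(I)=r$, then in every decomposition of $J$ into $r$ subwords at least one $J^{(k)}$ is empty, hence $\zeta_{i_k}*\zeta^{\emptyset}=\zeta_{i_k}*1$; but $1$ has degree $0\neq i_k$, so this factor vanishes and the whole product is $0$. Actually one must be slightly more careful: an empty subword could in principle be ``absorbed'' — so the cleaner argument is that a nonzero term requires each $J^{(k)}$ to be a nonempty composition of $i_k$, which forces $\ell(J)=\sum_k\ell(J^{(k)})\geq r=\ell(I)$; this simultaneously disposes of~(i) and shows that in cases~(ii) and~(iii) only decompositions with each $|J^{(k)}|=i_k$ contribute. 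For~(iii), $\ell(J)=\ell(I)=r$ forces each $J^{(k)}$ to be a single part equal to $i_k$, so $J$ must be a rearrangement of $I$, the subwords are determined up to the $m_I$ ways of distributing equal parts, and each surviving term is $\zeta_{i_1}*\zeta_{i_1}\cdots$; since $\zeta_a*\zeta_a=\zeta_a$ (Zassenhaus idempotents are idempotent for $*$) each factor is $\zeta_{i_k}$, giving $m_I\,\zeta^I$. For~(ii), $\ell(J)>r$: the surviving decompositions are exactly those where $J$ is split, \emph{as a word}, into $r$ consecutive-in-value but order-respecting pieces; rewriting the sum over ordered set-partitions of the letters of $J$ as a shuffle coefficient $\langle J,\,J_1\shuffle\cdots\shuffle J_r\rangle$ gives the displayed formula with $\Gamma_{J_k}=\zeta_{i_k}*\zeta^{J_k}$. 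Membership in $\vect\langle\zeta^K:K\in\SG(J)\rangle$ is immediate since each $\Gamma_{J_k}$ expands (by Lemma~\ref{lem-SetZ}, last case, applied with $I=(i_k)$) on $\zeta^L$ with $L\dwna=J_k\dwna$, and concatenating gives $K\dwna=J\dwna$; membership in $\mathcal R_n$ follows because the commutative image of $\zeta_m$ is $0$ for $m\geq 2$ and $\ell(J)>\ell(I)\geq 1$ forces at least one part of some $J_k$ — more precisely the commutative image of $\Gamma_{J_k}$ vanishes whenever $\ell(J_k)\geq 2$, and since $\sum\ell(J_k)=\ell(J)>r$ at least one $J_k$ has length $\geq 2$, killing the commutative image of the product.

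The main obstacle is bookkeeping rather than conceptual: namely, getting the shuffle-coefficient reformulation in~(ii) exactly right, including the precise convention for $\langle\,,\,\rangle$ (dual pairing of words) and checking that summing $(\zeta_{i_1}*\zeta^{J^{(1)}})\cdots(\zeta_{i_r}*\zeta^{J^{(r)}})$ over \emph{all} decompositions — not just those into compositions of the right sizes — introduces no spurious terms, which is precisely the content of the vanishing sublemma for $\zeta_m*\zeta^K$. I would isolate that sublemma first and cite it in all three cases. A secondary subtlety is the identity $\zeta_a*\zeta_a=\zeta_a$ used in~(iii): this is standard (the Zassenhaus $\zeta_a$ is a Lie idempotent, hence idempotent for the internal product on $\Sym_a$), and is implicitly already used in Lemma~\ref{lem-SetZ} (the case $J\dwna=I\dwna$), so I would simply invoke it. Everything else is the same splitting-formula computation as in the proof of Lemma~\ref{lem-SetZ}, now run with $\zeta$'s in place of $S$'s on the left.
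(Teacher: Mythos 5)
Your argument is the one the paper intends: the lemma is stated there without a written proof, with only the indication that it is ``easily derived from the splitting formula,'' and your expansion of $\zeta^I*\zeta^J$ via \eqref{splitting} using primitivity of the $\zeta_m$ --- with the degree count forcing every $J^{(k)}$ to be a nonempty composition of $i_k$, which simultaneously yields (i), the $m_I\,\zeta^I$ count in (iii) (via $\zeta_a*\zeta_a=\zeta_a$, valid because $\zeta_a$ is primitive with coefficient $1$ on $S_a$), and the shuffle-coefficient formula in (ii) --- is exactly that derivation.

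One clause is false, although your own refinement repairs it. The commutative image of $\zeta_m$ is not $0$ for $m\geq 2$: applying the commutative image to $\sigma_1=\cdots e^{\zeta_2}e^{\zeta_1}$ and comparing with $\sum_n h_n=\exp\bigl(\sum_k p_k/k\bigr)$ gives $\zeta_m\mapsto p_m/m$. The statement you actually need, and correctly formulate in your ``more precisely'' clause, is that the commutative image of $\Gamma_{J_k}=\zeta_{i_k}*\zeta^{J_k}$ vanishes as soon as $\ell(J_k)\geq 2$; this holds because the commutative image is a homomorphism for the internal product, $\zeta^{J_k}$ maps to a multiple of $p_{J_k\dwna}$, and $p_k*p_\mu=0$ unless $\mu=(k)$. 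Since $\sum_k\ell(J_k)=\ell(J)>r$ forces some $\ell(J_k)\geq 2$, every term of the sum in (ii) indeed lies in ${\mathcal R}$. Strike the false clause about $\zeta_m$ and keep only this argument.
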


Note that the $\Gamma_K$ are in the primitive Lie algebra. This follows from
the *-multiplicativity of the coproduct: $\Delta(f*g)=\Delta(f)*\Delta(g)$,
see~\cite[Prop. 5.5]{NCSF1}.

\begin{corollary}
The elements
\begin{equation}
e_I=\frac1{m_I}\zeta^I\,,\quad I\vDash n\,,
\end{equation}
are all idempotents and form a basis of $\Sym_n$. This basis contains
in particular the principal idempotents $e_\lambda$.
\end{corollary}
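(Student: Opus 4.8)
The plan is to show that the family $(e_I)_{I\vDash n}$ defined by $e_I=\frac{1}{m_I}\zeta^I$ consists of idempotents, that it spans $\Sym_n$, and that its cardinality equals $\dim\Sym_n$, so that spanning forces linear independence. For the idempotency, I would invoke part (iii) of the preceding Lemma with $J=I$: since $I\in\SG(I)$ and $\ell(I)=\ell(I)$, we get $\zeta^I*\zeta^I=m_I\,\zeta^I$, hence $e_I*e_I=\frac{1}{m_I^2}\zeta^I*\zeta^I=\frac{1}{m_I}\zeta^I=e_I$. This is immediate and is not the main difficulty.

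For the basis claim, I would argue triangularity. Recall from the discussion just before Lemma~\ref{lem-SetZ} that $\zeta_m\equiv S_m$ modulo terms that are strictly smaller in the refinement order on compositions, and hence $\zeta^I\equiv S^I$ modulo terms $S^J$ with $J$ strictly finer than $I$. Thus the transition matrix from $(\zeta^I)_{I\vDash n}$ to $(S^J)_{J\vDash n}$ is unitriangular with respect to the refinement order on compositions of $n$, so $(\zeta^I)_{I\vDash n}$ is a basis of $\Sym_n$; rescaling each element by the nonzero scalar $1/m_I$ preserves the basis property, giving that $(e_I)_{I\vDash n}$ is a basis. (Alternatively, one notes $\#\{I\vDash n\}=2^{n-1}=\dim\Sym_n$ and that spanning follows from the same unitriangularity, which yields linear independence for free.)

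Finally, to see that the principal idempotents $e_\lambda$ of Theorem~\ref{SymIdempotentTheorem} occur among the $e_I$, observe that when $I=\lambda$ is already a partition (a nonincreasing composition) we have $m_I=m_\lambda$ and $\zeta^I=\zeta_{\lambda_1}\cdots\zeta_{\lambda_k}$, so $e_I=\frac{1}{m_\lambda}\zeta_{\lambda_1}\cdots\zeta_{\lambda_k}=e_\lambda$ by equation~(\ref{eEpreformed}). Thus every $e_\lambda$ is literally one of the basis elements $e_I$.

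I do not anticipate a serious obstacle here: every ingredient (idempotency from Lemma (iii), unitriangularity from the remark on $\zeta_n\equiv S_n$, and the identification $e_\lambda=e_\lambda$ from Theorem~\ref{SymIdempotentTheorem}) has already been established in the excerpt. The only point requiring a word of care is making explicit that the refinement-order unitriangularity of $(\zeta^I)$ on $(S^J)$ is genuinely a square unitriangular matrix over all compositions of $n$ — so that one really gets a basis and not merely a spanning set or an independent set — but this is routine once the partial order is fixed.
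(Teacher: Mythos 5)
Your proof is correct and follows exactly the route the paper intends: idempotency from part (iii) of the preceding Lemma applied with $J=I$, the basis property from the unitriangularity of $(\zeta^I)$ over $(S^J)$ already noted in the text, and the identification $e_\lambda=e_I$ for $I=\lambda$ from Theorem~\ref{SymIdempotentTheorem}. The paper leaves the proof of this corollary implicit, and your write-up supplies precisely the intended details.
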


\subsection{Cartan invariants}

By (iii) of the lemma, the indecomposable projective module
$P_\lambda=\Sym_n*e_\lambda$ contains the $e_I$ for $I\in\SG(\lambda)$. For
$I\not\in\SG(\lambda)$, (i) and (ii) imply that $e_I*e_\lambda$ is in
$\vect\<\zeta^K\, : \, K\in\SG (\lambda)\>$. Hence, this space coincides with
$P_\lambda$. So, we get immediately an explicit decomposition
\begin{equation}
\Sym_n=\bigoplus_{\lambda\vdash n}P_\lambda\,,\qquad
P_\lambda=\bigoplus_{I\in\SG(\lambda)}\C e_I\,.
\end{equation}
The Cartan invariants
\begin{equation}
c_{\lambda\mu}=\dim\,( e_\mu*\Sym_n*e_\lambda)
\end{equation}
are also easily obtained. The above space is spanned by the
\begin{equation}
e_\mu*e_I* e_\lambda = e_\mu* e_I\,,\quad I\in\SG(\lambda)\,.
\end{equation}
From (ii) of the lemma, this is the dimension of the space 
$[S^\mu(L)]_\lambda$, spanned by all symmetrized products of Lie polynomials
of degrees $\mu_1,\mu_2,\ldots$ formed from $\zeta_{i_1},\zeta_{i_2},\ldots$,
hence giving back the classical result of Garsia-Reutenauer \cite{GR}.

\subsection{Quiver and $q$-Cartan invariants (Loewy series)}

Still relying upon point (ii) of the lemma, we see that $c_{\lambda\mu}=0$ if
$\lambda$ is not finer than (or equal to) $\mu$, and that if $\mu$ is obtained
from $\lambda$ by adding up two parts $\lambda_i,\lambda_j$, $e_\mu* e_I=0$ if
$\lambda_i=\lambda_j$ and is a nonzero element of the radical otherwise. 

In~\cite{BL}, it is shown that the powers of the radical for the internal
product coincide with the lower central series of $\Sym$ for the external
product:
\begin{equation}
{\mathcal R}^{*j} =\gamma^j(\Sym)
\end{equation}
where $\gamma^j(\Sym)$ is the ideal generated by the commutators
$[\Sym,\gamma^{j-1}(\Sym)]$.
Hence, for $\lambda$ finer than $\mu$, $e_\mu* e_I$ is nonzero
modulo ${\mathcal R}^{*2}$ iff $\mu$ is obtained from $\lambda$
by summing two distinct parts. And more generally, 
$e_\mu* e_I$ is in ${\mathcal R}^{*k}$ and nonzero modulo
${\mathcal R}^{*k+1}$ iff $\ell(\lambda)-\ell(\mu)=k$.

Summarizing, we have

\begin{theorem}\cite{BL, Sal}
(i) In the quiver of $\Sym_n$, there is an arrow
$\lambda\rightarrow\mu$ iff $\mu$ is obtained from
$\lambda$ by adding two distinct parts.\\
(ii) The $q$-Cartan invariants are given by
\begin{equation}
c_{\lambda\mu}(q)=q^{\ell(\lambda)-\ell(\mu)}
\end{equation}
if  $\lambda$ is finer than (or equal to) $\mu$, and $c_{\lambda\mu}(q)=0$
otherwise.
\end{theorem}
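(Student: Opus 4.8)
The plan is to read off both statements from point (ii) of the preceding lemma together with the identification ${\mathcal R}^{*j}=\gamma^j(\Sym)$ of \cite{BL}, which the excerpt has already put in place. The indecomposable projectives are $P_\lambda=\bigoplus_{I\in\SG(\lambda)}\C e_I$, so the quiver arrows and the $q$-Cartan matrix are governed entirely by how $e_\mu*e_I$ behaves for $I\in\SG(\lambda)$ modulo successive powers of the radical.

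First I would establish the degree count. Fix $\lambda$ finer than $\mu$ and $I\in\SG(\lambda)$. By (ii) of the lemma, $e_\mu*e_I$ is a sum of symmetrized products $\Gamma_{J_1}\cdots\Gamma_{J_r}$ where $r=\ell(\mu)$ and each $J_k$ is a composition whose parts are drawn from the parts of $\lambda$ and which sums to $\mu_k$; since each $\Gamma_{J_k}=\zeta_{\mu_k}*\zeta^{J_k}$ is a Lie element, a product of $\ell(\mu)$ of them has commutator-weight at least\ldots here is the key point: a product of $c$ primitive elements lies in $\gamma^{?}$ only through its lowest-weight nonzero component, and the relevant claim (already asserted in the paragraph before the theorem) is that $e_\mu*e_I\in{\mathcal R}^{*k}\setminus{\mathcal R}^{*k+1}$ precisely when $\ell(\lambda)-\ell(\mu)=k$. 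I would argue this by induction on $k$: the $k=0$ case is (iii) of the lemma ($e_\mu*e_I=m_I e_I$ when $\ell(\mu)=\ell(\lambda)$, i.e.\ $I\in\SG(\mu)$, nonzero and not in the radical); the $k=1$ case is the explicit observation in the excerpt that $e_\mu*e_I$ is a nonzero radical element when $\mu$ merges two \emph{distinct} parts of $\lambda$ (and zero when the merged parts are equal, which is consistent since then $I\notin\SG(\lambda)$ forces nothing — rather, the coefficient $\<J,J_1\shuffle\cdots\shuffle J_r\>$ vanishes). For the inductive step, one factors the merge of $\lambda$ into $\mu$ as a chain of single two-part merges through intermediate compositions, uses associativity of $*$ and the multiplicativity $\Delta(f*g)=\Delta(f)*\Delta(g)$ noted after the lemma to see that composing $k$ single-merge maps lands in ${\mathcal R}^{*k}$, and checks nonvanishing modulo ${\mathcal R}^{*k+1}$ by picking the chain through distinct parts whenever the multiset structure permits it, so that no intermediate coefficient degenerates.

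With the degree count in hand, both parts follow formally. For (i): an arrow $\lambda\to\mu$ in the quiver exists iff $e_\mu*\Sym_n*e_\lambda$ is nonzero modulo ${\mathcal R}^{*2}$, and by the $k=1$ analysis this happens exactly when $\mu$ is obtained from $\lambda$ by adding two \emph{distinct} parts. For (ii): $c_{\lambda\mu}(q)=\sum_{j\ge0}\dim\big((e_\mu*\Sym_n*e_\lambda)\cap{\mathcal R}^{*j}\big)/\big(\cdots\cap{\mathcal R}^{*j+1}\big)\,q^j$; since $e_\mu*\Sym_n*e_\lambda$ is spanned by the single elements $e_\mu*e_I$, $I\in\SG(\lambda)$ — in fact by (iii) it is one-dimensional when nonzero, reproducing $c_{\lambda\mu}\in\{0,1\}$ from the Cartan-invariants subsection — and each such generator sits in exactly one layer ${\mathcal R}^{*k}\setminus{\mathcal R}^{*k+1}$ with $k=\ell(\lambda)-\ell(\mu)$, we get $c_{\lambda\mu}(q)=q^{\ell(\lambda)-\ell(\mu)}$ when $\lambda\preceq_p\mu$ and $0$ otherwise (the latter being (ii) of the lemma: $\zeta^I*\zeta^J=0$ when lengths are incompatible, giving $c_{\lambda\mu}=0$ unless $\lambda$ refines $\mu$).

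The main obstacle is the nonvanishing half of the degree count, i.e.\ showing $e_\mu*e_I\notin{\mathcal R}^{*k+1}$ rather than merely $e_\mu*e_I\in{\mathcal R}^{*k}$. The containment direction is soft — it is immediate from ${\mathcal R}^{*j}=\gamma^j(\Sym)$ and the fact that a product of $\ell(\mu)$ Lie elements realizing a $k$-fold merge is a commutator of weight $\ge k$. The sharpness requires exhibiting, for each admissible pair $\lambda\preceq_p\mu$, an actual chain of single distinct-part merges and verifying that the composite coefficient $\prod\<J^{(t)},\cdots\shuffle\cdots\>$ is nonzero; this is where one must be careful about repeated parts, because merging two equal parts kills the term, so the chain must be chosen to always merge parts that are distinct \emph{at that stage}. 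One can always do so unless $\lambda$ already has $\ell(\lambda)-\ell(\mu)$ forced merges among equal parts — but in that degenerate situation the standard fact (Garsia–Reutenauer, invoked in the excerpt) that the relevant space $[S^\mu(L)]_\lambda$ is nonzero whenever $\lambda\prec_p\mu$ guarantees $c_{\lambda\mu}\ne0$, and then the layer it occupies is pinned down by the containment bound plus the layer count being monotone, forcing it to be exactly $k$. Assembling this case analysis cleanly is the only real work; everything else is bookkeeping with the lemma.
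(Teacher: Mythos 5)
Your route is the same one the paper takes: read everything off from point (ii) of the lemma together with the Blessenohl--Laue identification ${\mathcal R}^{*j}=\gamma^j(\Sym)$, and pin down which radical layer $e_\mu*e_I$ occupies. The containment half of your degree count and the derivation of part (i) from the $k=1$ analysis are sound, and the paper itself only sketches the rest, deferring to \cite{BL,Sal}. However, two of the claims your proof of part (ii) rests on are genuinely false.

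First, $e_\mu*\Sym_n*e_\lambda$ is \emph{not} one-dimensional when nonzero. Point (iii) of the lemma gives $e_\mu*e_I=m_I\,e_I$ only when $\ell(I)=\ell(\mu)$, i.e.\ when $\mu=\lambda\dwna$; it says nothing about the span of the $e_\mu*e_I$ for $I\in\SG(\lambda)$ when $\lambda$ is strictly finer. That dimension is the Garsia--Reutenauer invariant $\dim[S^\mu(L)]_\lambda$ from the preceding subsection, which equals $(k-1)!$ already for $\mu=(n)$ and $\lambda$ with $k$ distinct parts; the paper's own matrix $C_8^{(8)}$, which it states is the $q$-Cartan matrix of $\Sym_8$ up to indexation, displays entries $2q^2$, $3q^3$, $4q^4$. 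So the displayed formula must be read as asserting that $c_{\lambda\mu}(q)$ is the \emph{monomial} $c_{\lambda\mu}\,q^{\ell(\lambda)-\ell(\mu)}$ --- the Cartan invariant is concentrated in a single Loewy layer --- not that its coefficient is $1$. Second, your fallback for nonvanishing, namely that $[S^\mu(L)]_\lambda\neq0$ whenever $\lambda\prec_{p}\mu$, is wrong: already $[S^{(2)}(L)]_{(1,1)}=\C\,[\zeta_1,\zeta_1]=0$, so $c_{(11),(2)}=0$ even though $(1,1)$ refines $(2)$ (consistently with part (i), which forbids merging two equal parts, and with the identity matrix $C_2^{(2)}$). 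Hence the sharpness claim $e_\mu*e_I\notin{\mathcal R}^{*(k+1)}$ cannot hold for every $\lambda$ finer than $\mu$, and the chain of single merges cannot always be routed through distinct parts. The correct assertion, and the one the paper intends, is conditional: \emph{whenever} $e_\mu*\Sym_n*e_\lambda\neq0$, it lies in ${\mathcal R}^{*k}\setminus{\mathcal R}^{*(k+1)}$ with $k=\ell(\lambda)-\ell(\mu)$. Your induction on chains of merges is the right mechanism for the containment, but the nonvanishing statement needs to be reformulated in this conditional form (which is exactly the content the paper delegates to \cite{BL,Sal}) rather than patched by an appeal to a nonvanishing that fails.
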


\section{Descent algebras of type $B$}

\subsection{Preliminary lemmas on $\BSym$}

We begin by showing that in our realization of $\BSym$, Chow's map $\Theta$
(see~\cite[Section 3.4]{Chow}) corresponds to the left internal product by the
reproducing kernel $\sigma_1^\sharp$ of the superization map.
Chow's condition $\Theta(\tilde S_n)=S_n$ translates into the obvious equality
$\sigma_1^\sharp*\sigma_1=\sigma_1^\sharp$, the second condition
$\Theta(S_n(A))=S_n(2A)$ amounts to
\begin{equation}
\sigma_1^\sharp * \sigma_1^\sharp = (\sigma_1^\sharp)^2,
\end{equation}
which is an easy consequence of the splitting formula:
\begin{equation}
\sigma_1^\sharp * \sigma_1^\sharp = (\bar\lambda_1\sigma_1)*\sigma_1^\sharp 
=(\bar\lambda_1* \sigma_1^\sharp)\sigma_1^\sharp=(\sigma_1^\sharp)^2
\end{equation}
since
\begin{equation}
\label{barsharp}
\bar\lambda_1 * \sigma_1^\sharp = (\overline\lambda_1 * \sigma_1)
(\overline\lambda_1 * \overline\lambda_1)=\sigma_1^\sharp\,.
\end{equation}
Here we used the fact that left $*$-multiplication by $\bar\lambda_1$ is
an antiautomorphism.
Denoting by $\mu'$ as in \cite{Chow} the twisted product
\begin{equation}
\mu'(A\otimes B\otimes C) = (\overline\lambda_1*B) A C,
\end{equation}
we have:

\begin{lemma}
\begin{equation}
\sigma_1^\sharp * (FG)
= \mu' \left[ (\sigma_1^\sharp*F) \otimes \Delta(G)\right].
\end{equation}
\end{lemma}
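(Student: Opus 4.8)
The plan is to reduce the identity to the splitting formula~(\ref{splitting}) by exploiting two facts already established: that $\sigma_1^\sharp$ is grouplike, and that left $*$-multiplication by $\bar\lambda_1$ is an antiautomorphism (used in~(\ref{barsharp})). First I would recall that $\sigma_1^\sharp=\bar\lambda_1\sigma_1$, and that by~(\ref{superization}) left internal multiplication by $\sigma_1^\sharp$ is the superization map $F\mapsto F^\sharp$. So the claim is equivalent to $(FG)^\sharp = \mu'[F^\sharp\otimes\Delta(G)]$, i.e.\ $(FG)^\sharp = (\overline\lambda_1 * G_{(1)})\, F^\sharp\, G_{(2)}$ in Sweedler notation for $\Delta(G)=\sum G_{(1)}\otimes G_{(2)}$.

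\textbf{Key steps.}
Since $\sigma_1^\sharp$ is grouplike, $\Delta(\sigma_1^\sharp) = \sigma_1^\sharp\otimes\sigma_1^\sharp$, so when we apply the splitting formula~(\ref{splitting}) to $(FG)*\sigma_1^\sharp$ with the two-factor decomposition $FG$, we get
\begin{equation}
(FG)*\sigma_1^\sharp = \mu_2\,(F\otimes G)*_2 (\sigma_1^\sharp\otimes\sigma_1^\sharp) = (F*\sigma_1^\sharp)(G*\sigma_1^\sharp) = F^\sharp\, G^\sharp.
\end{equation}
This shows $(FG)^\sharp = F^\sharp G^\sharp$, so it remains to check $F^\sharp G^\sharp = (\overline\lambda_1 * G_{(1)})\, F^\sharp\, G_{(2)}$, i.e.\ that $G^\sharp = (\overline\lambda_1 * G_{(1)})\, G_{(2)}$ when $F$ is replaced by $1$ — but more precisely we need to move the $\bar\lambda_1$-factor of $F^\sharp$ to the front past $F$ is \emph{not} what is happening; rather, I would write $F^\sharp = (\bar\lambda_1 * F)\cdot$(something) only when $F$ has trivial coproduct interaction, so instead the cleaner route is: use $\sigma_1^\sharp = \bar\lambda_1\sigma_1$ and the splitting formula directly on $(FG)*(\bar\lambda_1\sigma_1)$. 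Decompose the left argument as the three-fold product is not available since $FG$ has only two factors; instead apply~(\ref{splitting}) to $FG$ as a product of $\ell(F)+\ell(G)$ generators, or better, use the coassociativity $\Delta^2(\bar\lambda_1\sigma_1)$ and the fact that both $\bar\lambda_1$ and $\sigma_1$ have controlled coproducts. Concretely: $(FG)*\sigma_1^\sharp = \mu_2[(F\otimes G)*_2\Delta(\sigma_1^\sharp)]$, and now I instead want to split $G$, not $\sigma_1^\sharp$. Applying~(\ref{splitting}) the other way, write $G = G$ as is but expand $\sigma_1^\sharp*G$ using $\sigma_1^\sharp = \bar\lambda_1\sigma_1$: $\sigma_1^\sharp * G = (\bar\lambda_1\sigma_1)*G = \mu_2[(\bar\lambda_1\otimes\sigma_1)*_2\Delta(G)] = \sum(\bar\lambda_1 * G_{(1)})(\sigma_1 * G_{(2)}) = \sum(\bar\lambda_1*G_{(1)})\,G_{(2)}$, since $\sigma_1$ is neutral. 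Combining with $(FG)^\sharp = F^\sharp\, (\sigma_1^\sharp * G)$ — which itself follows from $(FG)*\sigma_1^\sharp = \mu_2[(F\otimes G)*_2(\sigma_1^\sharp\otimes\sigma_1^\sharp)]$ and $\sigma_1^\sharp*G = \sigma_1^\sharp*(1\cdot G)$ being the relevant second factor — we obtain $\sigma_1^\sharp*(FG) = F^\sharp\sum(\bar\lambda_1*G_{(1)})\,G_{(2)}$. Finally, since each $G_{(1)}$ and hence $\bar\lambda_1 * G_{(1)}$ lies in the subalgebra generated by the $S_{\bar n}$, which is central for the internal product but \emph{not} central for the ordinary product, I must be careful: in fact $\overline\lambda_1 * G_{(1)}$ need not commute with $F^\sharp$, so the correct ordering is exactly $(\overline\lambda_1 * G_{(1)})\,F^\sharp\,G_{(2)} = \mu'[(\sigma_1^\sharp*F)\otimes\Delta(G)]$ once one checks that the reproducing-kernel computation places the $\bar\lambda_1$-factor to the \emph{left} of $F^\sharp$. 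This ordering is forced by writing $\sigma_1^\sharp(FG) $-analogue through $\sigma_1^\sharp = \bar\lambda_1\sigma_1$ and tracking that $\bar\lambda_1$ sits leftmost; I would verify it by the splitting formula applied to the full product $\bar\lambda_1\sigma_1$ against $\Delta^2$ of $FG$ written with $F$ before $G$.

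\textbf{Main obstacle.}
The delicate point — and the one I expect to consume most of the work — is the \emph{placement} of the factor $\overline\lambda_1 * G_{(1)}$ relative to $F^\sharp$ and $G_{(2)}$ in a noncommutative algebra. One must show it belongs on the far left, as $\mu'$ prescribes, rather than being absorbed into $F^\sharp$ or sitting between the two tensor legs of $\Delta(G)$. This is precisely the content of the twisted multiplication $\mu'$ introduced above, and it is the reason a naive "superization is multiplicative" argument ($(FG)^\sharp = F^\sharp G^\sharp$) does not immediately give the stated formula: one has to further decompose $G^\sharp$ via its coproduct and recognize that the $\bar A$-contribution coming from $G$ must be collected and moved past $F^\sharp$ to the front. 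I would handle this by applying~(\ref{splitting}) to $\sigma_1^\sharp = \bar\lambda_1\sigma_1$ against $\Delta^{\ell(F)+1}$ with the factors of $FG$ grouped so that $\bar\lambda_1$ pairs with the coproduct component supplying the bar-alphabet and the neutral $\sigma_1$ with the rest, then reading off the noncommutative product in the order dictated by $\mu_r$. The remaining verifications ($\sigma_1$ neutral, $\bar\lambda_1$ grouplike enough to split, $\bar\sigma_1$ relations) are the routine ingredients already recorded in the excerpt.
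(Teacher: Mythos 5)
There is a genuine gap, and it stems from a left/right confusion at the outset. Equation~(\ref{superization}) says $F^\sharp=F*\sigma_1^\sharp$, i.e.\ superization is \emph{right} internal multiplication by $\sigma_1^\sharp$; the lemma, however, concerns $\sigma_1^\sharp*(FG)$, the \emph{left} multiplication (this is Chow's $\Theta$, and it is a different operator: e.g.\ $\sigma_1^\sharp*S^{11}=S^{11}+2S_{\bar 1}S_1+S^{\bar1\bar1}$ while $S^{11}*\sigma_1^\sharp=(S_1+S_{\bar 1})^2$). Consequently your opening reduction of the claim to a statement about $(FG)^\sharp$, the computation $(FG)*\sigma_1^\sharp=F^\sharp G^\sharp$ via grouplikeness of $\sigma_1^\sharp$, and the later step ``$(FG)^\sharp=F^\sharp\,(\sigma_1^\sharp*G)$'' (which silently replaces $G*\sigma_1^\sharp$ by $\sigma_1^\sharp*G$) all concern the wrong product and do not bear on the statement. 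The one computation you do carry out on the correct side, $\sigma_1^\sharp*G=\sum(\bar\lambda_1*G_1)G_2$, is right, but assembling it as $F^\sharp\sum(\bar\lambda_1*G_1)G_2$ gives the wrong ordering, and you acknowledge this without resolving it: the passage to $(\bar\lambda_1*G_1)(\sigma_1^\sharp*F)G_2$ is asserted (``forced by tracking that $\bar\lambda_1$ sits leftmost''), not derived.

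The missing step is the single application of the splitting formula that the paper uses: split the \emph{left} factor $\sigma_1^\sharp=\bar\lambda_1\sigma_1$ into its two factors and apply $\Delta$ to $FG$, so that
\begin{equation*}
\sigma_1^\sharp*(FG)=\mu\bigl[(\bar\lambda_1\otimes\sigma_1)*(\Delta F\,\Delta G)\bigr]
=\sum_{(F),(G)}\bigl(\bar\lambda_1*(F_1G_1)\bigr)\,F_2G_2 ,
\end{equation*}
using that $\sigma_1$ is neutral. The twisted ordering of $\mu'$ then comes precisely from the fact that left $*$-multiplication by $\bar\lambda_1$ is an \emph{anti}automorphism applied to the product $F_1G_1$: $\bar\lambda_1*(F_1G_1)=(\bar\lambda_1*G_1)(\bar\lambda_1*F_1)$, which places the $G_1$-contribution on the far left, after which $\sum_{(F)}(\bar\lambda_1*F_1)F_2=\sigma_1^\sharp*F$ reassembles the middle factor. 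You cite the antiautomorphism property at the start but never apply it to $F_1G_1$; that is exactly the step that produces the ordering $(\bar\lambda_1*G_1)(\sigma_1^\sharp*F)G_2=\mu'[(\sigma_1^\sharp*F)\otimes\Delta(G)]$, and without it the proof is not complete.
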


\begin{proof}
\begin{equation}
\begin{split}
\sigma_1^\sharp * (FG)
&= \mu \left[ (\overline\lambda_1 \otimes \sigma_1)*(\Delta F\Delta G)\right]\\
&= \sum_{(F),(G)} \mu\left[ (\overline\lambda_1*F_1G_1) \otimes F_2G_2\right]\\
&= \sum (\overline\lambda_1*G_1)((\overline\lambda_1*F_1)F_2)G_2\\
&= \mu' \left[ (\sigma_1^\sharp*F) \otimes \Delta(G)\right].
\end{split}
\end{equation}
\end{proof}

\noindent
This is Chow's third condition, which completes the characterization of
$\Theta$.

\subsection{Idempotents in $\BSym$}

Define elements $\zeta_n \in \BSym$ by the generating series
\begin{align}
\label{BSymZ}
\sigma_1^\sharp =:
\left(e^{\zeta_1}e^{\zeta_2}e^{\zeta_3}\cdots\right)
\left(\cdots e^{\zeta_3}e^{\zeta_2}e^{\zeta_1}\right).
\end{align}
For example, collecting the terms of weights 1, 2 and 3, respectively, we have
\begin{align}
S_1^\sharp = 2 \zeta_1,
\quad
S_2^\sharp = 2 \zeta_2 + 2 \zeta_1^2,
\quad
S_3^\sharp = 2 \zeta_3 + 2 \zeta_2\zeta_1 + 2 \zeta_1\zeta_2
             + \frac43\zeta_1^3,
\end{align}
so that,
\begin{align}
\zeta_1 = \frac 12 S_1^\sharp,
\quad
\zeta_2 = \frac12 S^\sharp_2 - \frac 14 S^{11\sharp},
\quad
\zeta_3 = \frac12 S^\sharp_3 
        - \frac14 S^{21\sharp} - \frac14 S^{12\sharp}
        + \frac 16 S^{111\sharp}.
\end{align}
Note that the elements $\zeta_n$ are well-defined and that they are primitive. 
We shall use the notations
\begin{equation}
\left(e^{\zeta_1}e^{\zeta_2}\cdots\right) =: \Eu(\zeta)\,,\qquad
\left(\cdots e^{\zeta_2}e^{\zeta_1}\right) =: \Ed(\zeta).
\end{equation}

Next, define elements $\tilde \zeta_n \in \BSym$ by the generating series
\begin{align}
\label{BSymtZ}
\sigma_1 =:
  \left(\sum_{n\geq0} \tilde \zeta_n\right)
  \left(\cdots e^{\zeta_2}e^{\zeta_1}\right).
=: \tilde \zeta \Ed(\zeta).
\end{align}
For example,
\begin{gather}
S_1 = \zeta_1 + \tilde\zeta_1, \qquad
S_2 = \zeta_2 + \frac12\zeta_1^2 + \tilde\zeta_1\zeta_1 + \tilde\zeta_2, \\
S_3
    = \frac16\zeta_1^3 + \zeta_2\zeta_1 + \zeta_3 + \tilde\zeta_1\zeta_2 +
    \frac12\tilde\zeta_1\zeta_1^2 + \tilde\zeta_2\zeta_1 + \tilde\zeta_3,
\end{gather}
so that
\begin{gather}
\tilde\zeta_1 
    = S_1 - \frac12 S_1^\sharp,
\qquad
\tilde\zeta_2 
    = S_2 - \frac12 S_2^\sharp - \frac12 S_1S_1^\sharp + \frac38 S^{11\sharp}, 
\\
\tilde\zeta_3 
    = 
      S_3
    - \frac12 S_2 S_1^\sharp
    - \frac12 S_1 S_2^\sharp
    + \frac38 S_1 S^{11\sharp}
    - \frac12 S_3^\sharp
    + \frac14 S^{21\sharp} 
    + \frac12 S^{12\sharp} 
    - \frac{5}{16} S^{111\sharp}.
\end{gather}
Since $\sigma_1$ is grouplike, and since $e^{\zeta_n}$ is grouplike for all
$n\geq1$,  the series $\tilde \zeta$ is also
grouplike.

The next two lemmas describe some properties of the elements $\tilde \zeta_n$
and $\zeta_n$.

\begin{lemma}
The ordered exponentials are exchanged as follows:
\begin{equation}
\overline\lambda_1 * \Ed(\zeta) = \Eu(\zeta).
\end{equation}
In particular, $\overline\lambda_1 * \zeta_i = \zeta_i$ for all $i\geq0$.
\end{lemma}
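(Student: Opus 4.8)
The plan is to work directly with the defining generating series and exploit the two facts already established in the excerpt: that left $*$-multiplication by $\overline\lambda_1$ is an antiautomorphism of $\MR$ (used around~(\ref{barsharp})), and that $\overline\lambda_1 * \sigma_1^\sharp = \sigma_1^\sharp$, which is~(\ref{barsharp}) itself. Recall that $\Eu(\zeta) = e^{\zeta_1}e^{\zeta_2}e^{\zeta_3}\cdots$ and $\Ed(\zeta) = \cdots e^{\zeta_3}e^{\zeta_2}e^{\zeta_1}$, and that by definition~(\ref{BSymZ}) we have $\sigma_1^\sharp = \Eu(\zeta)\,\Ed(\zeta)$.

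First I would apply $\overline\lambda_1 *$ to the identity $\sigma_1^\sharp = \Eu(\zeta)\Ed(\zeta)$. Since $\overline\lambda_1 * (-)$ is an antiautomorphism for the \emph{external} product, and since $\Eu(\zeta)$ and $\Ed(\zeta)$ are reverses of one another as ordered products of the factors $e^{\zeta_n}$, we get
\begin{equation}
\overline\lambda_1 * \sigma_1^\sharp
= \overline\lambda_1 * \bigl(\Eu(\zeta)\Ed(\zeta)\bigr)
= \bigl(\overline\lambda_1 * \Ed(\zeta)\bigr)\,\bigl(\overline\lambda_1 * \Eu(\zeta)\bigr).
\end{equation}
Here I must be a little careful: an antiautomorphism sends a product $FG$ to (image of $G$)(image of $F$), and it sends an ordered product $e^{\zeta_1}e^{\zeta_2}\cdots$ to the reversed ordered product of the images $\cdots (e^{\zeta_2})'(e^{\zeta_1})'$ where $(e^{\zeta_n})' = \overline\lambda_1 * e^{\zeta_n} = \exp(\overline\lambda_1 * \zeta_n)$ (using that $*$-multiplication by the grouplike-ish $\overline\lambda_1$ respects exponentials of primitives via the $*$-multiplicativity of the coproduct cited after point~(ii) of the last lemma). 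So in fact $\overline\lambda_1 * \Eu(\zeta) = \cdots (e^{\zeta_2})' (e^{\zeta_1})'$ has the shape of a "down" exponential in the elements $\zeta_n' := \overline\lambda_1 * \zeta_n$, and $\overline\lambda_1 * \Ed(\zeta) = (e^{\zeta_1})' (e^{\zeta_2})'\cdots$ has the shape of an "up" exponential in the $\zeta_n'$.

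Combining, the left-hand side is $\sigma_1^\sharp$ by~(\ref{barsharp}), so we obtain
\begin{equation}
\sigma_1^\sharp
= \Eu(\zeta')\,\Ed(\zeta'),
\end{equation}
which is exactly the defining relation~(\ref{BSymZ}) for the family $\zeta_n'$ in place of $\zeta_n$. Since the $\zeta_n$ are uniquely determined by that relation (the series $\sigma_1^\sharp$ freely determines them degree by degree, as noted right after~(\ref{BSymZ})), uniqueness forces $\zeta_n' = \zeta_n$, i.e. $\overline\lambda_1 * \zeta_n = \zeta_n$ for all $n$. Feeding this back, $\overline\lambda_1 * \Ed(\zeta) = \Ed(\zeta)'$ read as an "up" exponential in the $\zeta_n' = \zeta_n$ is precisely $\Eu(\zeta)$, which is the first assertion. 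The in-particular statement is the identity $\zeta_n' = \zeta_n$ just proved (with the degree-$0$ case $\zeta_0$ trivial).

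The main obstacle I anticipate is bookkeeping the effect of an antiautomorphism on an \emph{ordered infinite product of exponentials}: one has to justify cleanly that it reverses the order of the factors and replaces each $e^{\zeta_n}$ by $\exp(\overline\lambda_1*\zeta_n)$, which rests on (a) $\overline\lambda_1*(-)$ being an anti\-auto\-morphism for the external product, and (b) $*$-multiplicativity of the coproduct, so that $\overline\lambda_1$ acting on the left preserves primitivity and hence commutes with forming $\exp$. Once that structural point is nailed down, the argument is just "apply $\overline\lambda_1*(-)$, use~(\ref{barsharp}), invoke uniqueness of the $\zeta_n$"; no computation is needed, and the small $3$-term examples displayed before the lemma can serve as a sanity check.
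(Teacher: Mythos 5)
Your argument is correct and follows essentially the same route as the paper's proof: apply $\overline\lambda_1 *(-)$ as an antiautomorphism to reverse the ordered exponentials, use~(\ref{barsharp}) to recover $\sigma_1^\sharp$ on the left, and invoke the uniqueness of the $\zeta_n$ in the characterization~(\ref{BSymZ}) to conclude $\overline\lambda_1*\zeta_n=\zeta_n$. Your extra care about $\overline\lambda_1 * e^{\zeta_n} = \exp(\overline\lambda_1*\zeta_n)$ is a welcome elaboration of a step the paper leaves implicit, but it is not a different method.
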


\Proof
$\overline\lambda_1*\cdot$ is an antiautomorphism, so the left-hand side is
\begin{equation}
(\overline\lambda_1 * e^{\zeta_1})
(\overline\lambda_1 * e^{\zeta_2})
\dots
\end{equation}
Taking into account (\ref{barsharp}), and recalling that (\ref{BSymZ})
characterizes  the $\zeta_i$, we see that if we set
\begin{equation}
\zeta'_i := \overline\lambda_1 * \zeta_i,
\end{equation}
then
\begin{equation}
\sigma_1^\sharp =
  \left(e^{\zeta'_1}e^{\zeta'_2}\cdots\right)
  \left(\cdots e^{\zeta'_2}e^{\zeta'_1}\right),
\end{equation}
so that $\zeta'_i=\zeta_i$.
\qed

\begin{lemma}
\label{SigmaSharpIntProdTildeZeta}
For all $n\geq1$,
\begin{equation}
\sigma_1^\sharp * \tilde \zeta_n = 0.
\end{equation}
\end{lemma}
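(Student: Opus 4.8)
The goal is to show $\sigma_1^\sharp * \tilde\zeta_n = 0$ for all $n \geq 1$. My plan is to work with generating functions rather than with individual homogeneous components. I would start from the defining relation (\ref{BSymtZ}), namely $\sigma_1 = \tilde\zeta\,\Ed(\zeta)$, and apply $\sigma_1^\sharp * (-)$ to both sides. On the left we get $\sigma_1^\sharp * \sigma_1 = \sigma_1^\sharp$, using Chow's first condition (established earlier in the ``Preliminary lemmas'' subsection). On the right we need to compute $\sigma_1^\sharp * (\tilde\zeta\,\Ed(\zeta))$, and this is precisely where the Lemma preceding ``Idempotents in $\BSym$'' — the identity $\sigma_1^\sharp * (FG) = \mu'[(\sigma_1^\sharp * F)\otimes\Delta(G)]$ — enters. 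Since $\tilde\zeta$ is grouplike, $\Delta(\tilde\zeta) = \tilde\zeta\otimes\tilde\zeta$, so repeated application (or a single application with $F = \tilde\zeta$, $G = \Ed(\zeta)$) should let me push the $\sigma_1^\sharp * (-)$ through the product.

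The key computational input is the behavior of $\sigma_1^\sharp * (-)$ on the two factors separately. For $\Ed(\zeta)$: since each $\zeta_i$ is primitive and $\overline\lambda_1 * \zeta_i = \zeta_i$ (the Lemma just before this statement), and since $\sigma_1^\sharp = \bar\lambda_1\sigma_1$ with $\sigma_1^\sharp * F = F^\sharp$, I expect $\sigma_1^\sharp * \Ed(\zeta)$ to reduce to something expressible via $\Eu(\zeta)$ and $\Ed(\zeta)$ — indeed, using $\sigma_1^\sharp * F = \mu[(\bar\lambda_1\otimes\sigma_1)*\Delta F]$ on the grouplike-up-to-ordering exponential, together with $\overline\lambda_1 * \Ed(\zeta) = \Eu(\zeta)$, one should get $\sigma_1^\sharp * \Ed(\zeta) = \Eu(\zeta)\Ed(\zeta) = \sigma_1^\sharp$ by the defining relation (\ref{BSymZ}). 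For $\tilde\zeta$: one computes $\sigma_1^\sharp * \tilde\zeta$ similarly; combining via the $\mu'$-formula and the fact that $\sigma_1^\sharp * \sigma_1^\sharp = (\sigma_1^\sharp)^2$, the whole right-hand side should collapse to $\sigma_1^\sharp \cdot (\sigma_1^\sharp * \tilde\zeta)$ or similar, and matching with the left-hand side $\sigma_1^\sharp$ forces $\sigma_1^\sharp * \tilde\zeta = 1$ (the degree-zero grouplike), i.e. all positive-degree components $\sigma_1^\sharp * \tilde\zeta_n$ vanish.

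Alternatively — and this may be cleaner — I would rearrange (\ref{BSymtZ}) as $\tilde\zeta = \sigma_1 \cdot \Ed(\zeta)^{-1}$, apply $\sigma_1^\sharp * (-)$ using the $\mu'$-multiplicativity from the Lemma, and use $\sigma_1^\sharp * \sigma_1 = \sigma_1^\sharp$ together with $\sigma_1^\sharp * \Ed(\zeta)^{-1} = \Ed(\zeta)^{-1}\sigma_1^\sharp$-type identities coming from primitivity of the $\zeta_i$ and $\overline\lambda_1 * \zeta_i = \zeta_i$. Everything should telescope so that $\sigma_1^\sharp * \tilde\zeta$ is grouplike of the form $\sigma_1^\sharp \cdot (\text{something}) = \sigma_1^\sharp$, giving the claim after cancelling $\sigma_1^\sharp$ (which has invertible leading term).

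\textbf{Main obstacle.} The delicate point is bookkeeping the noncommutativity: $\sigma_1^\sharp * (-)$ is \emph{not} an algebra homomorphism (it is only $\mu'$-multiplicative in the twisted sense of the Lemma), so I cannot naively distribute it over the product $\tilde\zeta\,\Ed(\zeta)$. The heart of the argument is to apply the $\mu'$-formula correctly, exploiting that $\tilde\zeta$ is grouplike so that $\Delta(\tilde\zeta)=\tilde\zeta\otimes\tilde\zeta$, and to track how the twist $\overline\lambda_1 * (-)$ interacts with $\Ed(\zeta)$ — which is exactly controlled by the preceding lemma $\overline\lambda_1 * \Ed(\zeta) = \Eu(\zeta)$. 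Getting these two ingredients to fit together so the right-hand side genuinely collapses to $\sigma_1^\sharp$ is the crux; once that is done, the conclusion is immediate by comparing degrees.
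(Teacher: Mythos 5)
Your proposal is correct and follows essentially the same route as the paper: the paper likewise computes $\sigma_1^\sharp=\sigma_1^\sharp*\sigma_1=\sigma_1^\sharp*(\tilde\zeta\,\Ed(\zeta))$ and, via the splitting formula, the antiautomorphism property of $\overline\lambda_1*(-)$ and the identity $\overline\lambda_1*\Ed(\zeta)=\Eu(\zeta)$, collapses the right-hand side to $\Eu(\zeta)\,(\sigma_1^\sharp*\tilde\zeta)\,\Ed(\zeta)$, forcing $\sigma_1^\sharp*\tilde\zeta=1$. Your use of the $\mu'$-multiplicativity lemma is just a packaged form of that same computation, so the two arguments coincide in substance.
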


\Proof
By definition,
\begin{equation}
\begin{split}
\Eu(\zeta)\Ed(\zeta) = \sigma_1^\sharp
&= \sigma_1^\sharp * \sigma_1 \\
&= \sigma_1^\sharp * \left[\tilde \zeta \Ed(\zeta)\right] \\
&= (\overline\lambda_1\sigma_1) * (\tilde \zeta \Ed(\zeta))\\
&= \mu\left[ (\overline\lambda_1\otimes\sigma_1)
   * (\tilde \zeta \Ed(\zeta)\otimes \tilde \zeta \Ed(\zeta))\right]\\
&= (\overline\lambda_1 *  \tilde \zeta \Ed(\zeta))
   \ (\tilde \zeta \Ed(\zeta))\\
&= (\overline\lambda_1 *  \Ed(\zeta))
   \ (\overline\lambda_1 *  \tilde \zeta)
   \ \tilde \zeta 
   \ \Ed(\zeta) \\
&= \Eu(\zeta)
   \ (\overline\lambda_1 *  \tilde \zeta)
   \ (\sigma_1 * \tilde \zeta)
   \ \Ed(\zeta) \\
&= \Eu(\zeta)
   \ (\sigma_1^\sharp * \tilde \zeta)
   \ \Ed(\zeta)
\end{split} 
\end{equation}
so that $\sigma_1^\sharp * \tilde \zeta = 1$.
\qed

\begin{lemma}
\label{SigmaSharpIntProdZeta}
For all $n\geq1$,
\begin{equation}
\sigma_1^\sharp * \zeta_n = 2\zeta_n.
\end{equation}
\end{lemma}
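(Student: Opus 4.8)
The plan is to mimic the computation used for Lemma~\ref{SigmaSharpIntProdTildeZeta}, but starting from the defining series~(\ref{BSymZ}) of the $\zeta_n$ rather than from $\sigma_1$. Since $\sigma_1^\sharp$ is grouplike and $\sigma_1^\sharp*\sigma_1^\sharp=(\sigma_1^\sharp)^2$, I would write $\sigma_1^\sharp = \Eu(\zeta)\Ed(\zeta)$ and apply $\sigma_1^\sharp*\cdot$ to this product using the Lemma on $\sigma_1^\sharp*(FG)=\mu'[(\sigma_1^\sharp*F)\otimes\Delta(G)]$, together with $\sigma_1^\sharp = \overline\lambda_1\sigma_1$ and the splitting formula, exactly as in the proof of Lemma~\ref{SigmaSharpIntProdTildeZeta}.

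First I would record the two facts I need about the ordered exponentials: $\overline\lambda_1*\Ed(\zeta)=\Eu(\zeta)$ (previous lemma), hence also $\overline\lambda_1*\zeta_i=\zeta_i$, and $\sigma_1*\Ed(\zeta)=\Ed(\zeta)$, $\sigma_1*\Eu(\zeta)=\Eu(\zeta)$ since $\sigma_1$ is neutral for the internal product (so $\sigma_1*\zeta_i=\zeta_i$). Combining these gives $\sigma_1^\sharp*\zeta_i=(\overline\lambda_1*\zeta_i)+(\sigma_1*\zeta_i)$-type contributions; more precisely, applying $\sigma_1^\sharp*\cdot$ to $\Eu(\zeta)\Ed(\zeta)$ and expanding via $\sigma_1^\sharp=\overline\lambda_1\sigma_1$ and the splitting formula, one gets
\begin{equation}
\sigma_1^\sharp * \bigl(\Eu(\zeta)\Ed(\zeta)\bigr)
= \bigl(\overline\lambda_1 * \Eu(\zeta)\bigr)\,\bigl(\overline\lambda_1 * \Ed(\zeta)\bigr)\,\bigl(\sigma_1 * \Eu(\zeta)\bigr)\,\bigl(\sigma_1 * \Ed(\zeta)\bigr)
\end{equation}
after grouping terms suitably (using that the relevant factors are grouplike). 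The issue is that this just reproduces $\sigma_1^\sharp\cdot 1\cdot\sigma_1^\sharp=(\sigma_1^\sharp)^2$, which is not yet what I want; I need to extract the \emph{primitive} part.

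The cleaner route is to use primitivity directly. Since $\zeta_n$ is primitive, $\Delta\zeta_n=\zeta_n\otimes 1+1\otimes\zeta_n$, and the splitting formula gives, for any grouplike series $g=g_1\cdot g_2$ (here $g=\sigma_1^\sharp$, $g_1=\overline\lambda_1$, $g_2=\sigma_1$),
\begin{equation}
\sigma_1^\sharp * \zeta_n = (\overline\lambda_1 * \zeta_n)\cdot(\sigma_1 * 1) + (\overline\lambda_1 * 1)\cdot(\sigma_1 * \zeta_n) = (\overline\lambda_1 * \zeta_n) + (\sigma_1 * \zeta_n).
\end{equation}
By the previous lemma $\overline\lambda_1*\zeta_n=\zeta_n$, and since $\sigma_1$ is neutral, $\sigma_1*\zeta_n=\zeta_n$. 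Hence $\sigma_1^\sharp*\zeta_n=2\zeta_n$. I would then just need to double-check the convention: $\sigma_1^\sharp = \overline\lambda_1\sigma_1$ as a product in the algebra, the internal product is computed by the splitting formula $(f_1 f_2)*g = \mu[(f_1\otimes f_2)*_2\Delta g]$, and $1$ acts as the identity for both products in the relevant degree-zero sense. The main obstacle is bookkeeping: making sure the "$*$" on the left is the \emph{left} internal product by $\sigma_1^\sharp$ as used throughout, and that the term $\overline\lambda_1*1$ and $\sigma_1*1$ each contribute $1$, so that no extra terms survive; the primitivity of $\zeta_n$ is what kills all the cross terms, so I would state that explicitly.
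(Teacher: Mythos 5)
Your final argument is correct and is essentially the paper's own proof: write $\sigma_1^\sharp=\overline\lambda_1\sigma_1$, apply the splitting formula to the primitive element $\zeta_n$, and use $\overline\lambda_1*\zeta_n=\zeta_n$ (previous lemma) together with the neutrality of $\sigma_1$ to get $\zeta_n+\zeta_n=2\zeta_n$. The preliminary detour through $\Eu(\zeta)\Ed(\zeta)$ is unnecessary, but you correctly discard it in favour of the direct computation.
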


\Proof
We have
\begin{equation}
\begin{split}
\sigma_1^\sharp * \zeta_n
&= (\overline\lambda_1\sigma_1) * \zeta_n\\
&= \mu\left[ (\overline\lambda_1\otimes\sigma_1)
             * (\zeta_n\otimes1+ 1\otimes \zeta_n)\right] \\
&= 2\zeta_n.
\end{split}
\end{equation}
\qed

\begin{proposition}
\label{BSymIntProdSandZ}
Let $I=(i_0,\ldots,i_p)$ be a $B$-composition of $n$ and let
$\lambda = (\lambda_0, \ldots, \lambda_k)$ be a $B$-partition of $n$.
\begin{align}
 \tilde S^I *
    \tilde \zeta_{\lambda_0} \zeta_{\lambda_1} \cdots \zeta_{\lambda_k}
= \begin{cases}
0, & \text{if } \lambda \not\preceq I\dwna, \\
\left(2^p \prod_{j\geq1} m_j!\right)
\tilde \zeta_{i_0} \zeta_{i_1} \cdots \zeta_{i_p}
& \text{if } \lambda = I\dwna,
\end{cases}
\end{align}
where $m_j$ is the multiplicity of $j$ in $(i_1, i_2, \ldots, i_p)$ (not
counting $i_0!$).
\end{proposition}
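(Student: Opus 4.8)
\medskip
\noindent\textbf{Proof strategy.}
The plan is to write $\tilde S^I$ as a product of $p+1$ homogeneous factors and
apply the splitting formula~(\ref{splitting}). Using the embedding of $\BSym$
into $\MR$, $\tilde S^I=S_{i_0}(A)\,S^{(i_1,\dots,i_p)}(A|\bar A)
=S_{i_0}\,S_{i_1}^\sharp\cdots S_{i_p}^\sharp$ is a product of factors of degrees
$i_0,i_1,\dots,i_p$; set $g:=\tilde\zeta_{\lambda_0}\zeta_{\lambda_1}\cdots\zeta_{\lambda_k}$. Then
$\tilde S^I*g=\mu_{p+1}\big[(S_{i_0}\otimes S_{i_1}^\sharp\otimes\cdots\otimes
S_{i_p}^\sharp)*_{p+1}\Delta^{p+1}g\big]$. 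Since $\Delta$ is multiplicative, the
$\zeta_m$ are primitive, and $\tilde\zeta=\sum_m\tilde\zeta_m$ is grouplike, I would
expand $\Delta^{p+1}g$ explicitly: it is a sum indexed by a vector
$(a_0,\dots,a_p)$ with $\sum_j a_j=\lambda_0$ and a map
$f\colon\{1,\dots,k\}\to\{0,\dots,p\}$, the content of slot $j$ being
$\tilde\zeta_{a_j}\prod_{i\in f^{-1}(j)}\zeta_{\lambda_i}$.

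\smallskip
The heart of the argument is the slotwise evaluation of $*_{p+1}$. On slot $0$,
$S_{i_0}$ is the unit of the internal product in degree $i_0$ (since $\sigma_1$ is
neutral in $\MR$), so this slot survives only if its degree equals $i_0$ and is
then left untouched. For $j\ge1$, associativity of the internal product together
with $S_{i_j}^\sharp=S_{i_j}*\sigma_1^\sharp$ give
$S_{i_j}^\sharp*x=S_{i_j}*(\sigma_1^\sharp*x)=\sigma_1^\sharp*x$ for any $x$
homogeneous of degree $i_j$; so slot $j$ ($j\ge1$) contributes
$\sigma_1^\sharp*(\text{its content})$, and only if that content has degree $i_j$.
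The decisive observation is that Chow's lemma, applied with $F=\tilde\zeta_{a_j}$ and
$G=\prod_{i\in f^{-1}(j)}\zeta_{\lambda_i}$, writes
$\sigma_1^\sharp*\big(\tilde\zeta_{a_j}\prod_{i\in f^{-1}(j)}\zeta_{\lambda_i}\big)$ as a sum
of terms each carrying the factor $\sigma_1^\sharp*\tilde\zeta_{a_j}$, which is $0$
whenever $a_j\ge1$ by Lemma~\ref{SigmaSharpIntProdTildeZeta}. Hence only the terms
with $a_j=0$ for all $j\ge1$ survive, i.e. $a_0=\lambda_0$: the whole factor
$\tilde\zeta_{\lambda_0}$ stays in slot $0$. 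This collapses the expansion to
\[
\tilde S^I*g=\sum_{f}\Big(\tilde\zeta_{\lambda_0}\!\!\prod_{i\in f^{-1}(0)}\!\!\zeta_{\lambda_i}\Big)
\prod_{j=1}^{p}\sigma_1^\sharp*\Big(\prod_{i\in f^{-1}(j)}\zeta_{\lambda_i}\Big),
\]
the sum being over the maps $f$ with $\lambda_0+\sum_{i\in f^{-1}(0)}\lambda_i=i_0$ and
$\sum_{i\in f^{-1}(j)}\lambda_i=i_j$ for $1\le j\le p$.

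\smallskip
It then remains to read off the two cases. Such a map $f$ exhibits each $i_j$
($j\ge1$) as a sum of some of the $\lambda_i$'s and $i_0$ as $\lambda_0$ together with
the remaining $\lambda_i$'s, which is precisely the relation $\lambda\preceq I\dwna$; so
if $\lambda\not\preceq I\dwna$ there is no such $f$ and $\tilde S^I*g=0$. If
$\lambda=I\dwna$ then $\lambda_0=i_0$ and $(\lambda_1,\dots,\lambda_k)=(i_1,\dots,i_p)\dwna$,
in particular $k=p$; the constraint on slot $0$ forces $f^{-1}(0)=\emptyset$, and
since $f^{-1}(1),\dots,f^{-1}(p)$ partition a $p$-element set into $p$ nonempty
blocks, each block is a singleton. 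Thus $f$ ranges over the bijections matching
the $\lambda_i$'s to the slots with equal value (there are $\prod_{j\ge1}m_j!$ of
these), and each contributes $\tilde\zeta_{i_0}\prod_{j=1}^{p}(\sigma_1^\sharp*\zeta_{i_j})
=2^{p}\,\tilde\zeta_{i_0}\zeta_{i_1}\cdots\zeta_{i_p}$ by Lemma~\ref{SigmaSharpIntProdZeta};
summing yields $\big(2^{p}\prod_{j\ge1}m_j!\big)\tilde\zeta_{i_0}\zeta_{i_1}\cdots\zeta_{i_p}$.

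\smallskip
I expect the slotwise reduction to be the delicate part: the two points that
$S_{i_j}^\sharp*(-)$ agrees with $\sigma_1^\sharp*(-)$ on the degree-$i_j$ component,
and that $\sigma_1^\sharp*\big(\tilde\zeta_a(\cdots)\big)$ vanishes once $a\ge1$, are
exactly what turn the a priori large expansion of $\Delta^{p+1}g$ into a sum over
the harmless combinatorial data $f$. After that the computation is bookkeeping,
the one thing needing care being the verification that the degree constraints on
$f$ are a faithful translation of the order $\preceq$ on type-$B$ partitions, so
that the index set is empty exactly when $\lambda\not\preceq I\dwna$.
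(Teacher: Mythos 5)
Your proof is correct and follows essentially the same route as the paper's: apply the splitting formula to $S_{i_0}\otimes S_{i_1}^\sharp\otimes\cdots\otimes S_{i_p}^\sharp$, use $F^\sharp=F*\sigma_1^\sharp$ together with Lemma~\ref{SigmaSharpIntProdTildeZeta} to kill every term placing a $\tilde\zeta_{a_j}$ with $a_j\ge1$ in a slot $j\ge1$, and finish with Lemma~\ref{SigmaSharpIntProdZeta}. You merely make explicit two points the paper leaves implicit, namely the appeal to Chow's lemma for the vanishing of $\sigma_1^\sharp*\bigl(\tilde\zeta_{a_j}\,\zeta^{\alpha^{(j)}}\bigr)$ and the bijection count $\prod_{j\ge1}m_j!$ in the case $\lambda=I\dwna$.
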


\Proof
The splitting formula yields
\begin{equation}
\begin{split}
\tilde S^I *
    \tilde \zeta_{\lambda_0} \zeta_{\lambda_1} \cdots \zeta_{\lambda_k}
&= \mu_p\left[
       (S_{i_0}\otimes S_{i_1}^\sharp\otimes\dots\otimes  S_{i_p}^\sharp)
       * \sum \tilde \zeta_{\alpha_0}^\sharp \zeta^{\alpha^{(0)}}
              \otimes\dots\otimes
              \tilde \zeta_{\alpha_p}^\sharp \zeta^{\alpha^{(p)}} \right] \\
&= \sum (S_{i_0} * \tilde \zeta_{\alpha_0}^\sharp \zeta^{\alpha^{(0)}})
         (S_{i_1}^\sharp * \tilde \zeta_{\alpha_1}^\sharp \zeta^{\alpha^{(1)}})
\cdots
        (S_{i_p}^\sharp * \tilde \zeta_{\alpha_p}^\sharp \zeta^{\alpha^{(p)}}).
\end{split}
\end{equation}
By Equation~\eqref{superization} and Lemma~\ref{SigmaSharpIntProdTildeZeta}, 
a summand is zero if any $\alpha_i>0$ for $i\geq1$, so that
\begin{equation}
\begin{split}
\tilde S^I * \tilde \zeta_{\lambda_0}
 \zeta_{\lambda_1} \cdots \zeta_{\lambda_k}
&= \sum (S_{i_0} * \tilde \zeta_{\alpha_0}^\sharp \zeta^{\alpha^{(0)}})
        (S_{i_1}^\sharp * \zeta^{\alpha^{(1)}}) \dots
        (S_{i_r}^\sharp * \zeta^{\alpha^{(r)}}).
\end{split}
\end{equation}
If a term is non-zero in this equation, then $\lambda \preceq I\dwna$. This
proves the first case.
By Equation~\eqref{superization} and Lemma~\ref{SigmaSharpIntProdZeta}, 
$S_i^\sharp*\zeta_i=2\zeta_i$, which proves the second case.
\qed

We are now in a position to give an explicit formula for the idempotents
of~\cite{Sal}.

\begin{theorem}
\label{BSymIdempotentTheorem}
For all $B$-partitions
$\lambda = (\lambda_0, \lambda_1, \ldots, \lambda_k)$ of $n$, define elements
$e_\lambda \in \BSym_n$ recursively by the formula
\begin{align}
e_\lambda = \frac1{2^k \prod_j m_j!} \tilde S^\lambda *
\left( S_n - \sum_{\mu < \lambda} e_\mu \right),
\end{align}
where $m_j$ is the multiplicity of $j$ in $(\lambda_1, \ldots, \lambda_k)$
(not counting $\lambda_0$!). Then
\begin{align}
\label{DefinitionBSymIdempotents}
e_\lambda = \frac1{\prod_j m_j!} \tilde \zeta_{\lambda_0} \zeta_{\lambda_1}
\cdots \zeta_{\lambda_k}.
\end{align}
\end{theorem}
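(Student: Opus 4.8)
The plan is to mimic exactly the proof of Theorem~\ref{SymIdempotentTheorem}, replacing the role of Lemma~\ref{lem-SetZ} by Proposition~\ref{BSymIntProdSandZ} and the role of \eqref{SenZ} by the defining series~\eqref{BSymtZ} read off degree by degree. Concretely, write $e'_\lambda := \frac{1}{\prod_j m_j!}\,\tilde\zeta_{\lambda_0}\zeta_{\lambda_1}\cdots\zeta_{\lambda_k}$ for the right-hand side of~\eqref{DefinitionBSymIdempotents}, and show that the family $(e'_\lambda)$ satisfies the same recursion as the $e_\lambda$. Since the recursion uniquely determines the $e_\lambda$ (each step expresses $e_\lambda$ in terms of a $*$-product with $\tilde S^\lambda$ and the strictly earlier $e_\mu$'s, with $e_{(n)}$ or the base case pinned down), establishing the recursion for $e'_\lambda$ forces $e_\lambda=e'_\lambda$.

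First I would record that expanding~\eqref{BSymtZ} in degree $n$ gives
\begin{equation}
S_n = \sum_{\lambda}\ \frac{1}{\prod_j m_j!}\ \tilde\zeta_{\lambda_0}\zeta_{\lambda_1}\cdots\zeta_{\lambda_k},
\end{equation}
the sum over all $B$-partitions $\lambda=(\lambda_0,\lambda_1,\ldots,\lambda_k)$ of $n$, where the coefficient $1/\prod_j m_j!$ is precisely the multinomial factor coming from the ordered exponential $\Ed(\zeta)=\cdots e^{\zeta_2}e^{\zeta_1}$ combined with the grouplike series $\tilde\zeta=\sum\tilde\zeta_n$; in other words $S_n = \sum_\mu e'_\mu$. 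Next, apply Proposition~\ref{BSymIntProdSandZ} with $I=\lambda$: the case $\lambda=I\dwna$ gives $\tilde S^\lambda * e'_\lambda = 2^k\prod_j m_j!\cdot \frac{1}{\prod_j m_j!}\,\tilde\zeta_{\lambda_0}\zeta_{\lambda_1}\cdots\zeta_{\lambda_k} = 2^k\,\prod_j m_j!\cdot e'_\lambda$, up to the bookkeeping of the factor $2^k$ which is exactly the prefactor $\frac{1}{2^k\prod_j m_j!}$ appearing in the stated recursion. Then, substituting $S_n=\sum_\mu e'_\mu$ and using the vanishing case $\tilde S^\lambda * e'_\mu = 0$ whenever $\mu\not\preceq\lambda\dwna=\lambda$ (Proposition~\ref{BSymIntProdSandZ}, first case), the sum $\tilde S^\lambda *\bigl(S_n - \sum_{\mu\neq\lambda} e'_\mu\bigr)$ collapses: all $\mu$ not refining $\lambda$ drop out, and in particular all $\mu$ with $\mu<\lambda$ but $\mu\not\preceq\lambda$ drop out, leaving $\tilde S^\lambda *\bigl(S_n-\sum_{\mu<\lambda}e'_\mu\bigr) = \tilde S^\lambda * e'_\lambda + \tilde S^\lambda *\bigl(\sum_{\mu\preceq\lambda,\ \mu\not<\lambda,\ \mu\neq\lambda} e'_\mu\bigr)$. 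One must check this last stray sum is empty, i.e. that $\mu\preceq\lambda\dwna$ and $\mu\neq\lambda$ forces $\mu<\lambda$ in the order $\leq$ on $B$-partitions; this is the analogue of the step ``$\mu\prec_p\lambda$, $\mu\neq\lambda$ $\Rightarrow$ $\mu<\lambda$'' used silently in Theorem~\ref{SymIdempotentTheorem}, and it holds because refining a partition strictly increases the length, and $\leq$ sorts first by decreasing length. Hence $\tilde S^\lambda *\bigl(S_n-\sum_{\mu<\lambda}e'_\mu\bigr) = 2^k\prod_j m_j!\cdot e'_\lambda$, and dividing by $2^k\prod_j m_j!$ yields the recursion, so $e'_\lambda=e_\lambda$.

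The main obstacle I anticipate is not any single hard computation but the careful matching of combinatorial prefactors: one has to be scrupulous that the $2^p\prod_j m_j!$ in Proposition~\ref{BSymIntProdSandZ}, the $2^k\prod_j m_j!$ in the recursion, and the $1/\prod_j m_j!$ in the closed form are mutually consistent, and in particular that the multiplicities $m_j$ are everywhere counted over $(\lambda_1,\ldots,\lambda_k)$ only, ignoring the distinguished part $\lambda_0$ (which is why $\tilde\zeta_{\lambda_0}$, unlike the ordinary $\zeta$'s, never contributes a symmetry factor). A secondary, purely order-theoretic point to nail down is the equivalence, for $B$-partitions, between ``$\mu$ strictly refines $\lambda$'' and ``$\mu<\lambda$'', which legitimizes replacing $\sum_{\mu\neq\lambda}$ by $\sum_{\mu<\lambda}$ exactly as in the type $A$ case; once these bookkeeping issues are dispatched, the argument is a line-by-line transcription of the proof of Theorem~\ref{SymIdempotentTheorem}.
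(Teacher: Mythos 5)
Your proposal is correct and follows essentially the same route as the paper's own proof: define $e'_\lambda$ as the right-hand side of \eqref{DefinitionBSymIdempotents}, read off $S_n=\sum_\mu e'_\mu$ from \eqref{BSymtZ}, apply Proposition~\ref{BSymIntProdSandZ} to get $\tilde S^\lambda * e'_\lambda = 2^k\prod_j m_j!\cdot e'_\lambda$ and to collapse $\sum_{\mu\neq\lambda}$ to $\sum_{\mu<\lambda}$, and conclude by uniqueness of the recursion. You even make explicit the order-theoretic step (refinement implies precedence in $<$) that the paper leaves implicit.
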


\Proof
Let $e'_\lambda$ be the right-hand side of the above equation.
By Proposition \ref{BSymIntProdSandZ},
\begin{align}
\tilde S^\lambda * e'_\lambda
= \tilde S^\lambda * \frac1{\prod_j m_j!} \tilde \zeta_{\lambda_0}
  \zeta_{\lambda_1} \cdots \zeta_{\lambda_k}
= \left({2^k\prod_j m_j!}\right) e'_\lambda.
\end{align}
By Equation~\eqref{BSymtZ}, $S_n = \sum e'_\lambda$, where the sum
ranges over all $B$-partitions $\lambda$ of $n$. Together with the
above and Proposition~\ref{BSymIntProdSandZ}, we have
\begin{align}
\left(2^k \prod_j m_j!\right) e'_\lambda
= \tilde S^\lambda * e'_\lambda
= \tilde S^\lambda * \left( S_n - \sum_{\mu\neq\lambda} e'_\mu \right)
= \tilde S^\lambda * \left( S_n - \sum_{\mu<\lambda} e'_\mu \right).
\end{align}
Since the $e'_\lambda$ satisfy the same induction as the $e_\lambda$,
they are equal.
\qed

\section{Idempotents in the higher order peak algebras}

Let $q$ be a primitive $r$-th root of unity. 
We denote by $\theta_q$ the endomorphism of $\Sym$ defined by
\begin{equation}
\tilde f =\theta_q(f)=f((1-q)A)=f(A)*\sigma_1((1-q)A)\,.
\end{equation}
We denote by $\PP^{(r)}$ the image of $\theta_q$ and by $\GP^{(r)}$ the right
$\PP^{(r)}$-module generated by the $S_n$ for $n\ge 0$.
Note that $\PP^{(r)}$ is by definition a left $*$-ideal of $\Sym$.
For $r=2$, it is the classical peak ideal, and $\GP^{(2)}$ is the unital peak
algebra. For general $r$, $\PP^{(r)}$ is the higher order peak algebra of
\cite{KT} and $\GP^{(r)}$ is its unital extension defined in \cite{ANT}. These
objects depend only on $r$, and not on the choice of the primitive root of
unity.
Bases of $\GP^{(r)}$ can be labeled by $r$-peak compositions
$I=(i_0;i_1,\ldots,i_p)$, with at most one part $i_0$ divisible by $r$.

\subsection{The radical}
\label{PeakAlgebraRadical}

By definition, $\GP_n^{(r)}$ is a $*$-subalgebra of $\NCSF_n$.
The radical of $\NCSF_n$ consists of those elements whose commutative image is
zero (see \cite{NCSF2}, Lemma 3.10). The radical of  $\GP_n^{(r)}$ is
therefore spanned by the $S_{i_0}\cdot\theta_q(S^I-S^{I'})$ such that $I'$
is a permutation of $I$. Indeed, the quotient of $\GP_n^{(r)}$ by the span of
those elements is a semi-simple commutative algebra, the $*$-subalgebra of
$Sym_n$ spanned by the $p_\lambda$ ($\lambda\vdash n$) such that at most one
part of $\lambda$ is multiple of $r$. This special part will be denoted by
$\lambda_0$.

We denote by $P_n^{(r)}$ the subset of partitions of $n$ with at most
one part divisible by $r$. The simple $\GP_n^{(r)}$-modules, and the principal
idempotents, can therefore be labeled by  $P_n^{(r)}$.

\subsection{An induction for the idempotents}

Define a total order $\ordr$ on $P_n^{(r)}$ as follows: sort the partitions
by decreasing length, and sort partitions of the same length by reverse
lexicographic order. For example,
\begin{gather}
P_5^{(2)} = [11111,\ 2111,\ 311,\ 41,\ 23,\ 5], \\
P_7^{(2)} = [1111111,\ 211111,\ 31111,\ 4111,\ 2311,\ 511,\ 331,\ 61,\ 43,\
		25,\ 7].
\end{gather}

Now, set
\begin{equation}
\er_{1^n} := \frac1{n!}S_1^n,
\end{equation}
and define by induction
\begin{equation}
\label{erInduction}
\er_\lambda := \frac{1}{m_\lambda}\ T^\lambda * 
		\left(S_n - \sum_{\mu \ordr \lambda} \er_\mu\right)
\end{equation}
where $T_m = R_m$ if $r\div m$, $T_m = R_{r^{i} j}$ if $r \ndiv m$ and $m =
ir+j$ with $0<j<r$, and $T^\lambda = T_{\lambda_0} T_{\lambda_1} \cdots
T_{\lambda_p}$ for $\lambda=(\lambda_0;\lambda_1,\dots,\lambda_p) \in
P_n^{(r)}$. It follows from \cite[Cor. 3.17]{KT} and from the definition
of $\GP^{(r)}$ that $T^\lambda\in\GP^{(r)}$.

We want to prove that $(\er_\lambda)_{\lambda\in P_n^{(r)}}$
is a complete system of orthogonal idempotents for  $\GP^{(r)}$.

To this aim, we introduce the sequence of (left) Zassenhaus idempotents of
level $r$ $\zeta_n^{(r)}$ as the unique solution of the equation
\begin{equation}
\label{GenFunZetaR}
\sigma_1 =
\left(\sum_{p\geq0} \zeta_{pr}^{(r)}\right)
\prod_{i\geq1,\ r\,\ndiv\, i}^\leftarrow e^{\zeta_i^{(r)}}.
\end{equation}
Note that $\zeta^{(r)}_n = \zeta_n$ for $n<2r$.

For example, for $r=2$, 
\begin{gather}
\zeta_1^{(2)} = S_1\ ;\ \zeta_2^{(2)} = S_2 -\frac12 S^{11}\ ; \
\zeta_3^{(2)} = S_3 - S^{21} + \frac13 S^{111},\\
\zeta_4^{(2)} = S_4 - S^{31} + \frac12 S^{211} - \frac18 S^{1111},\\
\zeta_5^{(2)} = S_5 - S^{41} + \frac12 S^{311} - S^{23} + S^{221}
- \frac12 S^{2111} 
+ \frac12 S^{113} - \frac12 S^{1121} + \frac15 S^{11111}.
\end{gather}
And for $r=3$,
\begin{gather}
\zeta_1^{(3)} = S_1\ ;\ \zeta_2^{(3)} = S_2 -\frac12 S^{11}\ ; \
\zeta_3^{(3)} = S_3 - S^{21} + \frac13 S^{111}, \\
\zeta_4^{(3)} = S_4 - S^{31} - \frac12 S^{22} +\frac34S^{211} + \frac14 S^{112}
- \frac14 S^{1111}, \\
\zeta_5^{(3)} = S_5 - S^{41} - S^{32} + S^{311} + S^{212} - \frac23 S^{2111}
            - \frac13 S^{1112} + \frac15 S^{11111},
\end{gather}
\begin{align}
\notag
\zeta_6^{(3)} 
=&\
S_{6}
- S^{51}
- S^{42}
+ S^{411}
+ S^{312}
- \frac23 S^{3111}
+ \frac13 S^{222}
- \frac16 S^{2211}
\\ &
- \frac23 S^{2112}
+ \frac38 S^{21111}
- \frac16 S^{1122}
+ \frac{1}{12} S^{11211}
+ \frac{5}{24} S^{11112}
- \frac19 S^{111111}.
\end{align}

Define now for 
$\lambda=(\lambda_0; \lambda_1, \ldots, \lambda_k) \in P_n^{(r)}$,
\begin{equation}
{e'}_\lambda^{(r)} :=
 \frac1{m_\lambda} \
 \Zr_{\lambda_0} \Zr_{\lambda_1} \cdots \Zr_{\lambda_k}.
\end{equation}
We will show that ${e'}_\lambda^{(r)} = \er_\lambda$ for all $\lambda\in
P^{(r)}_n$. We begin with two lemmas.

\begin{lemma}
\begin{align}
\label{coprodZr}
\Delta\left(\Zr_n\right) =
\begin{cases}
1 \tensor \Zr_n + \Zr_n \tensor 1, & \text{if } r \ndiv n, \\
\sum_{i=0}^{n/r} \Zr_{ir} \tensor \Zr_{n-ir}, & \text{if } r \div n. \\
\end{cases}
\end{align}
\end{lemma}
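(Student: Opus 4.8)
The plan is to read off the coproduct of $\Zr_n$ directly from the defining generating series \eqref{GenFunZetaR}, using the compatibility of $\Delta$ with the external product and the fact that the exponential of a primitive element is grouplike. First I would observe that the left-hand side $\sigma_1=\sum_n S_n$ is grouplike, and that each factor $e^{\zeta_i^{(r)}}$ with $r\nmid i$ is grouplike provided $\zeta_i^{(r)}$ is primitive; likewise the single "divisible" factor $Z^{(r)}:=\sum_{p\ge0}\Zr_{pr}$ must be grouplike since it is the product of $\sigma_1$ by the inverses (for the external product) of the grouplike factors $e^{\zeta_i^{(r)}}$. So the real content is an induction on $n$ establishing simultaneously that $\zeta_i^{(r)}$ is primitive for $r\nmid i$ and that $Z^{(r)}$ is grouplike, i.e.\ $\Delta(Z^{(r)})=Z^{(r)}\otimes Z^{(r)}$, which expands degree by degree to $\Delta(\Zr_n)=\sum_{i=0}^{n/r}\Zr_{ir}\otimes\Zr_{n-ir}$.

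The induction runs as follows. In degree $1$ we have $\zeta_1^{(r)}=S_1$ (assuming $r>1$), which is primitive. Assume the claim holds in all degrees $<n$. Comparing the degree-$n$ components of $\sigma_1=Z^{(r)}\cdot\prod^{\leftarrow}e^{\zeta_i^{(r)}}$ expresses $S_n$ as $\zeta_n^{(r)}$ (when $r\mid n$ this is the degree-$n$ part of $Z^{(r)}$; when $r\nmid n$ it is $\zeta_n^{(r)}$ coming from the corresponding exponential factor) plus a sum of products of strictly lower-degree $\zeta$'s. Applying $\Delta$ to this relation and using $*$-multiplicativity together with the inductive hypothesis (so that every factor of lower degree is either primitive or, in the divisible case, has the stated coproduct), all the cross terms on the right-hand side organize themselves, after subtracting, into exactly the grouplike/primitive expression claimed for the degree-$n$ term. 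Concretely, for $r\nmid n$ one checks that $\Delta(S_n)-(\text{coproduct of the lower-degree product terms})$ reduces to $1\otimes\zeta_n^{(r)}+\zeta_n^{(r)}\otimes 1$; for $r\mid n$ it reduces to $\sum_{i}\Zr_{ir}\otimes\Zr_{(n/r-i)r}$. This is the standard Friedrichs-type argument and is essentially formal once the bookkeeping is set up.

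Alternatively — and this is probably the cleanest write-up — I would package the whole thing as: since $\sigma_1$ is grouplike and each $e^{\zeta_i^{(r)}}$ is grouplike (the $\zeta_i^{(r)}$, $r\nmid i$, being primitive by the same argument that makes the ordinary Zassenhaus idempotents primitive, cf.\ the discussion before Lemma~\ref{lem-SetZ}), the remaining factor $Z^{(r)}=\sum_{p\ge 0}\Zr_{pr}$ is a quotient of grouplike elements in the group of grouplike series, hence grouplike; extracting homogeneous components gives the second case of \eqref{coprodZr}, and primitivity of $\zeta_n^{(r)}$ for $r\nmid n$ gives the first.

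\textbf{Main obstacle.} The one genuine point to get right is the primitivity of $\zeta_i^{(r)}$ for $r\nmid i$ and the grouplikeness of $Z^{(r)}$ \emph{simultaneously}: these are not independent, since the defining series intertwines the divisible and non-divisible factors, so the induction must carry both statements together rather than proving one and then the other. Everything else is formal manipulation with grouplike series and the $*$-multiplicativity $\Delta(f*g)=\Delta(f)*\Delta(g)$, which holds by \cite[Prop. 5.5]{NCSF1}.
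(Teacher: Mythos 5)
Your argument is correct, and the overall strategy---reading the coproduct off the defining factorization \eqref{GenFunZetaR} via multiplicativity of $\Delta$ and the group structure on grouplike series---is the same as the paper's. The difference lies in how the two interlocking claims (primitivity of $\zeta_i^{(r)}$ for $r\nmid i$, and grouplikeness of $Z^{(r)}=\sum_{p\ge0}\zeta^{(r)}_{pr}$) are established. You run a simultaneous induction on the degree, which works but requires the degree-by-degree bookkeeping you only sketch. The paper sidesteps this entirely by introducing an auxiliary factorization $\sigma_1 = \prod^\rightarrow_p e^{Y_{rp}}\,\prod^\leftarrow_{r\nmid i} e^{Y_i}$ into ordered exponentials only: the standard Zassenhaus argument then applies verbatim to show that \emph{all} the $Y_i$ are primitive, and uniqueness of the factorization identifies $\sum_p\zeta^{(r)}_{pr}$ with the grouplike product $\prod^\rightarrow_p e^{Y_{rp}}$ and $\zeta^{(r)}_i$ with $Y_i$ for $r\nmid i$. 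This removes the ``main obstacle'' you correctly identify---the two statements no longer need to be carried together---at the cost of introducing the auxiliary elements $Y_{rp}$. Note that your ``cleanest write-up'' paragraph, as literally stated, contains a small circularity: it invokes primitivity of the $\zeta_i^{(r)}$ ``by the same argument as for the ordinary Zassenhaus idempotents'' before the grouplikeness of $Z^{(r)}$ is known, even though the defining series intertwines the two (exactly the interdependence you flag). So either keep the inductive version, or adopt the paper's $Y$-device, which is the cleaner of the two.
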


\Proof
This means that the $\Zr_n$ are primitive if $r\div n$ and
that the generating series $\sum_{p\geq0}\Zr_{rp}$ is grouplike. If
we define new elements $Y_p$ by 
\begin{equation}\label{defY}
\sigma_1 =: \prod^\rightarrow_p e^{Y_{rp}}
            \prod_{r\,\ndiv\, i}^\leftarrow e^{Y_{i}},
\end{equation}
the standard argument showing that the Zassenhaus elements are primitive shows
as well that all the $Y_i$ are primitive. Now identify the first product in
the right-hand side with the generating series $\sum_{p\geq0}\Zr_{rp}$. Then
$\Zr_i = Y_i$ if $r \ndiv i$.
Since the exponential of a primitive element is grouplike and a product of
grouplike series is  group-like, both products in the right-hand side above
are grouplike. By identification, $\sum_{p\geq0}\Zr_{rp}$ is grouplike.
\qed

\begin{lemma}
\label{lem-TetZr}
Let $\lambda=(\lambda_0; \lambda_1, \ldots, \lambda_k) \in P_n^{(r)}$ and
$I = (i_0, i_1, \ldots, i_p)$ be an $r$-peak composition of $n$.
Then,
\begin{equation}
T^I * \Zr_{\lambda_0} \Zr_{\lambda_1} \cdots \Zr_{\lambda_k} 
= 
\begin{cases}
0, & \text{if } I\dwna \ordr \lambda, \\
m_{I}\ \Zr_{i_0} \Zr_{i_1} \cdots \Zr_{i_k}, & \text{if } I\dwna=\lambda.
\end{cases}
\end{equation}
\end{lemma}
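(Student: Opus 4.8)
The plan is to mimic the proof of Lemma~\ref{lem-SetZ}, using the splitting formula together with the coproduct structure of the $\Zr_n$ described in the preceding lemma. Write $I=(i_0,i_1,\ldots,i_p)$ and recall that $T^I = T_{i_0}T_{i_1}\cdots T_{i_p}$, where $T_{i_0}=R_{i_0}$ (since $r\div i_0$) and each $T_{i_l}$ for $l\geq1$ equals $R_{r^a j}$ with $i_l = ar+j$, $0<j<r$. Since $R_{r^a j}$ is, up to sign, the internal product of the $S_m$'s with the appropriate ribbon shape, the essential point is to compute $T_m * (\text{product of }\Zr\text{'s})$. By the splitting formula,
\begin{equation}
T^I * \Zr_{\lambda_0}\Zr_{\lambda_1}\cdots\Zr_{\lambda_k}
= \mu_{p+1}\left[(T_{i_0}\otimes T_{i_1}\otimes\cdots\otimes T_{i_p}) *_{p+1} \Delta^{p+1}\left(\Zr_{\lambda_0}\Zr_{\lambda_1}\cdots\Zr_{\lambda_k}\right)\right],
\end{equation}
so I need to expand the iterated coproduct of the product using Lemma~\ref{coprodZr}: the factors $\Zr_{\lambda_l}$ with $r\ndiv\lambda_l$ are primitive and get distributed as single tensor legs, while the unique non-primitive factor $\Zr_{\lambda_0}$ (with $r\div\lambda_0$) splits as $\sum \Zr_{a_0 r}\otimes\cdots\otimes\Zr_{a_p r}$ over the tensor legs.

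Next I would argue, leg by leg, which terms survive. For a leg carrying the ribbon $T_{i_l}$ with $r\ndiv i_l$, the input from the coproduct is a product of some primitive $\Zr$'s (a subword of the $r\nmid$ parts of $\lambda$) times a single $\Zr_{a_l r}$; one shows $T_{i_l}*(\cdots)$ vanishes unless this input is homogeneous of degree $i_l$ and in fact consists of the "right" parts, giving a multiple of $\Zr_{i_l}$ exactly when the multiset of parts matches; here the key sub-fact is $R_{r^a j} * (\Zr_{ar}\Zr_j) = \Zr_{i_l}$ up to the relevant constant (this is precisely how $\zeta^{(r)}_{i_l}$ was defined through \eqref{GenFunZetaR}, so it amounts to reading off the generating function). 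For the leg carrying $T_{i_0}=R_{i_0}$, the analogous computation gives $R_{i_0}*(\text{a product of }r\mid\text{-parts summing to }i_0) = \Zr_{i_0}$ when the parts match. Combining all legs: a nonzero contribution forces each part of $\lambda$ to be wholly absorbed into some $i_l$ and, because at most one part of each of $\lambda$ and $I$ is divisible by $r$ (so the $r\mid$ parts cannot be split across legs except trivially), this forces $\lambda\preceq I\dwna$; hence $T^I*(\cdots)=0$ whenever $I\dwna\ordr\lambda$ in the total order, which refines $\preceq$ in the appropriate direction. When $\lambda=I\dwna$ the only surviving terms come from matching each $\Zr_{\lambda_l}$ to a leg with the same part, and counting the ways to do this among equal parts produces the factor $m_I$ (the product of factorials of multiplicities of the parts $i_1,\ldots,i_p$; the special part $i_0$ contributes no factorial, consistent with the convention fixed in Proposition~\ref{BSymIntProdSandZ}), yielding $m_I\,\Zr_{i_0}\Zr_{i_1}\cdots\Zr_{i_k}$.

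I expect the main obstacle to be the bookkeeping in the mixed leg computation $R_{r^a j} * (\text{primitive }\Zr\text{'s} \cdot \Zr_{br})$: one must check that the only way to get something nonzero (and then exactly $\Zr_{i_l}$) is $b=a$ with no extra primitive factors, and that no "cross terms" survive in which primitive $\Zr$'s from distinct original factors land on the same leg. This is really the assertion that the expansion defining $\Zr_m$ via \eqref{GenFunZetaR} is triangular with respect to the refinement order with the stated leading/vanishing behavior — analogous to the remark before Lemma~\ref{lem-SetZ} that $\zeta_n\equiv S_n$ modulo smaller terms. I would isolate this as the computational heart of the argument, proving it directly from \eqref{GenFunZetaR} by the same primitivity/triangularity reasoning used for ordinary Zassenhaus idempotents, and then the rest of the lemma follows by assembling the legs exactly as in Lemma~\ref{lem-SetZ} and Proposition~\ref{BSymIntProdSandZ}.
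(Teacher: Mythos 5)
Your setup (splitting formula plus the coproduct rule \eqref{coprodZr} for the $\Zr_n$) matches the paper's, but the step you yourself flag as ``the computational heart'' does not work, and it is not how the paper proceeds. The sub-fact you rely on, namely that $R_{r^a j}*(\Zr_{ar}\Zr_j)$ equals $\Zr_{ar+j}$ up to a constant and can be ``read off'' from \eqref{GenFunZetaR}, is false: already for $r=2$ (where $\Zr_n=\zeta_n$ for $n\le 3$) one has
\begin{equation*}
R_{21}*(\zeta_2\zeta_1)=S^{21}*(\zeta_2\zeta_1)-S^{3}*(\zeta_2\zeta_1)
=\zeta_2\zeta_1-\zeta_2\zeta_1=0,
\end{equation*}
which is not a nonzero multiple of $\zeta_3$; and \eqref{GenFunZetaR} is a factorization of $\sigma_1$ for the \emph{external} product, so it encodes no internal products with ribbons. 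Your leg-by-leg vanishing claim is also too strong as stated: a single leg such as $S_{i_l}*(\Zr_{j_1}\Zr_{j_2})$ with $j_1+j_2=i_l$ equals $\Zr_{j_1}\Zr_{j_2}\neq 0$, so an individual leg does not force its content to be a single matching part. The vanishing in the first case of the lemma is a global consequence of the total order $\ordr$ prioritizing length, not of leg-wise rigidity.

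What the paper does instead, and what your argument is missing, is a reduction that avoids computing any internal product with a ribbon. For an arbitrary product $F^I=F_{i_1}\cdots F_{i_p}$ with $F_{i_j}\in\Sym_{i_j}$, formula \eqref{Fepr} shows that a summand dies as soon as two of the $\lambda^{(j)}$ are empty: an empty leg contributes $F_{i_j}*\Zr_{ra_j}$, which forces $i_j$ to be a positive multiple of $r$, and at most one part of $I$ is divisible by $r$. Hence $F^I*{\Zr}^\lambda=0$ whenever $\ell(I)>\ell(\lambda)$, \emph{for any} homogeneous factors $F_{i_j}$. Since $I\dwna\ordreq\lambda$ forces $\ell(I)\ge\ell(\lambda)$, this kills the strictly-longer case outright, and in the equal-length case it shows that all the terms $S^K$ with $\ell(K)>\ell(I)$ occurring in the expansion of $T^I$ annihilate ${\Zr}^\lambda$, so that $T^I*{\Zr}^\lambda=S^I*{\Zr}^\lambda$. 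Only then does one compute with $S^I$, where each factor acts as a homogeneous projection, and the case analysis on whether $\lambda$ has a part divisible by $r$ yields the dichotomy and the constant $m_I$. To salvage your route you would have to establish the vanishing properties of the ribbons $R_{r^a j}$ against products of $\Zr$'s directly, which is precisely the hard content you deferred; the length-counting reduction above makes that unnecessary.
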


\Proof
To simplify the notation, we let ${\Zr}^\lambda = \Zr_{\lambda_0}
\Zr_{\lambda_1} \cdots \Zr_{\lambda_k}$ for $\lambda\in P_n^{(r)}$.
If $F^I = F_{i_1} \cdots F_{i_p}$ with each $F_{i_j} \in \Sym_{i_j}$, the
splitting formula and \eqref{coprodZr} yield
\begin{align}
\label{Fepr}
F^I * {\Zr}^\lambda = 
\sum_{
\substack{
\lambda^{(1)} \vee \cdots \vee \lambda^{(p)} = (\lambda_1, \dots,
\lambda_k)\\
ra_1 + \cdots + ra_p = \lambda_0}
} 
\prod_{j=1}^p \left(F_{i_j} * \Zr_{ra_j} {\Zr}^{\lambda^{(j)}}\right)
\end{align}
where $\lambda_0$ is the part (possibly $0$) of $\lambda$ that is divisible
by $r$, and where $\alpha\vee\beta$ denotes the partition obtained by
reordering the concatenation of the partitions $\alpha$ and $\beta$.

Observe that since at most one $i_j$ is divisible by $r$, a product in the
above summation is $0$ if at least two of the partitions $\lambda^{(1)},
\ldots, \lambda^{(p)}$ are empty. If $\ell(I) > \ell(\lambda)$, then $k
\leq p - 2$, so this hypothesis is always satisfied. Thus, 
\begin{align}
\label{FeprZero}
F^I * {\Zr}^\lambda 
    = 0 \text{ if } \ell(I) > \ell(\lambda).
\end{align}

Suppose $I\dwna \ordreq \lambda$. Then, by definition of the order,
$\ell(I) \geq \ell(\lambda)$. Hence, for $I$ such that $I\dwna \ordreq
\lambda$ and $\ell(I) > \ell(\lambda)$, the result follows by taking $F^I =
T^I$ in \eqref{FeprZero}. So suppose instead that $\ell(I) = \ell(\lambda)$. 
By definition, $T^I = T_{i_1}\cdots T_{i_p}$, where $T_m = R_m$ if $r\div
m$ and $T_m = R_{r^{i} j}$ if $m = ir+j$ with $0<j<r$. Since $R_J$ can be
written as a linear combination of $S^K$ for which $J$ is a refinement of
$K$, it follows that $T^I$ is equal to $S^I$ plus a linear combination of
$S^K$ with $\ell(K) > \ell(I)$. By taking $F^I = S^K$ in \eqref{FeprZero},
it follows that $T^I * {\Zr}^\lambda = S^I * {\Zr}^\lambda$.

It remains to show that, for $I$ and $\lambda$ of the same length, $S^I *
{\Zr}^\lambda = m_I {\Zr}^\lambda$ if $I\dwna = \lambda$ and is $0$
otherwise. If $\lambda$ contains no part divisible by $r$, then it follows
from \eqref{Fepr} that if $S^I * {\zeta^{(r)}}^\lambda \neq 0$, we must
have $I\dwna = \lambda$, in which case $S^I * {\Zr}^\lambda = m_I
{\Zr}^\lambda$.

Suppose instead that $\lambda$ contains a part that is divisible by $r$.
Then each decomposition $(\lambda^{(1)}, \ldots, \lambda^{(p)})$ in
\eqref{Fepr} contains at least one $\lambda^{(j)} = \emptyset$. Thus, if
$I$ contains no part that is divisible by $r$, then $S^I * {\Zr}^\lambda =
0$. Otherwise, the part of $I$ that is divisible by $r$ is bounded by
$\lambda_0$. This implies that $\lambda \ordreq I\dwna$. Since we began by
assuming that $I\dwna \ordreq \lambda$, it follows that $I\dwna = \lambda$,
and the result follows from \eqref{Fepr} as before.
\qed

\begin{theorem}
For all partitions $\lambda=(\lambda_0; \lambda_1,
\ldots, \lambda_k) \in P_n^{(r)}$,
\begin{equation}
\label{DefinitionPeakIdempotents}
e_\lambda^{(r)} = {e'}_\lambda^{(r)} := 
 \frac1{m_\lambda} \
 \Zr_{\lambda_0} \Zr_{\lambda_1} \cdots \Zr_{\lambda_k}.
\end{equation}
\end{theorem}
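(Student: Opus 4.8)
The strategy mirrors the proofs of Theorem~\ref{SymIdempotentTheorem} and Theorem~\ref{BSymIdempotentTheorem}: show that the elements ${e'}_\lambda^{(r)}$ satisfy the same recursion~\eqref{erInduction} that defines the $\er_\lambda$, and conclude by uniqueness of the solution of that recursion. The two facts I need are the analogue of Lemma~\ref{lem-SetZ} for type $A$, which is exactly Lemma~\ref{lem-TetZr}, and the analogue of the expansion~\eqref{SenZ}, namely that $S_n = \sum_{\lambda\in P_n^{(r)}} {e'}_\lambda^{(r)}$, which follows by extracting the degree-$n$ component of the generating series~\eqref{GenFunZetaR} after expanding each $e^{\zeta_i^{(r)}}$ and the grouplike factor $\sum_p \zeta_{pr}^{(r)}$: a monomial $\Zr_{\lambda_0}\Zr_{\lambda_1}\cdots\Zr_{\lambda_k}$ with $r\nmid\lambda_j$ for $j\ge 1$ appears with coefficient $1/\prod_{j\ge1}m_j(\lambda) ! = 1/m_\lambda$ (the partition $(\lambda_1,\dots,\lambda_k)$ being recovered from the ordered product by the standard exponential bookkeeping, and $\lambda_0$ coming linearly from the grouplike factor).

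First I would fix $\lambda\in P_n^{(r)}$ and apply Lemma~\ref{lem-TetZr} with $I\dwna = \lambda$ to get
\begin{equation}
T^\lambda * {e'}_\lambda^{(r)} = \frac{1}{m_\lambda}\, T^\lambda * \Zr_{\lambda_0}\cdots\Zr_{\lambda_k} = m_\lambda\, {e'}_\lambda^{(r)}.
\end{equation}
Next, using $S_n = \sum_{\mu\in P_n^{(r)}} {e'}_\mu^{(r)}$, write
\begin{equation}
m_\lambda\, {e'}_\lambda^{(r)} = T^\lambda * \Bigl(S_n - \sum_{\mu\neq\lambda}{e'}_\mu^{(r)}\Bigr),
\end{equation}
and then observe that the terms with $\mu \ordr \lambda$ can be dropped: for those $\mu$, Lemma~\ref{lem-TetZr} gives $T^\lambda * {e'}_\mu^{(r)} = 0$ unless $\mu\dwna = \lambda$; but since $\mu$ is already a partition in $P_n^{(r)}$ this would force $\mu = \lambda$ — wait, more carefully: Lemma~\ref{lem-TetZr} with $I\dwna = \lambda$ states that $T^\lambda * {\Zr}^\mu = 0$ whenever $\lambda \ordr \mu$, and equals $m_\lambda {\Zr}^\lambda$ only when $\mu = \lambda$. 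So for $\mu$ ranging over partitions with $\lambda \ordr \mu$ (equivalently, the $\mu$ \emph{not} in the truncated sum), the contribution vanishes; hence $\sum_{\mu\neq\lambda}T^\lambda*{e'}_\mu^{(r)} = \sum_{\mu \ordr \lambda} T^\lambda*{e'}_\mu^{(r)}$, giving
\begin{equation}
m_\lambda\, {e'}_\lambda^{(r)} = T^\lambda * \Bigl(S_n - \sum_{\mu \ordr \lambda}{e'}_\mu^{(r)}\Bigr) = m_\lambda\,\er_\lambda
\end{equation}
by induction on $\ordr$, with the base case ${e'}_{1^n}^{(r)} = \frac{1}{n!}(\Zr_1)^n = \frac{1}{n!}S_1^n = \er_{1^n}$.

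The step I expect to require the most care is verifying the decomposition $S_n = \sum_{\lambda\in P_n^{(r)}}{e'}_\lambda^{(r)}$ with precisely the coefficients $1/m_\lambda$: one must check that the special part $\lambda_0$ really does pick up coefficient $1$ from the grouplike series $\sum_{p\ge0}\Zr_{rp}$ (which contributes each $\Zr_{rp}$ exactly once, placed leftmost), while the non-multiples-of-$r$ parts assemble into $(\lambda_1,\dots,\lambda_k)$ with the usual multinomial factor $1/m_\lambda$ coming from the ordered product of exponentials — this is the content of Lemma~\ref{coprodZr} being compatible with \eqref{GenFunZetaR}, and is the exact analogue of~\eqref{SenZ} and~\eqref{BSymtZ}. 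Everything else is a formal consequence of Lemma~\ref{lem-TetZr} and the triangularity it encodes, exactly as in the type $A$ and type $B$ cases already treated. \qed
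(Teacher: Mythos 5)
Your proposal is correct and follows essentially the same route as the paper: establish $S_n=\sum_{\lambda\in P_n^{(r)}}{e'}_\lambda^{(r)}$ from the generating series \eqref{GenFunZetaR}, apply Lemma~\ref{lem-TetZr} with $I\dwna=\lambda$ to get $T^\lambda*{e'}_\lambda^{(r)}=m_\lambda{e'}_\lambda^{(r)}$ and to kill the terms with $\lambda\ordr\mu$, and conclude that the ${e'}_\lambda^{(r)}$ satisfy the induction \eqref{erInduction}. Your self-correction lands on the right reading of the lemma, and the extra detail you supply on the coefficient $1/m_\lambda$ in the expansion of $S_n$ is exactly what the paper leaves implicit.
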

\Proof
From the definition of $\Zr_m$, it follows that $S_n =
\sum_{\lambda\in P_n^{(r)}} \epr_\lambda$. Hence, by Lemma \ref{lem-TetZr},
\begin{align*}
m_\lambda \epr_\lambda 
= T^\lambda * e'_\lambda 
= T^\lambda * \left(S_n - \sum_{\mu \neq \lambda} \epr_\mu \right)
= T^\lambda * \left(S_n - \sum_{\mu \ordr \lambda} \epr_\mu \right).
\end{align*}
Thus, the elements $\epr_\lambda$ and $e^{(r)}_\lambda$ satisfy the same
induction equation \eqref{erInduction}.
\qed

\begin{theorem}
The family $(e^{(r)}_\lambda)_{\lambda\in P_n^{(r)}}$ forms a complete system
of orthogonal idempotents for $\GP^{(r)}_n$.
\end{theorem}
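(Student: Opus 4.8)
The plan is to verify the three defining properties of a complete system of orthogonal idempotents — namely that each $e^{(r)}_\lambda$ is idempotent, that $e^{(r)}_\lambda * e^{(r)}_\mu = 0$ for $\lambda\neq\mu$, and that $\sum_{\lambda} e^{(r)}_\lambda = S_n$ is the unit of $(\GP^{(r)}_n,*)$ — using the explicit formula $e^{(r)}_\lambda = \frac1{m_\lambda}\Zr_{\lambda_0}\Zr_{\lambda_1}\cdots\Zr_{\lambda_k}$ established in the previous theorem. The completeness (sum equals $S_n$) is already immediate from the definition of the $\Zr_m$ via the generating series \eqref{GenFunZetaR}, exactly as used in the proof above: expanding $\sigma_1$ produces $S_n = \sum_{\lambda\in P_n^{(r)}}\epr_\lambda$. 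So the real content is orthogonality and idempotency.

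For orthogonality and idempotency, the key tool is a splitting-formula computation of $\epr_\lambda * \epr_\mu$ analogous to the type-$A$ Lemma (the one with parts (i)–(iii)) but now using the coproduct formula \eqref{coprodZr}: the $\Zr_i$ with $r\ndiv i$ are primitive, while $\sum_{p\ge0}\Zr_{rp}$ is grouplike. First I would prove a lemma stating that $\Zr^\mu * \Zr^\lambda = 0$ unless $\lambda$ is a rearrangement-type coarsening of $\mu$ compatible with the divisibility constraint, and that when $\lambda$ and $\mu$ have the same underlying partition (i.e. $\lambda = \mu$ as elements of $P_n^{(r)}$, since these are partitions not compositions) one gets $\Zr^\mu * \Zr^\lambda = m_\lambda \Zr^\lambda$. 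Concretely, writing $\Zr^\mu * \Zr^\lambda$ via the splitting formula, each factor is $\Zr_{\mu_j} * (\text{a piece of }\Delta^{(\cdot)}\Zr^\lambda)$; since $\Zr_{\mu_j}$ for $r\ndiv\mu_j$ is primitive it can only pair with a single $\Zr_{\lambda_i}$ (of equal size), and the unique grouplike part $\sum\Zr_{rp}$ absorbs the divisible-by-$r$ coproduct slack into the single special part $\lambda_0$. A length count then forces $\ell(\lambda)\ge\ell(\mu)$ for a nonzero result, and when lengths are equal the only surviving term matches $\mu$ to a rearrangement of $\lambda$, i.e. $\lambda=\mu$, giving the factor $m_\lambda$. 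Dividing by $m_\lambda m_\mu$ yields $e^{(r)}_\lambda * e^{(r)}_\mu = \delta_{\lambda\mu}\, e^{(r)}_\lambda$.

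Alternatively — and this may be cleaner — I would deduce orthogonality and idempotency directly from the induction \eqref{erInduction} together with Lemma~\ref{lem-TetZr}, bypassing a fresh computation of $\epr_\lambda*\epr_\mu$. Since $T^\lambda\in\GP^{(r)}$ and the $\epr_\mu$ lie in $\GP^{(r)}$, all the $e^{(r)}_\lambda$ lie in $\GP^{(r)}_n$; one then argues by induction on the order $\ordr$ that the partial sums $\sum_{\mu\ordreq\lambda}e^{(r)}_\mu$ are idempotent and that $e^{(r)}_\lambda$ is orthogonal to all earlier $e^{(r)}_\mu$, using $T^\lambda * e^{(r)}_\mu = 0$ for $\mu\ordr\lambda$ (a consequence of Lemma~\ref{lem-TetZr}, since $\mu\ordr\lambda$ forces $\lambda\dwna=\lambda\not\ordreq\mu$ in the relevant sense) and $T^\lambda * e^{(r)}_\lambda = m_\lambda e^{(r)}_\lambda$. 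This is the standard Saliola-style argument and essentially mirrors the reasoning already in the excerpt.

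The main obstacle I anticipate is bookkeeping around the single special part divisible by $r$: in the coproduct \eqref{coprodZr} the grouplike series $\sum_p\Zr_{rp}$ behaves differently from the primitive $\Zr_i$, so in the splitting-formula expansion one must carefully track which tensor factor receives the $\lambda_0$-piece and ensure that at most one of the $\mu_j$ is divisible by $r$ — this is where the $r$-peak condition is essential, and it is exactly the subtlety already handled in the proof of Lemma~\ref{lem-TetZr}. Once that lemma is in hand, the remaining steps are routine, so I would lean on Lemma~\ref{lem-TetZr} and the preceding theorem rather than redo the primitive-versus-grouplike analysis from scratch.
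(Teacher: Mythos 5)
Neither of your two routes matches the paper's proof, and each has a problem as written. Your first (direct) route has a genuine gap in the orthogonality step: the splitting-formula length count only kills $\Zr^{\mu}*\Zr^{\lambda}$ when $\ell(\lambda)<\ell(\mu)$, and the rearrangement analysis handles $\ell(\lambda)=\ell(\mu)$, but you never treat the case $\ell(\lambda)>\ell(\mu)$ (e.g.\ $\lambda$ strictly finer than $\mu$). In that case the expansion is a sum of products of elements of the form $\Gamma_K=\Zr_k*\Zr^{K}$ which are \emph{not} zero in general --- compare part (ii) of the type-$A$ lemma, which only places $\zeta^I*\zeta^J$ in the span of the $\zeta^K$ with $K\in\SG(J)$, and note that the nonvanishing of $e_\mu*e_I$ for suitable rearrangements $I$ of a finer partition is precisely what produces the off-diagonal Cartan invariants. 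The vanishing for the specific (partition) arrangement is a nontrivial cancellation that your sketch does not supply. Your second route misquotes the key lemma: Lemma~\ref{lem-TetZr} gives $T^\lambda*\epr_\mu=0$ when $\lambda\ordr\mu$, \emph{not} when $\mu\ordr\lambda$ (the latter statement is false in general, cf.\ the third case of Lemma~\ref{lem-SetZ}). The induction can nevertheless be repaired: assuming $\epr_\mu*\epr_\nu=\delta_{\mu\nu}\epr_\mu$ for all $\mu\ordr\lambda$, one gets $\epr_\lambda*\epr_\nu=\frac1{m_\lambda}\,T^\lambda*\bigl(\epr_\nu-\sum_{\mu\ordr\lambda}\delta_{\mu\nu}\epr_\mu\bigr)$, which vanishes for $\nu\ordr\lambda$ because the bracket is zero, equals $\epr_\lambda$ for $\nu=\lambda$, and vanishes for $\lambda\ordr\nu$ by the lemma; but this is not the argument you wrote.

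The paper's actual proof is different from both routes and avoids these issues entirely. It uses the auxiliary elements $Y_i$ of \eqref{defY}, which form a full sequence of Lie idempotents of $\Sym$, so that the $Y_\mu=\frac1{m_\alpha m_\beta}Y^\alpha Y^\beta$ (over all partitions $\mu=(\alpha;\beta)$ of $n$, with a fixed arrangement of the parts) constitute a complete system of orthogonal idempotents of $\NCSF_n$ by the general theory of \cite{NCSF2}. Since $\Zr_i=Y_i$ for $r\nmid i$ and $\sum_p\Zr_{rp}$ is the first ordered exponential in \eqref{defY}, each $e^{(r)}_\lambda$ is the sum of the $Y_\mu$ over a block of partitions, and these blocks are pairwise disjoint and exhaust all partitions of $n$; orthogonality, idempotency and completeness then all follow at once. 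You should either adopt this reduction or carry out the corrected induction above.
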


\begin{proof}
By construction, the $e^{(r)}_\lambda$ are in $\GP^{(r)}_n$. Their
identification with the ${e'}^{(r)}_\lambda$ shows that they are linearly
independent, and~(\ref{defY}) shows that they are orthogonal idempotents.
Indeed, the $Y_i$ are Lie idempotents of $\Sym$, and if we write partitions
$\mu$ of $n$ as $(\alpha;\beta)$, where the $\alpha_i$ are the parts divisible
by $r$ in increasing order and the $\beta_i$ the other parts in decreasing
order, then
\begin{equation}
Y_\mu = \frac1{m_\alpha m_\beta}Y^\alpha Y^\beta
\end{equation} 
is a complete set of orthogonal idempotents of $\NCSF_n$.
Hence, the $e^{(r)}_\lambda$, which are disjoint sums of the $Y_\mu$,
are orthogonal idempotents.
\end{proof}

\subsection{Cartan invariants}

Recall from subsection \ref{PeakAlgebraRadical} that the indecomposable
projective modules of $\GP^{(r)}_n$ can be labelled by $\mu\in P^{(r)}_n$.
For $\mu = (\mu_0; \mu_1, \ldots, \mu_p) \in P^{(r)}_n$, where $\mu_0$ is
is divisible by $r$, let $\bar\mu=(\mu_1,\ldots,\mu_p)$, and write
$\mu = (\mu_0; \bar\mu)$.

The description of the principal orthogonal idempotents in the previous
subsection shows that the dimension of the indecomposable projective module
labeled by $\mu$ is equal to the number of distinct permutations of
$\bar\mu$. We make the following conjecture:

\begin{conjecture}\label{conjcart}
The Cartan invariant $\dim(e^{(r)}_\nu * \GP^{(r)}_n * e^{(r)}_\mu)$ is the
number of permutations $\bar I$ of $\bar\mu$ for which the following
algorithm produces $\bar\nu$.
\begin{enumerate}
\item Compute the standardization $\tau=\Std(\bar I)$.
\item Replace the elements of the cycles of $\tau$ by the corresponding values in
      $\bar I$.
\item Take the sums of the values in each cycle, discarding those that are
      multiples of $r$.
\item The partition obtained by reordering these sums is $\bar\nu$.
\end{enumerate}
\end{conjecture}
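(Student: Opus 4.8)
The plan is to compute the Cartan invariant $c_{\nu\mu} = \dim(e^{(r)}_\nu * \GP^{(r)}_n * e^{(r)}_\mu)$ directly from the explicit idempotents $e^{(r)}_\lambda = \frac1{m_\lambda}\Zr_{\lambda_0}\Zr_{\lambda_1}\cdots\Zr_{\lambda_k}$, in exact parallel with the type $A$ argument of the earlier section. The key observation is that $\GP^{(r)}_n * e^{(r)}_\mu$ is spanned by the elements $e^{(r)}_\nu * x * e^{(r)}_\mu$ with $x$ running over a basis of $\GP^{(r)}_n$, and by the orthogonality of the idempotents this reduces to understanding the products $\Zr_{i_0}\Zr_{i_1}\cdots\Zr_{i_p} * \Zr_{\mu_0}\Zr_{\mu_1}\cdots\Zr_{\mu_q}$. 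So the first step is to establish a type-$B$/peak analogue of the structural Lemma~3.10-style result: using the splitting formula together with the coproduct formula \eqref{coprodZr}, expand $\zeta^{(r),I} * \zeta^{(r),J}$ and show that when $\ell(J) > \ell(I)$ the result lies in the radical and is a sum of products $\Gamma_{J_1}\cdots\Gamma_{J_p}$ where $\Gamma_K = \zeta^{(r)}_{|K|} * \zeta^{(r),K}$ is again a (primitive, except in the grouplike $r$-divisible direction) Lie-type element, while when $\ell(J)<\ell(I)$ it vanishes and when $\ell(J)=\ell(I)$ it is a nonzero multiple of $\zeta^{(r),I}$ precisely when $J$ rearranges $I$. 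The $r$-divisible part needs separate bookkeeping because $\sum_p \Zr_{rp}$ is grouplike rather than $\Zr_{rp}$ being primitive; this is exactly the feature that forces the ``at most one part divisible by $r$'' condition and it is what the standardization step in the algorithm is secretly encoding.

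Granting that structural lemma, the second step is to identify $e^{(r)}_\nu * \GP^{(r)}_n * e^{(r)}_\mu$ with a space of symmetrized Lie elements, just as the Garsia--Reutenauer description was recovered in type $A$. Concretely, $e^{(r)}_\nu * x * e^{(r)}_\mu$ is nonzero only when $x$ (a $T^I$ for an $r$-peak composition $I$) refines $\mu$ in the appropriate sense, and the nonzero such products span the span of the $\Gamma$-monomials of shape $\nu$ built from the Lie elements attached to the parts of $\mu$. So the dimension is the number of ways to fold the parts $\mu_1,\dots,\mu_p$ of $\bar\mu$ into groups whose sums — after discarding any group summing to a multiple of $r$ — reassemble into $\bar\nu$, \emph{counted with the multiplicity coming from the distinct rearrangements} of $\bar\mu$. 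The content of the conjectural algorithm is that this count is organized by: choosing a rearrangement $\bar I$ of $\bar\mu$, standardizing to get a permutation $\tau = \Std(\bar I)$, and reading off the cycle-type of $\tau$ (the cycles record which parts get grouped together, in a way compatible with the noncommutative shuffle coefficients $\<J, J_1\shuffle\cdots\shuffle J_r\>$), then summing values within cycles and discarding $r$-multiples. One would need to check that the passage from ``unordered groupings weighted by shuffle numbers'' to ``rearrangements $\bar I$ with prescribed cycle-sums of $\Std(\bar I)$'' is a bijection — this is a purely combinatorial reindexing, presumably analogous to the standard bijection between the descent-algebra setting and cycle types of permutations.

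The main obstacle I expect is controlling the radical filtration finely enough to count dimensions exactly rather than just up to an inequality. In type $A$ the clean statement $c_{\lambda\mu}(q) = q^{\ell(\lambda)-\ell(\mu)}$ followed because the $\Gamma$-monomials of a fixed shape were linearly independent and the grading by length matched the radical powers via the Bauval--Lenart / lower-central-series identification ${\mathcal R}^{*j} = \gamma^j(\Sym)$. Here one must verify: (a) that the analogous $\Gamma$-monomials for level $r$ are linearly independent — which should follow from the fact, established in the last theorem of the excerpt, that the $Y_i$ (equivalently the $\zeta^{(r)}_i$ together with the grouplike package $\sum\Zr_{rp}$) give idempotent bases, so products of distinct $Y$'s stay independent; and (b) that discarding sums divisible by $r$ is forced — i.e., a group of parts summing to a multiple of $r$ contributes a factor landing in the $r$-divisible grouplike direction, which gets absorbed rather than producing a new basis element, because $\Zr_{rp} * (\text{something})$ collapses by the grouplike coproduct. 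Pinning down (b) rigorously — showing the ``discarding'' is not an over- or under-count — is where I would expect the real work to lie, and it is plausibly why the statement is left as a conjecture rather than a theorem: the type-$B$ analogue of the Bauval--Lenart radical-power description is not quoted in the excerpt and may not be available in the form needed.
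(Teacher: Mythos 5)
First, a point of framing: the statement you were asked to prove is stated in the paper as a \emph{conjecture}. The authors do not prove it for general $r$; they prove it only for $r=2$, and by a route entirely different from yours. Their argument (Lemma \ref{lemma:zetaprojections} and Theorem \ref{thm:PeakInBSym}) identifies $\GP_n=\GP^{(2)}_n$ with the corner subalgebra $\varepsilon_n*\BSym_n*\varepsilon_n$ of the type $B$ descent algebra, where $\varepsilon_n$ is the sum of the principal idempotents $e_\lambda$ over $2$-peak partitions; this gives $e^{(2)}_\nu*\GP_n*e^{(2)}_\mu\cong e_\nu*\BSym_n*e_\mu$, and the Cartan invariants of $\BSym_n$ are already known combinatorially from \cite{NBergeron1992}, so the $r=2$ case follows by checking that the restriction of that known description to $2$-peak partitions coincides with the algorithm in the statement. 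No analogue of this embedding into an algebra with known Cartan data is available for $r\geq 3$, which is precisely why the general statement is left open.

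Your direct approach --- mimicking the Garsia--Reutenauer computation via a trichotomy lemma for ${\Zr}^I*{\Zr}^J$ and a count of independent $\Gamma$-monomials --- is a reasonable plan, but as written it is a plan rather than a proof, and the gaps you yourself flag are genuine and unresolved. (a) The trichotomy lemma is not established: because $\sum_p\Zr_{rp}$ is grouplike while the individual $\Zr_{rp}$ are not primitive, the splitting-formula expansion of ${\Zr}^I*{\Zr}^J$ produces cross terms in which the $r$-divisible part of $J$ is spread over several tensor factors, and the clean three-case analysis of Lemma \ref{lem-SetZ} does not carry over verbatim (compare the extra bookkeeping already needed in Lemma \ref{lem-TetZr}, which only treats the equal-length and comparable cases). (b) The exact dimension count requires both the linear independence of the relevant $\Gamma$-monomials and a proof that a group of parts summing to a multiple of $r$ is absorbed without over- or under-counting; you give no argument for either, and the identification of radical powers with the lower central series (the result of \cite{BL}, which underlies the type $A$ count --- not ``Bauval--Lenart'') has no stated analogue for $\GP^{(r)}_n$. (c) The reindexing from shuffle-weighted groupings to cycle-sums of $\Std(\bar I)$ is asserted, not proved. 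Since the authors themselves could not close these gaps for general $r$, you should present this as a strategy whose hard steps remain exactly the hard steps, and, if you want a complete argument for $r=2$, switch to the paper's corner-subalgebra route.
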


\noindent
We will prove this for the classical case ($r=2$) in the next section.
We will also prove that the matrix of the Cartan invariants of
$\GP_n^{(2)}$ is obtained from that of $\BSym_n$ by merely selecting the
rows and columns labelled by $2$-peak partitions $P^{(2)}_n$. (That is, we
select for the type $B$ partitions $(\alpha_0;\alpha_1,\dots,\alpha_p)$
such that $\alpha_0$ is even and $\alpha_1,\dots,\alpha_p$ are odd.) We
know of no such simple description, even conjectural, for the general case.

\section{The classical peak algebras ($r=2$)}

In this section, we restrict our attention to the classical peak algebra
$\GP_n := \GP^{(2)}_n$. We will compute the $q$-Cartan matrix of $\GP_n$,
thus determining the quiver of $\GP_n$ and proving the conjecture at the
end of the previous section for $r=2$.

Recall that for type $B$ compositions or peak compositions $I = (i_0; i_1,
\ldots, i_p)$, we define $\bar I = (i_1,\ldots,i_p)$.

\subsection{$q$-Cartan invariants and quiver}

Let $\projSym{F} = F|_{\bar A =A}$ denote the projection of an arbitrary
element $F\in\MR$ onto $\Sym$. We will show that the projection of $\BSym$
is $\GP$, and also that $\GP$ can be identified with a subalgebra of
$\BSym$. This identification allows us to use known results about the
quiver and $q$-Cartan invariants of $\BSym$ to study $\GP$.

\begin{lemma}
\label{lemma:zetaprojections}
\begin{gather}
\projSym{\zeta_n}
=
\begin{cases}
\zeta^{(2)}_n, & \text{if $n$ is odd,} \\
0, & \text{if $n$ is even,}
\end{cases}
\qquad\text{and}\qquad
\projSym{\tilde\zeta_n}
=
\begin{cases}
0, & \text{if $n$ is odd,} \\
\zeta^{(2)}_n, & \text{if $n$ is even.}
\end{cases}
\end{gather}
In particular, if $I$ is a type $B$ composition, then
\begin{gather}
\projSym{e_I}
=
\begin{cases}
e^{(2)}_I, & \text{if $I$ is a $2$-peak composition,} \\
0, & \text{otherwise.}
\end{cases}
\end{gather}
\end{lemma}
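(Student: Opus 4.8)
The plan is to compare the two generating-series definitions of the relevant Zassenhaus idempotents after applying the projection $\projSym{\cdot}$. Recall that $\projSym{F} = F|_{\bar A = A}$, and note first that $\projSym{\cdot}$ is an algebra morphism $\MR\to\Sym$ (it identifies the two alphabets, so it respects both the product and, being induced by an alphabet substitution, the coproduct). Under this morphism, $\projSym{\sigma_1^\sharp} = \sigma_1(A|A)$; since $A|\bar A$ specialized at $\bar A=A$ gives the alphabet $2A$ in the supersymmetric sense, we get $\projSym{\sigma_1^\sharp} = \sigma_1(2A) = \sigma_1^2$ (the square being in $\Sym$). Also $\projSym{\sigma_1} = \sigma_1$ and $\projSym{\overline\lambda_1} = \lambda_1$.

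The first key step is to project the defining equation \eqref{BSymZ}. Applying the algebra morphism $\projSym{\cdot}$ to
\begin{equation}
\sigma_1^\sharp = \Eu(\zeta)\,\Ed(\zeta)
= \left(e^{\zeta_1}e^{\zeta_2}\cdots\right)\left(\cdots e^{\zeta_2}e^{\zeta_1}\right)
\end{equation}
gives
\begin{equation}
\sigma_1^2 = \left(e^{\projSym{\zeta_1}}e^{\projSym{\zeta_2}}\cdots\right)
             \left(\cdots e^{\projSym{\zeta_2}}e^{\projSym{\zeta_1}}\right).
\end{equation}
I would then invoke the uniqueness statement accompanying \eqref{GenFunZetaR} for $r=2$: taking the square root of $\sigma_1^2$ that is $1$ in degree $0$ and has the form of a product over odd indices times a grouplike series supported on even degrees. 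Concretely, write $\projSym{\zeta_n}=:x_n$; the standard primitivity argument (exactly as in the proof of the Lemma giving \eqref{coprodZr}) shows each $x_n$ is primitive, and collecting terms degree by degree shows that $\sigma_1 = \bigl(\sum_{p\ge0} y_{2p}\bigr)\prod^{\leftarrow}_{2\nmid i} e^{x_i}$ for a grouplike series $\sum y_{2p}$ built from the even $x_n$. Matching this against \eqref{GenFunZetaR} with $r=2$, uniqueness forces $x_i = \zeta^{(2)}_i$ for $i$ odd and $x_{2p}=0$ (so that $\sum y_{2p}=1$ forces each even $x_n$ to vanish). This establishes the first formula for $\projSym{\zeta_n}$. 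The second formula follows by projecting \eqref{BSymtZ}: from $\sigma_1 = \tilde\zeta\,\Ed(\zeta)$ we get $\sigma_1 = \projSym{\tilde\zeta}\cdot\prod^{\leftarrow}_{2\nmid i}e^{\zeta^{(2)}_i}$ (using what we just proved), and comparing with \eqref{GenFunZetaR} for $r=2$ gives $\projSym{\tilde\zeta}=\sum_{p\ge0}\zeta^{(2)}_{2p}$, i.e. $\projSym{\tilde\zeta_n}=\zeta^{(2)}_n$ for $n$ even and $0$ for $n$ odd.

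For the statement about $e_I$: a type $B$ composition $I=(i_0,i_1,\dots,i_p)$ is a $2$-peak composition precisely when $i_0$ is even and $i_1,\dots,i_p$ are odd. Since $e_I = \frac1{\prod_j m_j!}\,\tilde\zeta_{i_0}\zeta_{i_1}\cdots\zeta_{i_p}$ by \eqref{DefinitionBSymIdempotents}, and $\projSym{\cdot}$ is multiplicative, we get $\projSym{e_I} = \frac1{\prod_j m_j!}\,\projSym{\tilde\zeta_{i_0}}\projSym{\zeta_{i_1}}\cdots\projSym{\zeta_{i_p}}$. By the first part this vanishes unless $i_0$ is even and every $i_j$ ($j\ge1$) is odd, i.e. unless $I$ is a $2$-peak composition; in that case it equals $\frac1{\prod_j m_j!}\,\zeta^{(2)}_{i_0}\zeta^{(2)}_{i_1}\cdots\zeta^{(2)}_{i_p} = e^{(2)}_I$ by \eqref{DefinitionPeakIdempotents}. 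The main obstacle is the careful bookkeeping in the uniqueness argument for the square root of $\sigma_1^2$ — one must check that the grouplike factor supported on even degrees is genuinely forced to be trivial, which is where the specific shape of \eqref{GenFunZetaR} (a single grouplike series on multiples of $r$, here $r=2$) is used; everything else is a routine transport of structure along the algebra morphism $\projSym{\cdot}$.
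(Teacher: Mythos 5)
There is a genuine gap, and it sits exactly at the crucial step: the vanishing statements $\projSym{\zeta_{2m}}=0$ and $\projSym{\tilde\zeta_{2m-1}}=0$. Your plan is to project the generating series and then invoke uniqueness of the factorization \eqref{GenFunZetaR} for $r=2$. But uniqueness of \eqref{GenFunZetaR} only applies to a decomposition already in the form $(\text{series supported in even degrees})\cdot\prod^{\leftarrow}_{2\nmid i}e^{x_i}$, and the projection of \eqref{BSymtZ} gives $\sigma_1=\projSym{\tilde\zeta}\cdot(\cdots e^{\projSym{\zeta_3}}e^{\projSym{\zeta_2}}e^{\projSym{\zeta_1}})$, which is \emph{not} of that form unless you already know that the even $\projSym{\zeta_n}$ and the odd $\projSym{\tilde\zeta_n}$ vanish — the factors $e^{\projSym{\zeta_{2m}}}$ cannot be commuted out of the ordered product, and $\projSym{\tilde\zeta}$ is not a priori supported in even degrees. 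Moreover, your parenthetical ``uniqueness forces $\sum y_{2p}=1$'' is backwards: the grouplike factor $\sum_p\zeta^{(2)}_{2p}$ in \eqref{GenFunZetaR} is nontrivial (e.g.\ $\zeta^{(2)}_2=S_2-\tfrac12S^{11}$), so matching against it cannot force any even-degree contribution to vanish. The paper closes this gap with a parity argument you do not have: introducing the parameter $t$, one checks $\projSym{\sigma_{-t}^\sharp}=\lambda_{-t}\sigma_{-t}=(\lambda_t\sigma_t)^{-1}=\projSym{\sigma_t^\sharp}^{-1}$, which against the palindromic product \eqref{zetat} forces $\projSym{\zeta_{2m}}=0$; similarly $\projSym{\tilde\zeta}(-t)=\projSym{\tilde\zeta}(t)$ forces $\projSym{\tilde\zeta_{2m-1}}=0$. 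Only after these vanishings does the identification of the projected series \eqref{tildezetat} with \eqref{GenFunZetaR} (which you do carry out correctly) become legitimate.

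A secondary error: $\projSym{\sigma_1^\sharp}=\projSym{\bar\lambda_1\sigma_1}=\lambda_1\sigma_1$, which is \emph{not} $\sigma_1^2$ (already in degree $2$: $\Lambda_2+\Lambda_1S_1+S_2=2S^{11}$, whereas $\sigma_1^2$ has component $2S_2+S^{11}$). The identity $\sigma_1^\sharp*\sigma_1^\sharp=(\sigma_1^\sharp)^2$ concerns the internal product, not the alphabet specialization $\bar A=A$. Your final paragraph on $\projSym{e_I}$ is correct and coincides with the paper's argument, but it rests entirely on the first part, which as written does not go through.
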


\Proof 
Recall that the elements $\zeta_n, \tilde\zeta_n \in \BSym$ are defined as
the coefficients of $t^n$ in the following two series, respectively,
\begin{align}
\label{zetat}
\sigma_t^\sharp 
 &= \left( e^{\zeta_1t}e^{\zeta_2t^2}e^{\zeta_3t^3}\cdots \right)
 \left( \cdots e^{\zeta_3t^3}e^{\zeta_2t^2}e^{\zeta_1t}\right), \\
\label{tildezetat}
\sigma_t
 &= \left(\sum_{m\geq0} \tilde\zeta_mt^m\right)
 \left( \cdots e^{\zeta_3t^3}e^{\zeta_2t^2}e^{\zeta_1t}\right),
\end{align}
where
\begin{gather}
\sigma_t^\sharp := \sum_{n\geq0} S_n^\sharp t^n, 
\qquad 
\sigma_t := \sum_{n\geq0} S_n t^n.
\end{gather}

The projection of $\sigma_t^\sharp$ is
\begin{equation}
\projSym{\sigma_t^\sharp} = \projSym{\bar\lambda_t\sigma_t} =
\lambda_t\sigma_t,
\end{equation}
which satisfies
\begin{equation}
\projSym{\sigma_{-t}^\sharp}
    = \lambda_{-t}\sigma_{-t}
    = (\lambda_t\sigma_t)^{-1}
    = \projSym{\sigma_t^\sharp}^{-1}.
\end{equation}
Combined with Equation \eqref{zetat}, this yields the identity
\begin{align}
 &
 \left(
 e^{-\projSym{\zeta_1}t}e^{\projSym{\zeta_2}t^2}e^{-\projSym{\zeta_3}t^3}
 \cdots 
 \right)
 \left( \cdots
 e^{-\projSym{\zeta_3}t^3}e^{\projSym{\zeta_2}t^2}e^{-\projSym{\zeta_1}t}
 \right) 
 \\\notag
 &
 \qquad = \left(
 e^{-\projSym{\zeta_1}t}e^{-\projSym{\zeta_2}t^2}e^{-\projSym{\zeta_3}t^3}
 \cdots 
 \right)
 \left( \cdots
 e^{-\projSym{\zeta_3}t^3}e^{-\projSym{\zeta_2}t^2}e^{-\projSym{\zeta_1}t}
 \right),
\end{align}
from which it follows that
$\projSym{\zeta_{2m}} = 0$
for all $m\geq1$.

From Equations \eqref{zetat} and \eqref{tildezetat}, we have that
\begin{align}
\sigma_t
 &= \left(\sum_{m\geq0} \tilde\zeta_mt^m\right)
 \left( \cdots e^{\zeta_3t^3}e^{\zeta_2t^2}e^{\zeta_1t}\right) \\
&= 
 \bar\sigma_{-t}
 \left( e^{\zeta_1t}e^{\zeta_2t^2}e^{\zeta_3t^3}\cdots \right)
 \left( \cdots e^{\zeta_3t^3}e^{\zeta_2t^2}e^{\zeta_1t}\right),
\end{align}
and so a generating series for the $\tilde\zeta_m$ is given by
\begin{align}
\tilde\zeta(t) := \sum_{m\geq0} \tilde\zeta_m t^m
= 
\bar\sigma_{-t}
\left( e^{\zeta_1t}e^{\zeta_2t^2}e^{\zeta_3t^3}\cdots \right).
\end{align}
Denoting by $\projSym{\tilde\zeta}(t)$ the projection
of $\tilde\zeta(t)$, and remembering that
$\projSym{\zeta_{2m}}=0$,
\begin{align}
\projSym{\tilde\zeta}(-t) 
&= 
\sigma_{t}
\left(
e^{-\projSym{\zeta_1}t}e^{\projSym{\zeta_2}t^2}e^{-\projSym{\zeta_3}t^3}\cdots
\right)
= \projSym{\tilde\zeta}(t),
\end{align}
where the last equality follows from Equation \eqref{tildezetat}.
Thus, $\projSym{\tilde\zeta}(t)$ is even, which implies
that $\projSym{\tilde\zeta_{2m-1}} = 0$ for $m\geq1$.

Next, note that the image of the generating series in
Equation \eqref{tildezetat} is precisely the generating series of the
elements $\zeta^{(r)}_n$ given in Equation \eqref{GenFunZetaR}. 
Since this later generating series uniquely defines the elements
$\zeta^{(2)}_n$, it follows that
\begin{gather}
\projSym{\tilde\zeta_{2m}} = \zeta^{(2)}_{2m}
\quad\text{and}\quad
\projSym{\zeta_{2m+1}} = \zeta^{(2)}_{2m+1}.
\end{gather}

For the last assertion, recall that Equation
\eqref{DefinitionBSymIdempotents} defines $e_I\in\BSym$ for any type $B$
composition as
\begin{align}
e_I = 
\frac{1}{m_{\bar I}} \tilde\zeta_{i_0} \zeta_{i_1} \cdots \zeta_{i_p}.
\end{align}
Hence, if $I$ is not a $2$-peak composition (that is, if $i_0$ is not even
or if any of $i_1,\ldots,i_p$ are not odd), then $\projSym{e_I} = 0$.
And if $I$ is a $2$-peak composition,
then 
\begin{align}
\projSym{e_I} = \frac{1}{m_{\bar I}} \zeta^{(r)}_{i_0}
\zeta^{(r)}_{i_1} \cdots \zeta^{(r)}_{i_p} = e^{(2)}_I
\end{align}
where the last equality is just the definition of $e^{(2)}_I\in\GP$
(see Equation \eqref{DefinitionPeakIdempotents}). 
\qed

Using this result we can prove that the peak algebra $\GP_n$ is both a
quotient and a subalgebra of $\BSym_n$.

\begin{theorem} \
\label{thm:PeakInBSym}
(i) The projection of $\BSym$ onto $\Sym$ is $\projSym{\BSym} = \GP$.
\\
(ii) $\GP_n$ is isomorphic to the subalgebra 
$\varepsilon_n * \BSym_n * \varepsilon_n$
of $\BSym_n$, where
\begin{align}
\varepsilon_n = \sum_{\lambda\in P^{(2)}_n} e_\lambda \in \BSym_n.
\end{align}
\end{theorem}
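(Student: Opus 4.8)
The plan is to treat the two statements separately, with Lemma~\ref{lemma:zetaprojections} doing the heavy lifting for both. For part (i), I would first note that $\projSym{\cdot}$ is an algebra homomorphism for the external product (since setting $\bar A = A$ is a specialization of alphabets), so $\projSym{\BSym}$ is the subalgebra of $\Sym$ generated by the images of a generating set of $\BSym$. As a right $\Sym$-module, $\BSym$ is freely generated by the $\tilde S_n$, and in our realization $\tilde S^I = S_{i_0}(A)\,S^{i_1\cdots i_r}(A|\bar A)$; so $\projSym{\BSym}$ is the span of the $\projSym{\tilde S^I}$. Using Lemma~\ref{lemma:zetaprojections} — or more directly the fact that $\sigma_1^\sharp \mapsto \lambda_1\sigma_1$ under $\projSym{\cdot}$, which is exactly $\theta_q(\sigma_1)$ at $q=-1$ up to the known normalization — one sees that $\projSym{F^\sharp} = \theta_{-1}(F)$ for all $F\in\Sym$, hence $\projSym{S^{i_1\cdots i_r}(A|\bar A)} \in \PP^{(2)}$. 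Therefore $\projSym{\tilde S^I} = S_{i_0}\cdot\theta_{-1}(S^{i_1\cdots i_r}) \in \GP^{(2)}$, giving $\projSym{\BSym}\subseteq\GP$; the reverse inclusion is immediate since every such element is visibly in the image. Alternatively, and perhaps more cleanly, I would invoke the idempotent bases: Lemma~\ref{lemma:zetaprojections} shows $\projSym{\cdot}$ sends the basis $\{e_I : I \text{ a }B\text{-composition}\}$ of $\BSym$ onto $\{e^{(2)}_I : I \text{ a }2\text{-peak composition}\}\cup\{0\}$, and the latter is a basis of $\GP_n$; this pins down $\projSym{\BSym_n}=\GP_n$ on the nose and simultaneously computes the kernel.

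For part (ii), the element $\varepsilon_n = \sum_{\lambda\in P^{(2)}_n} e_\lambda$ is an idempotent of $(\BSym_n,*)$, being a sum of orthogonal idempotents (by Theorem~\ref{BSymIdempotentTheorem} the $e_\lambda$ for $B$-partitions $\lambda$ form a complete orthogonal system, and we are summing the sub-collection indexed by $2$-peak partitions). Hence $\varepsilon_n * \BSym_n * \varepsilon_n$ is a unital subalgebra of $\BSym_n$ with unit $\varepsilon_n$. I would then define the candidate isomorphism $\Phi\colon \GP_n \to \varepsilon_n*\BSym_n*\varepsilon_n$ and its inverse using $\projSym{\cdot}$ in one direction: concretely, $\projSym{\cdot}$ restricted to $\varepsilon_n*\BSym_n*\varepsilon_n$ should land in $\GP_n$ (by part (i)) and be injective there. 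Injectivity is where I would spend the most care: I would use the idempotent basis, showing that $\varepsilon_n*\BSym_n*\varepsilon_n$ has as a linear basis the set $\{e_I : I,I' \text{ both }2\text{-peak}, \ldots\}$ — more precisely, by point (iii)-type reasoning on the $\zeta$'s in type $B$, the space $\varepsilon_n*\BSym_n*\varepsilon_n$ is spanned by those $e_I$ with $I\dwna \in P^{(2)}_n$ together with radical contributions $e_\nu * (\text{stuff}) * e_\mu$ with $\mu,\nu\in P^{(2)}_n$, and Lemma~\ref{lemma:zetaprojections} shows $\projSym{\cdot}$ is a bijection from this spanning set onto the corresponding spanning set of $\GP_n$. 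Comparing dimensions (both equal $\#\{2\text{-peak compositions of }n\}$) closes the argument.

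The main obstacle I anticipate is verifying that $\projSym{\cdot}$ is genuinely a ring homomorphism for the \emph{internal} product on the relevant subspaces — a priori $\projSym{\cdot}$ is only manifestly compatible with the external product and with alphabet substitutions, not with $*$. The resolution is that on $\varepsilon_n*\BSym_n*\varepsilon_n$ the internal product can be re-expressed via the splitting formula in terms of external products and coproducts of the $\zeta_i$'s, and Lemma~\ref{lemma:zetaprojections} tells us exactly how the $\zeta_i$ and $\tilde\zeta_i$ (and their coproducts, via the earlier coproduct lemmas) map under $\projSym{\cdot}$: odd $\zeta$'s and even $\tilde\zeta$'s go to the level-$2$ Zassenhaus idempotents $\zeta^{(2)}$, the rest die. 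So $\projSym{\cdot}$ intertwines the structure constants of the $e_I$-basis of $\varepsilon_n*\BSym_n*\varepsilon_n$ with those of the $e^{(2)}_I$-basis of $\GP_n$, which is precisely the assertion that $\Phi$ is an algebra isomorphism. I would also double-check the normalization constants $m_{\bar I}$ match on both sides, which they do by construction of the two idempotent families.
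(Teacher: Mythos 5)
Your proposal is correct and follows essentially the same route as the paper: part (i) via Lemma~\ref{lemma:zetaprojections} mapping the idempotent basis $(e_I)$ of $\BSym_n$ onto the basis $(e^{(2)}_J)$ of $\GP_n$ (plus zero), and part (ii) by restricting $\projSym{\cdot}$ to $\varepsilon_n*\BSym_n*\varepsilon_n$ and comparing dimensions. The one soft spot is that you assert $\dim(\varepsilon_n*\BSym_n*\varepsilon_n)=\#\{2\text{-peak compositions of }n\}$ and propose to verify it by exhibiting a basis ``with radical contributions,'' which is vaguer than necessary; the paper instead squeezes it, using $\dim(\GP_n)=\dim(\BSym_n*\varepsilon_n)\geq\dim(\varepsilon_n*\BSym_n*\varepsilon_n)\geq\dim(\projSym{\varepsilon_n*\BSym_n*\varepsilon_n})=\dim(\GP_n)$, where the first equality comes from matching the bases of the projective modules $\BSym_n*e_\lambda$ and $\GP_n*e^{(2)}_\lambda$ for each $\lambda\in P^{(2)}_n$. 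Your concern about $\projSym{\cdot}$ being a morphism for the internal product is legitimate (the paper uses this silently), and your proposed resolution via the splitting formula is a reasonable way to discharge it.
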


\Proof
From Lemma \ref{lemma:zetaprojections}, the basis $(e_I)_I$ of $\BSym$,
where $I$ is a type $B$ composition, maps onto the basis $(e^{(2)}_J)_J$ of
$\GP$, where $J$ is a $2$-peak composition. This proves (i).

Since $\projSym{e_\lambda} = e^{(2)}_\lambda$ for $2$-peak partitions
$\lambda$,
\begin{align}
\projSym{\varepsilon_n} = \sum_{\lambda\in P^{(2)}_n} e^{(2)}_\lambda = 1 \in 
\GP_n.
\end{align}
Hence, $\projSym{\varepsilon_n * \BSym_n * \varepsilon_n} = \GP_n$.
We now only need to prove that the two algebras have the same dimension.

For any $2$-peak partition $\lambda$, the dimension of $\BSym_n *
e_\lambda$ is the number of rearrangements of $\bar\lambda$, which is also
the dimension of $\GP_n * e^{(2)}_\lambda$ (these statements follow
from the facts that the elements $e_I$ and $e^{(2)}_I$, one for each
$2$-peak composition $I$ such that $\bar I\dwna = \bar\lambda$, form bases
of $\BSym_n*e_\lambda$ and $\GP_n * e^{(2)}_\lambda$, respectively).
This implies that
\begin{align}
& \dim\left(\GP_n\right)
=
\dim\left(\GP_n * \sum_{\lambda\in P^{(2)}_n} e^{(2)}_\lambda\right)
=
\dim(\BSym_n * \varepsilon_n) \\
& \qquad \geq
\dim(\varepsilon_n * \BSym_n * \varepsilon_n) 
\geq
\dim(\projSym{\varepsilon_n * \BSym_n * \varepsilon_n})
=
\dim(\GP_n),
\end{align}
so that $\dim(\varepsilon_n * \BSym_n * \varepsilon_n) = \dim(\GP_n)$.
\qed

\begin{corollary}
Let us label the vertices of the quiver $Q_n$ of $\GP_n$ by odd
partitions $\bar\mu$ of $n,n-2,n-4,\ldots$. 
There are exactly $m>0$ arrows from $\bar\mu$ to $\bar\nu$ in $Q_n$ if and
only if $\bar\nu$ is obtained from $\bar\mu$ by:
deleting two unequal parts (in which case $m=1$); or 
merging three parts, of which at most two are equal (if no two of the
merged parts are equal, then $m=2$; if exactly two are equal, then $m=1$).
\end{corollary}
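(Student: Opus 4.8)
The plan is to deduce the quiver of $\GP_n$ from that of $\BSym_n$ using the identification $\GP_n \cong \varepsilon_n * \BSym_n * \varepsilon_n$ established in Theorem~\ref{thm:PeakInBSym}(ii). Since $\varepsilon_n = \sum_{\lambda \in P_n^{(2)}} e_\lambda$ is an idempotent and the $e_\lambda$ for $\lambda \in P_n^{(2)}$ are a subset of the principal idempotents of $\BSym_n$, the quiver of $\varepsilon_n * \BSym_n * \varepsilon_n$ is obtained from the quiver of $\BSym_n$ by restricting to the vertices indexed by $2$-peak partitions, and the number of arrows $\bar\mu \to \bar\nu$ in $Q_n$ equals $\dim\bigl(e_\nu * \rad(\BSym_n) * e_\mu / e_\nu * \rad^2(\BSym_n) * e_\mu\bigr)$ for the corresponding $2$-peak partitions $\nu, \mu$. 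So the first step is to invoke (or recall) the known description of the quiver of $\BSym_n$, presumably from a companion computation of the $q$-Cartan invariants of $\BSym_n$: there is an arrow from a type-$B$ partition $\mu = (\mu_0; \bar\mu)$ to $\nu = (\nu_0; \bar\nu)$ precisely when $\nu$ is obtained from $\mu$ by merging parts in one of the "minimal" ways that increase the appropriate length statistic by one, with multiplicities governed by how many of the merged parts coincide (this is the type-$B$ analogue of the type-$A$ result recalled in the first quiver theorem of the excerpt).

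Next I would translate that type-$B$ combinatorial description through the bijection $\mu \leftrightarrow \bar\mu$ that drops the (even) part $\mu_0$. A $2$-peak partition is $(\mu_0; \bar\mu)$ with $\mu_0$ even and all parts of $\bar\mu$ odd; the data is equivalent to an odd partition $\bar\mu$ of $n, n-2, n-4, \dots$ (the size $n - |\bar\mu|$ recording $\mu_0$, which may be $0$). A covering relation in $\BSym_n$ restricted to $2$-peak partitions corresponds to one of two things happening at the level of $\bar\mu$: either two odd parts of $\bar\mu$ are absorbed into the even part $\mu_0$ (their sum is even, so it legitimately joins $\mu_0$) — this is "deleting two unequal parts of $\bar\mu$", and since in type $B$ absorbing two parts into $\mu_0$ is a single merge the multiplicity is $1$ (and the two parts must be unequal for the resulting partition to actually cover, as equal merged parts kill the radical element, exactly as in type $A$); or three odd parts are merged into one odd part (odd + odd + odd = odd), which is "merging three parts of which at most two are equal", with multiplicity $2$ if all three are distinct and $1$ if exactly two coincide — again matching the type-$B$ multiplicity rule for a $3$-fold merge. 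The case where an odd part of $\bar\mu$ merges with the even part $\mu_0$ alone is excluded because that would produce an odd $\mu_0$, i.e.\ would leave the set of $2$-peak partitions; and merging two odd parts into an even one that is \emph{not} $\mu_0$ is likewise impossible since $\bar\nu$ must stay odd.

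The remaining point is to check the multiplicities carefully, which is where the main subtlety lies: one must verify that the "at most two equal" constraint in case three parts are merged is forced, and that in the two-part-deletion case the parts must be \emph{unequal}. Both reduce to the same radical-vanishing phenomenon already visible in type $A$ in the excerpt (if $\lambda_i = \lambda_j$ then $e_\mu * e_I = 0$): symmetrizing over equal Lie-idempotent factors of equal degree produces a term that already lies in $\rad^{*2}$ or vanishes, lowering the arrow count by exactly the right amount; when three parts are merged and two of them are equal, the stabilizer of the multiset of degrees has order $2$, cutting the naive multiplicity from $2$ to $1$, and when all three are equal the element dies, so there is no arrow. I expect the only real work is in extracting and citing the precise $\BSym_n$ quiver/Cartan statement (which the paper evidently proves or recalls elsewhere in the type-$B$ section's $q$-Cartan computation) and bookkeeping the merge multiplicities; everything else is a transparent relabeling. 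Thus the corollary follows by applying Theorem~\ref{thm:PeakInBSym}(ii), restricting the $\BSym_n$ quiver to $2$-peak vertices, and rewriting the resulting merge/absorb moves in terms of the odd partition $\bar\mu$.
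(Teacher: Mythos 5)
Your overall strategy coincides with the paper's: identify $\GP_n$ with the corner algebra $\varepsilon_n * \BSym_n * \varepsilon_n$ via Theorem~\ref{thm:PeakInBSym}(ii) and transfer the quiver of $\BSym_n$ from \cite[Theorem~9.1]{Sal}. However, there is a genuine gap in the execution. Your formula for the arrow count, $\dim\bigl(e_\nu * \rad(\BSym_n) * e_\mu \,/\, e_\nu * \rad^2(\BSym_n) * e_\mu\bigr)$, is the arrow count for the quiver of $\BSym_n$ itself, not for the corner algebra. One has $\rad(\varepsilon_n * \BSym_n * \varepsilon_n) = \varepsilon_n * \rad(\BSym_n) * \varepsilon_n$, but its square is $\varepsilon_n * \rad(\BSym_n) * \varepsilon_n * \rad(\BSym_n) * \varepsilon_n$, which is in general strictly smaller than $\varepsilon_n * \rad^2(\BSym_n) * \varepsilon_n$; the correct denominator is therefore $e_\nu * \rad(\BSym_n) * \varepsilon_n * \rad(\BSym_n) * e_\mu$. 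This is not a technicality here: each arrow of the $\BSym_n$ quiver corresponds to a single covering move (absorb one part into $\mu_0$, or merge two distinct parts), and by parity no such move sends a $2$-peak partition to a $2$-peak partition --- absorbing an odd part makes $\mu_0$ odd, and merging two odd parts creates an even non-special part. Hence \emph{every} arrow of $Q_n$, including the ``delete two unequal parts'' kind, arises from a length-two path of the $\BSym_n$ quiver through a discarded vertex, and with your formula taken literally $Q_n$ would have no arrows at all. Your subsequent prose (correctly) treats these composite moves as arrows, so the write-up contradicts its own displayed formula.

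The second issue is the multiplicities. Your justifications --- ``absorbing two parts into $\mu_0$ is a single merge in type $B$'' and ``the type-$B$ multiplicity rule for a $3$-fold merge'' --- appeal to rules that are not part of the type-$B$ quiver description, where each arrow removes one part or merges two. For example, in $\BSym_4$ there are three length-two paths from $(0;31)$ to $(4;)$ (absorb $3$ then $1$; absorb $1$ then $3$; merge $3+1$ then absorb $4$), all through non-$2$-peak vertices, yet the corollary asserts $m=1$; similarly a triple merge of three distinct odd parts admits three length-two factorizations but yields $m=2$. Pinning these numbers down requires computing $\dim\bigl(e_\nu * \rad(\BSym_n) * e_\mu\bigr) - \dim\bigl(e_\nu * \rad(\BSym_n) * \varepsilon_n * \rad(\BSym_n) * e_\mu\bigr)$, i.e.\ genuinely using the relations (equivalently, the $q$-Cartan data) of $\BSym_n$ and not just its arrow set. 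To be fair, the paper's own proof is also very terse on exactly this point, deferring everything to the cited computation of the $\BSym_n$ quiver; but your proposal substitutes heuristics that, taken at face value, give the wrong counts, so the multiplicity claims remain unproved.
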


\Proof
By Theorem \ref{thm:PeakInBSym}, we can identify $\GP_n$ with the
subalgebra $\varepsilon_n * \BSym_n * \varepsilon_n$ of $\BSym$. Since
$\varepsilon_n$ is a sum of primitive orthogonal idempotents of $\BSym_n$,
it follows that the quiver $Q_n$ of $\GP_n$ is obtained from the
quiver of $\BSym_n$ by restricting to the vertices labelled by $2$-peak
partitions $\lambda$. The result follows immediately as the quiver of
$\BSym_n$ has already been computed \cite[Theorem~9.1]{Sal}.
%
\qed

From these results we can easily derive the results mentioned at the end of
the previous section for the $r=2$ case.

\begin{theorem} \
\begin{enumerate}
\item[\emph{(i)}]
The Cartan matrix of $\GP_n$ is obtained from that of $\BSym_n$ by
selecting the rows and columns labelled by the $2$-peak partitions
$P^{(2)}_n$.
\item[\emph{(ii)}]
For $r=2$, Conjecture \ref{conjcart} is true. Moreover, continuing with the
notation of Conjecture \ref{conjcart}, $I$ contributes
$q^{\frac{1}{2}(l(\bar\mu)-l(\bar\nu))}$ to the $q$-Cartan matrix of $\GP_n$.
\end{enumerate}
\end{theorem}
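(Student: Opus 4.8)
The plan is to derive both (i) and (ii) from Theorem~\ref{thm:PeakInBSym}, which identifies $\GP_n$ with the corner algebra $\varepsilon_n*\BSym_n*\varepsilon_n$, together with the already-known representation theory of $\BSym_n$ from \cite{Sal}. For (i), recall that $\varepsilon_n=\sum_{\lambda\in P^{(2)}_n}e_\lambda$ is a sum of pairwise orthogonal primitive idempotents of $\BSym_n$ (the $e_\lambda$ for $\lambda$ a $B$-partition are a complete family of orthogonal idempotents by Theorem~\ref{BSymIdempotentTheorem}, and those indexed by $2$-peak partitions are exactly the ones surviving the projection by Lemma~\ref{lemma:zetaprojections}). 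For any finite-dimensional algebra $B$ and any idempotent $\varepsilon$ that is a sum of primitive orthogonal idempotents, the Cartan matrix of $\varepsilon B\varepsilon$ is the submatrix of the Cartan matrix of $B$ obtained by keeping the rows and columns indexed by the chosen primitive idempotents: indeed $\Hom_{\varepsilon B\varepsilon}(\varepsilon B e_\nu,\, \varepsilon B e_\mu)\cong e_\mu B e_\nu = \Hom_B(Be_\nu,Be_\mu)$ by the standard corner-algebra equivalence, and $\dim e_\mu Be_\nu$ is precisely the $(\mu,\nu)$ Cartan invariant of $B$. Applying this with $B=\BSym_n$, $\varepsilon=\varepsilon_n$, and the primitive idempotents $e_\lambda$, $\lambda\in P^{(2)}_n$, gives (i) at once.

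For (ii), I would first observe that the conjectural algorithm of Conjecture~\ref{conjcart}, restricted to $r=2$, is exactly the description of the $\BSym_n$ Cartan invariants already obtained in \cite{Sal}. Concretely: the Cartan invariant $\dim(e_\nu*\BSym_n*e_\mu)$ for $B$-partitions $\mu,\nu$ counts certain permutations built from rearrangements $\bar I$ of $\bar\mu$, via standardization, cycle decomposition, summing values along cycles, and then discarding even sums (those divisible by $r=2$) and reordering; this is the statement proved in \cite{Sal} for the descent algebra of $B_n$. Since by part~(i) the $(\bar\mu,\bar\nu)$ entry of the Cartan matrix of $\GP_n$ coincides with the $(\mu,\nu)$ entry of the Cartan matrix of $\BSym_n$ whenever $\mu,\nu$ are $2$-peak partitions, and since $P^{(2)}_n$ consists exactly of the $B$-partitions $(\mu_0;\bar\mu)$ with $\mu_0$ even and $\bar\mu$ all odd, the algorithm producing $\bar\nu$ from $\bar\mu$ is precisely the one of Conjecture~\ref{conjcart}. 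Thus the conjecture holds for $r=2$.

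For the $q$-refinement, the plan is to use that $\varepsilon_n$ being a sum of primitive orthogonal idempotents means the radical filtration of $\varepsilon_n*\BSym_n*\varepsilon_n$ is the restriction of that of $\BSym_n$ (since $\rad(\varepsilon B\varepsilon)=\varepsilon\,\rad(B)\,\varepsilon$, and more generally $\rad^k(\varepsilon B\varepsilon)=\varepsilon\,\rad^k(B)\,\varepsilon$). Hence the $q$-Cartan (Loewy) matrix of $\GP_n$ is likewise the submatrix of the $q$-Cartan matrix of $\BSym_n$ on the $2$-peak rows and columns. From \cite[Theorem~9.1 and its Loewy-series refinement]{Sal}, a rearrangement $\bar I$ of $\bar\mu$ producing $\bar\nu$ contributes $q^{\ell(\bar\mu)-\ell(\bar\nu)}$ to the $B$-type $q$-Cartan matrix (the filtration degree is the drop in length, coming from each merge of parts adding one to the radical layer). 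Here I need to be slightly careful about the factor $\tfrac12$: in the $B_n$ setting the grading is by $\ell$ of the full $B$-partition and the claimed exponent is $\ell(\lambda)-\ell(\mu)$, whereas the statement to prove asserts $q^{\frac12(\ell(\bar\mu)-\ell(\bar\nu))}$. The reconciliation is that, tracing through the algorithm, each cycle of length $c$ in $\tau=\Std(\bar I)$ merges $c$ parts of $\bar\mu$ into a single part (or into nothing, if the sum is even), so the length drop $\ell(\bar\mu)-\ell(\bar\nu)$ is always even precisely because of how cycles of a standardized word pair up parity; alternatively, one passes through the $\BSym$ identification where the relevant parts come in from the superized alphabet $A|\bar A$ in pairs. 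I expect this parity/normalization bookkeeping — verifying that the radical-layer exponent inherited from $\BSym_n$ equals $\tfrac12(\ell(\bar\mu)-\ell(\bar\nu))$ rather than $\ell(\bar\mu)-\ell(\bar\nu)$ — to be the main obstacle, and I would settle it by a direct comparison of the Loewy filtration on $\varepsilon_n*\BSym_n*\varepsilon_n$ with the $\zeta^{(2)}$-description of $\GP_n$, using that the powers of the radical for $*$ coincide with the lower central series for the external product (as in \cite{BL}).
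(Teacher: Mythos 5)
Your part (i) and the first half of part (ii) follow essentially the paper's route: the corner-algebra identification $e^{(2)}_\nu * \GP_n * e^{(2)}_\mu \cong e_\nu * \BSym_n * e_\mu$ coming from Lemma~\ref{lemma:zetaprojections} and Theorem~\ref{thm:PeakInBSym} gives (i), and the $r=2$ case of Conjecture~\ref{conjcart} is obtained by restricting the known combinatorial description of the Cartan invariants of $\BSym_n$ to $2$-peak partitions. (One small correction: the paper takes that description from N.~Bergeron \cite{NBergeron1992}, not from \cite{Sal}, which is cited for the quiver; but the shape of the argument is the same.)

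The $q$-refinement is where your proposal has a genuine gap. The identity $\rad^k(\varepsilon B\varepsilon)=\varepsilon\,\rad^k(B)\,\varepsilon$ that you invoke as standard is false in general: one only has $\rad^k(\varepsilon B\varepsilon)=(\varepsilon\,\rad(B)\,\varepsilon)^k\subseteq\varepsilon\,\rad^k(B)\,\varepsilon$, and the inclusion is strict whenever a product of radical elements between two retained idempotents factors through a discarded one --- such an element can sit in a strictly lower Loewy layer of the corner algebra than of $B$. So the $q$-Cartan matrix of $\varepsilon_n*\BSym_n*\varepsilon_n$ is not automatically the $2$-peak submatrix of the $q$-Cartan matrix of $\BSym_n$; only the $q=1$ specialization restricts, which is exactly what part (i) asserts and no more. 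Your proposed reconciliation of the factor $\tfrac12$ is, moreover, left as a plan rather than an argument (and the parity of $\ell(\bar\mu)-\ell(\bar\nu)$ needs no cycle-structure reasoning: all parts of $\bar\mu$ and $\bar\nu$ are odd and $|\bar\mu|\equiv|\bar\nu|\pmod 2$, so the lengths have the same parity). The paper sidesteps all of this by reading the exponent off the quiver of $\GP_n$ itself, already computed in the preceding corollary: every arrow of that quiver --- deleting two unequal parts, or merging three parts --- lowers $\ell(\bar{\cdot})$ by exactly $2$, so any nonzero element of $e^{(2)}_\nu*\GP_n*e^{(2)}_\mu$ lies in Loewy layer exactly $\tfrac12(\ell(\bar\mu)-\ell(\bar\nu))$. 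To repair your argument you should replace the $\rad^k$ claim by this quiver-grading argument inside $\GP_n$ (or else actually prove that the two Loewy filtrations agree here, which is not obvious).
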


\Proof
By Lemma \ref{lemma:zetaprojections} and Theorem \ref{thm:PeakInBSym}, for
any $\nu,\mu\in P^{(2)}_n$ we have
\begin{align}
e^{(2)}_\nu * \GP_n * e^{(2)}_\mu
\cong 
e_\nu * \BSym_n * e_\mu,
\end{align}
which proves (i). 
For the first part of (ii), we recall that a combinatorial description of
the Cartan invariants of $\BSym_n$ was given in \cite{NBergeron1992}. If we
restrict that description to $2$-peak partitions, then the resulting
formulation is equivalent to that described in Conjecture \ref{conjcart}.
The final assertion follows from the description of the quiver of $\GP_n$
computed above. \qed

\subsection{Comparison with earlier works}

A basis of idempotents, and a complete set of orthogonal idempotents for
$\GP_n$, have been obtained in \cite{ANO}, relying on previous work on the
descent algebras of type $B$ \cite{BB}. Here is a short derivation of these
results, using our realization of type $B$ noncommutative symmetric
functions \cite{Chow}.

Let us recall the notation
\begin{equation}
\phi(t)= \log \sigma_t = \sum_{n\ge 1}\frac{\Phi_n}{n}t^n\,.
\end{equation}
For $r=2$,
\begin{equation}
\sigma_t^\sharp:=\sigma_t(A-q\bar A)|_{q=-1}=\bar\lambda_t\sigma_t
=\exp(\phi^\sharp(t))\,.
\end{equation}
We have already seen that
\begin{equation}
\sigma_t^\sharp * \sigma_t^\sharp 
= (\sigma_t^\sharp)^2
= \exp(2\phi^\sharp(t))\,,
\end{equation}
so that
\begin{equation}
E_\lambda
:=\sum_{I\downarrow=\lambda}
  \frac{\Phi^{I\sharp}}{2^{l(I)}i_1i_2\cdots i_{l(I)}}
\end{equation}
are orthogonal idempotents, summing to $\sigma_1^\sharp$. 

Following \cite[Def. 4.14]{Chow}, define
\begin{equation}
\eta(t):=\sigma_t\cdot[\sigma_t^\sharp]^{-\frac12}\,.
\end{equation}
Then, $\eta(1)$ is an idempotent.

Denote by $\tilde f = f|_{\bar A =A}$ the projection onto $\NCSF$ of an
element of $\MR$. Then,
\begin{equation}
\xi(t):=\tilde \sigma_t^\sharp =\lambda_t\sigma_t
\end{equation}
satisfies
\begin{equation}
\xi(-t)=\xi(t)^{-1}\,.
\end{equation}
Indeed,
\begin{equation}
\xi(-t)=\lambda_{-t}\sigma_{-t}=(\lambda_t\sigma_t)^{-1}\,,
\end{equation}
so that
\begin{equation}
\tilde\phi^\sharp(-t)=-\tilde\phi^\sharp(t)\,,
\end{equation}
that is, only the $\phi_n$ with $n$ odd  survive the projection onto $\GP$.
Moreover,
\begin{equation}
\begin{split}
\tilde\eta(-t)&=\sigma_{-t}(\lambda_{-t}\sigma_{-t})^{-\frac12}
=\sigma_{-t}(\lambda_t\sigma_t)^{\frac12}\\
&= [(\lambda_t\sigma_t)^{-\frac12}\lambda_t\sigma_t\lambda_{-t}]^{-1}
=[(\lambda_t\sigma_t)^{\frac12}\lambda_{-t}]^{-1}=\tilde\eta(t)\,,
\end{split}
\end{equation}
so that $\tilde\eta(-t)$ is even, and only the $\eta_{2k}$ survive.
Hence, if
$\lambda=(\lambda_0,\lambda_1,\ldots,\lambda_m)=(\lambda_0;\bar\lambda)$
runs over partitions of $n$ such that $\lambda_0$ is even (allowed to be 0)
and the other parts are odd,
\begin{equation}
\tilde E_\lambda
 = \eta_{\lambda_0}
   \sum_{J\downarrow\bar\lambda}
       \frac{\tilde\Phi^{I\sharp}}{2^{l(I)}i_1i_2\cdots i_{l(I)}}
\end{equation}
is a complete set of orthogonal idempotents of $\GP_n$, consisting of the
nonzero images of a complete set for $\BSym_n$. These idempotents coincide
with those of \cite{ANO}, but are different from ours. They have nevertheless
a similar structure.

\section{$q$-Cartan invariants for generalized peak algebras}

Conjecture \ref{conjcart} provided a conjectural description of the Cartan
invariants of the higher order peak algebras $\GP_n^{(r)}$. The following
presents their $q$-analogs: the coefficient of $q^k$ in row $\lambda$,
column $\mu$, is the multiplicity of the simple module $\lambda$ in the
$k$th slice of the descending Loewy series of the indecomposable projective
module $\mu$. (Here $q$ is an indeterminate, and not to be confused with
the primitive root of unity that $q$ denoted earlier.) The usual Cartan
invariants can be obtained from this $q$-analog by setting $q=1$. Note that
the $q$-Cartan matrix also encodes the quiver of the algebra since the
single powers of $q$ in the matrix correspond to the arrows of the quiver.

Also note that for $r\geq n$, the $q$-Cartan matrix of $\GP_n^{(r)}$ is the
same, up to indexation, as the $q$-Cartan matrix of $\NCSF_n$. Below we
write partitions in the order opposite to the order $\ordr$ defined above,
and the zero entries of the matrices are represented by dots to enhance
readability. 

\subsection{$q$-Cartan matrices for unital peak algebras ($r=2$)}

\begin{equation}
C_2^{(2)} =
\left(\begin{array}{cc}
1 & . \\
. & 1
\end{array}\right)
\end{equation}

\begin{equation}
C_3^{(2)} =
\left(\begin{array}{ccc}
1 & . & . \\
. & 1 & . \\
. & . & 1
\end{array}\right)
\end{equation}

\begin{equation}
C_4^{(2)} =
\left(\begin{array}{cccc}
1 & q & . & . \\
. & 1 & . & . \\
. & . & 1 & . \\
. & . & . & 1
\end{array}\right)
\end{equation}

\begin{equation}
C_5^{(2)} =
\left(\begin{array}{cccccc}
1 & . & . & q & . & . \\
. & 1 & . & . & . & . \\
. & . & 1 & q & . & . \\
. & . & . & 1 & . & . \\
. & . & . & . & 1 & . \\
. & . & . & . & . & 1
\end{array}\right)
\end{equation}

\begin{equation}
C_6^{(2)} =
\left(\begin{array}{cccccccc}
1 & . & q & q & . & q^{2} & . & . \\
. & 1 & . & . & . & .     & . & . \\
. & . & 1 & . & . & q     & . & . \\
. & . & . & 1 & . & .     & . & . \\
. & . & . & . & 1 & q     & . & . \\
. & . & . & . & . & 1     & . & . \\
. & . & . & . & . & .     & 1 & . \\
. & . & . & . & . & .     & . & 1
\end{array}\right)
\end{equation}

\begin{equation}
C_7^{(2)} =
\left(\begin{array}{ccccccccccc}
1 & . & . & . & q & q & . & . & q^{2} & . & . \\
. & 1 & . & . & q & . & . & . & . & . & . \\
. & . & 1 & . & . & . & q & . & . & . & . \\
. & . & . & 1 & . & q & q & . & q^{2} & . & . \\
. & . & . & . & 1 & . & . & . & . & . & . \\
. & . & . & . & . & 1 & . & . & q & . & . \\
. & . & . & . & . & . & 1 & . & . & . & . \\
. & . & . & . & . & . & . & 1 & q & . & . \\
. & . & . & . & . & . & . & . & 1 & . & . \\
. & . & . & . & . & . & . & . & . & 1 & . \\
. & . & . & . & . & . & . & . & . & . & 1
\end{array}\right)
\end{equation}

\begin{equation}
C_8^{(2)} =
\left(\begin{array}{cccccccccccccc}
1 & q & q & . & q & q & . & 2 q^{2} & q^{2} & q^{2} & . & q^{3} & . & . \\
. & 1 & . & . & . & . & . & q       & .     & .     & . & .     & . & . \\
. & . & 1 & . & . & . & . & q       & q     & .     & . & q^{2} & . & . \\
. & . & . & 1 & . & . & . & .       & .     & .     & . & .     & . & . \\
. & . & . & . & 1 & . & . & q       & .     & .     & . & .     & . & . \\
. & . & . & . & . & 1 & . & .       & .     & q     & . & .     & . & . \\
. & . & . & . & . & . & 1 & .       & q     & q     & . & q^{2} & . & . \\
. & . & . & . & . & . & . & 1       & .     & .     & . & .     & . & . \\
. & . & . & . & . & . & . & .       & 1     & .     & . & q     & . & . \\
. & . & . & . & . & . & . & .       & .     & 1     & . & .     & . & . \\
. & . & . & . & . & . & . & .       & .     & .     & 1 & q     & . & . \\
. & . & . & . & . & . & . & .       & .     & .     & . & 1     & . & . \\
. & . & . & . & . & . & . & .       & .     & .     & . & .     & 1 & . \\
. & . & . & . & . & . & . & .       & .     & .     & . & .     & . & 1
\end{array}\right)
\end{equation}

\begin{equation}
C_9^{(2)} =
\left(\begin{array}{ccccccccccccccccccc}
1 & . & . & . & . & . & 2 q & q & . & . & . & . & 2 q^{2} & q^{2} & . & . & q^{3} & . & . \\
. & 1 & . & . & . & . & q & . & . & q & . & . & q^{2} & . & . & . & . & . & . \\
. & . & 1 & . & . & . & q & . & q & . & . & . & q^{2} & . & . & . & . & . & . \\
. & . & . & 1 & . & . & . & . & q & . & q & . & . & . & q^{2} & . & . & . & . \\
. & . & . & . & 1 & . & q & q & . & q & q & . & 2 q^{2} & q^{2} & q^{2} & . & q^{3} & . & . \\
. & . & . & . & . & 1 & . & . & . & . & . & . & . & . & . & . & . & . & . \\
. & . & . & . & . & . & 1 & . & . & . & . & . & q & . & . & . & . & . & . \\
. & . & . & . & . & . & . & 1 & . & . & . & . & q & q & . & . & q^{2} & . & . \\
. & . & . & . & . & . & . & . & 1 & . & . & . & . & . & . & . & . & . & . \\
. & . & . & . & . & . & . & . & . & 1 & . & . & q & . & . & . & . & . & . \\
. & . & . & . & . & . & . & . & . & . & 1 & . & . & . & q & . & . & . & . \\
. & . & . & . & . & . & . & . & . & . & . & 1 & . & q & q & . & q^{2} & . & . \\
. & . & . & . & . & . & . & . & . & . & . & . & 1 & . & . & . & . & . & . \\
. & . & . & . & . & . & . & . & . & . & . & . & . & 1 & . & . & q & . & . \\
. & . & . & . & . & . & . & . & . & . & . & . & . & . & 1 & . & . & . & . \\
. & . & . & . & . & . & . & . & . & . & . & . & . & . & . & 1 & q & . & . \\
. & . & . & . & . & . & . & . & . & . & . & . & . & . & . & . & 1 & . & . \\
. & . & . & . & . & . & . & . & . & . & . & . & . & . & . & . & . & 1 & . \\
. & . & . & . & . & . & . & . & . & . & . & . & . & . & . & . & . & . & 1
\end{array}\right)
\end{equation}

\subsection{$q$-Cartan matrices for unital peak algebras of order $r=3$}
\begin{equation}
C_3^{(3)} =
\left(\begin{array}{ccc}
1 & q & . \\
. & 1 & . \\
. & . & 1
\end{array}\right)
\end{equation}

\begin{equation}
C_4^{(3)} =
\left(\begin{array}{ccccc}
1 & . & . & q & . \\
. & 1 & . & . & . \\
. & . & 1 & q & . \\
. & . & . & 1 & . \\
. & . & . & . & 1
\end{array}\right)
\end{equation}

\begin{equation}
C_5^{(3)} =
\left(\begin{array}{ccccccc}
1 & . & q & q & . & q^{2} & . \\
. & 1 & . & q & . & . & . \\
. & . & 1 & . & . & q & . \\
. & . & . & 1 & . & . & . \\
. & . & . & . & 1 & q & . \\
. & . & . & . & . & 1 & . \\
. & . & . & . & . & . & 1
\end{array}\right)
\end{equation}

\begin{equation}
C_6^{(3)} =
\left(\begin{array}{cccccccccc}
1 & q & q & . & q & q^{2} & 2 q^{2} & . & q^{3} & . \\
. & 1 & . & . & . & .     & q       & . & .     & . \\
. & . & 1 & . & . & q     & q       & . & q^{2} & . \\
. & . & . & 1 & . & .     & .       & . & .     & . \\
. & . & . & . & 1 & .     & q       & . & .     & . \\
. & . & . & . & . & 1     & .       & . & q     & . \\
. & . & . & . & . & .     & 1       & . & .     & . \\
. & . & . & . & . & .     & .       & 1 & q     & . \\
. & . & . & . & . & .     & .       & . & 1     & . \\
. & . & . & . & . & .     & .       & . & .     & 1
\end{array}\right)
\end{equation}

\begin{equation}
C_7^{(3)} =
\left(\begin{array}{cccccccccccccc}
1 & . & q & . & . & q^{2} + q & q & q^{2} & . & q^{2} & q^{3} + q^{2} & . & q^{3} & . \\
. & 1 & . & . & . & q & . & . & q & . & q^{2} & . & . & . \\
. & . & 1 & . & . & q & . & q & . & . & q^{2} & . & . & . \\
. & . & . & 1 & . & q & q & . & q & q^{2} & 2 q^{2} & . & q^{3} & . \\
. & . & . & . & 1 & . & . & q & . & . & . & . & . & . \\
. & . & . & . & . & 1 & . & . & . & . & q & . & . & . \\
. & . & . & . & . & . & 1 & . & . & q & q & . & q^{2} & . \\
. & . & . & . & . & . & . & 1 & . & . & . & . & . & . \\
. & . & . & . & . & . & . & . & 1 & . & q & . & . & . \\
. & . & . & . & . & . & . & . & . & 1 & . & . & q & . \\
. & . & . & . & . & . & . & . & . & . & 1 & . & . & . \\
. & . & . & . & . & . & . & . & . & . & . & 1 & q & . \\
. & . & . & . & . & . & . & . & . & . & . & . & 1 & . \\
. & . & . & . & . & . & . & . & . & . & . & . & . & 1
\end{array}\right)
\end{equation}

{
\tiny
\begin{equation}
C_8^{(3)} =
\left(\begin{array}{cccccccccccccccccccc}
1 & . & . & . & q & q & . & q^{2} + q & . & . & . & q^{3} + 2 q^{2} & q^{2} & q^{3} + q^{2} & . & q^{3} & q^{4} + q^{3} & . & q^{4} & . \\
. & 1 & . & . & . & . & . & . & . & . & . & q & . & . & . & . & q^{2} & . & . & . \\
. & . & 1 & . & . & . & q & q & . & . & q & q^{2} & . & q^{2} & q^{2} & . & q^{3} & . & . & . \\
. & . & . & 1 & . & q & . & q & . & . & q & q^{2} & . & 2 q^{2} & . & . & q^{3} & . & . & . \\
. & . & . & . & 1 & . & . & q & . & . & . & q^{2} + q & q & q^{2} & . & q^{2} & q^{3} + q^{2} & . & q^{3} & . \\
. & . & . & . & . & 1 & . & . & . & . & . & . & . & q & . & . & . & . & . & . \\
. & . & . & . & . & . & 1 & . & . & . & . & q & . & . & q & . & q^{2} & . & . & . \\
. & . & . & . & . & . & . & 1 & . & . & . & q & . & q & . & . & q^{2} & . & . & . \\
. & . & . & . & . & . & . & . & 1 & . & . & q & q & . & q & q^{2} & 2 q^{2} & . & q^{3} & . \\
. & . & . & . & . & . & . & . & . & 1 & . & . & . & . & . & . & . & . & . & . \\
. & . & . & . & . & . & . & . & . & . & 1 & . & . & q & . & . & . & . & . & . \\
. & . & . & . & . & . & . & . & . & . & . & 1 & . & . & . & . & q & . & . & . \\
. & . & . & . & . & . & . & . & . & . & . & . & 1 & . & . & q & q & . & q^{2} & . \\
. & . & . & . & . & . & . & . & . & . & . & . & . & 1 & . & . & . & . & . & . \\
. & . & . & . & . & . & . & . & . & . & . & . & . & . & 1 & . & q & . & . & . \\
. & . & . & . & . & . & . & . & . & . & . & . & . & . & . & 1 & . & . & q & . \\
. & . & . & . & . & . & . & . & . & . & . & . & . & . & . & . & 1 & . & . & . \\
. & . & . & . & . & . & . & . & . & . & . & . & . & . & . & . & . & 1 & q & . \\
. & . & . & . & . & . & . & . & . & . & . & . & . & . & . & . & . & . & 1 & . \\
. & . & . & . & . & . & . & . & . & . & . & . & . & . & . & . & . & . & . & 1
\end{array}\right)
\end{equation}
}

\subsection{$q$-Cartan matrices for unital peak algebras of order $r=4$}

\begin{equation}
C_4^{(4)} =
\left(\begin{array}{ccccc}
1 & . & q & q^{2} & . \\
. & 1 & . & . & . \\
. & . & 1 & q & . \\
. & . & . & 1 & . \\
. & . & . & . & 1
\end{array}\right)
\end{equation}

\begin{equation}
C_5^{(4)} =
\left(\begin{array}{ccccccc}
1 & q & . & q^{2} & q & q^{2} & . \\
. & 1 & . & q & . & . & . \\
. & . & 1 & . & q & q^{2} & . \\
. & . & . & 1 & . & . & . \\
. & . & . & . & 1 & q & . \\
. & . & . & . & . & 1 & . \\
. & . & . & . & . & . & 1
\end{array}\right)
\end{equation}

\pagebreak[4]

\begin{equation}
C_6^{(4)} =
\left(\begin{array}{ccccccccccc}
1 & . & . & q & . & q^{2} + q & . & q^{3} & q^{2} & q^{3} & . \\
. & 1 & . & . & . & q & . & q^{2} & . & . & . \\
. & . & 1 & . & . & q & . & q^{2} & . & . & . \\
. & . & . & 1 & . & q & . & q^{2} & q & q^{2} & . \\
. & . & . & . & 1 & . & . & . & . & . & . \\
. & . & . & . & . & 1 & . & q & . & . & . \\
. & . & . & . & . & . & 1 & . & q & q^{2} & . \\
. & . & . & . & . & . & . & 1 & . & . & . \\
. & . & . & . & . & . & . & . & 1 & q & . \\
. & . & . & . & . & . & . & . & . & 1 & . \\
. & . & . & . & . & . & . & . & . & . & 1
\end{array}\right)
\end{equation}

\begin{equation}
C_7^{(4)} =
\left(\begin{array}{ccccccccccccccc}
1 & . & q & q & q^{2} & q & . & q^{2} & q^{3} & q^{3} + 2 q^{2} & . & q^{4} + q^{3} & q^{3} & q^{4} & . \\
. & 1 & . & . & . & q & q & . & . & 2 q^{2} & . & q^{3} & . & . & . \\
. & . & 1 & . & q & . & . & . & q^{2} & q & . & q^{2} & . & . & . \\
. & . & . & 1 & . & . & . & q & . & q^{2} + q & . & q^{3} & q^{2} & q^{3} & . \\
. & . & . & . & 1 & . & . & . & q & . & . & . & . & . & . \\
. & . & . & . & . & 1 & . & . & . & q & . & q^{2} & . & . & . \\
. & . & . & . & . & . & 1 & . & . & q & . & q^{2} & . & . & . \\
. & . & . & . & . & . & . & 1 & . & q & . & q^{2} & q & q^{2} & . \\
. & . & . & . & . & . & . & . & 1 & . & . & . & . & . & . \\
. & . & . & . & . & . & . & . & . & 1 & . & q & . & . & . \\
. & . & . & . & . & . & . & . & . & . & 1 & . & q & q^{2} & . \\
. & . & . & . & . & . & . & . & . & . & . & 1 & . & . & . \\
. & . & . & . & . & . & . & . & . & . & . & . & 1 & q & . \\
. & . & . & . & . & . & . & . & . & . & . & . & . & 1 & . \\
. & . & . & . & . & . & . & . & . & . & . & . & . & . & 1
\end{array}\right)
\end{equation}

{
\tiny
\begin{equation}
C_8^{(4)} =
\left(\begin{array}{ccccccccccccccccccccc}
1 & q & q & q & q^{2} & . & q & 2 q^{2} & q^{2} & . & 3 q^{3} & 2 q^{2} & q^{2} & q^{3} & 2 q^{4} & q^{4} + 4 q^{3} & . & q^{5} + 2 q^{4} & q^{4} & q^{5} & . \\
. & 1 & . & . & q & . & . & q & . & . & 2 q^{2} & q & . & . & q^{3} & 2 q^{2} & . & q^{3} & . & . & . \\
. & . & 1 & . & . & . & . & q & . & . & q^{2} + q & . & . & . & q^{3} & q^{2} & . & q^{3} & . & . & . \\
. & . & . & 1 & . & . & . & q & q & . & q^{2} & q & . & q^{2} & q^{3} & q^{3} + 2 q^{2} & . & q^{4} + q^{3} & q^{3} & q^{4} & . \\
. & . & . & . & 1 & . & . & . & . & . & q & . & . & . & q^{2} & . & . & . & . & . & . \\
. & . & . & . & . & 1 & . & . & . & . & q & . & . & . & q^{2} & . & . & . & . & . & . \\
. & . & . & . & . & . & 1 & . & . & . & . & q & q & . & . & 2 q^{2} & . & q^{3} & . & . & . \\
. & . & . & . & . & . & . & 1 & . & . & q & . & . & . & q^{2} & q & . & q^{2} & . & . & . \\
. & . & . & . & . & . & . & . & 1 & . & . & . & . & q & . & q^{2} + q & . & q^{3} & q^{2} & q^{3} & . \\
. & . & . & . & . & . & . & . & . & 1 & . & . & . & . & . & . & . & . & . & . & . \\
. & . & . & . & . & . & . & . & . & . & 1 & . & . & . & q & . & . & . & . & . & . \\
. & . & . & . & . & . & . & . & . & . & . & 1 & . & . & . & q & . & q^{2} & . & . & . \\
. & . & . & . & . & . & . & . & . & . & . & . & 1 & . & . & q & . & q^{2} & . & . & . \\
. & . & . & . & . & . & . & . & . & . & . & . & . & 1 & . & q & . & q^{2} & q & q^{2} & . \\
. & . & . & . & . & . & . & . & . & . & . & . & . & . & 1 & . & . & . & . & . & . \\
. & . & . & . & . & . & . & . & . & . & . & . & . & . & . & 1 & . & q & . & . & . \\
. & . & . & . & . & . & . & . & . & . & . & . & . & . & . & . & 1 & . & q & q^{2} & . \\
. & . & . & . & . & . & . & . & . & . & . & . & . & . & . & . & . & 1 & . & . & . \\
. & . & . & . & . & . & . & . & . & . & . & . & . & . & . & . & . & . & 1 & q & . \\
. & . & . & . & . & . & . & . & . & . & . & . & . & . & . & . & . & . & . & 1 & . \\
. & . & . & . & . & . & . & . & . & . & . & . & . & . & . & . & . & . & . & . & 1
\end{array}\right)
\end{equation}
}

\subsection{$q$-Cartan matrices for unital peak algebras of order $r=5$}

\begin{equation}
C_5^{(5)} =
\left(\begin{array}{ccccccc}
1 & q & q & q^{2} & q^{2} & q^{3} & . \\
. & 1 & . & q & . & . & . \\
. & . & 1 & . & q & q^{2} & . \\
. & . & . & 1 & . & . & . \\
. & . & . & . & 1 & q & . \\
. & . & . & . & . & 1 & . \\
. & . & . & . & . & . & 1
\end{array}\right)
\end{equation}

\begin{equation}
C_6^{(5)} =
\left(\begin{array}{ccccccccccc}
1 & . & q & . & . & q^{2} + q & q & q^{3} & q^{2} & q^{3} & . \\
. & 1 & . & . & . & q & . & q^{2} & . & . & . \\
. & . & 1 & . & . & q & . & q^{2} & . & . & . \\
. & . & . & 1 & . & q & q & q^{2} & q^{2} & q^{3} & . \\
. & . & . & . & 1 & . & . & . & . & . & . \\
. & . & . & . & . & 1 & . & q & . & . & . \\
. & . & . & . & . & . & 1 & . & q & q^{2} & . \\
. & . & . & . & . & . & . & 1 & . & . & . \\
. & . & . & . & . & . & . & . & 1 & q & . \\
. & . & . & . & . & . & . & . & . & 1 & . \\
. & . & . & . & . & . & . & . & . & . & 1
\end{array}\right)
\end{equation}

\begin{equation}
C_7^{(5)} =
\left(\begin{array}{ccccccccccccccc}
1 & q & . & q & q & q^{2} & 2 q^{2} & . & q^{2} & 3 q^{3} & q^{2} & 2 q^{4} & q^{3} & q^{4} & . \\
. & 1 & . & . & . & q & q & . & . & 2 q^{2} & . & q^{3} & . & . & . \\
. & . & 1 & . & q & . & q & . & q^{2} & q^{2} & . & q^{3} & . & . & . \\
. & . & . & 1 & . & . & q & . & . & q^{2} + q & q & q^{3} & q^{2} & q^{3} & . \\
. & . & . & . & 1 & . & . & . & q & . & . & . & . & . & . \\
. & . & . & . & . & 1 & . & . & . & q & . & q^{2} & . & . & . \\
. & . & . & . & . & . & 1 & . & . & q & . & q^{2} & . & . & . \\
. & . & . & . & . & . & . & 1 & . & q & q & q^{2} & q^{2} & q^{3} & . \\
. & . & . & . & . & . & . & . & 1 & . & . & . & . & . & . \\
. & . & . & . & . & . & . & . & . & 1 & . & q & . & . & . \\
. & . & . & . & . & . & . & . & . & . & 1 & . & q & q^{2} & . \\
. & . & . & . & . & . & . & . & . & . & . & 1 & . & . & . \\
. & . & . & . & . & . & . & . & . & . & . & . & 1 & q & . \\
. & . & . & . & . & . & . & . & . & . & . & . & . & 1 & . \\
. & . & . & . & . & . & . & . & . & . & . & . & . & . & 1
\end{array}\right)
\end{equation}

{
\tiny
\begin{equation}
C_8^{(5)} =
\left(\begin{array}{cccccccccccccccccccccc}
1 & . & . & q & q & q & q^{2} & q^{2}\! +\! q & . & q^{2} & . & q^{3}\! +\! 2
q^{2} & q^{3} & 2 q^{3} \!+\! q^{2} & . & q^{4} \!+\! q^{3} & 3 q^{4} \!+\! q^{3} & q^{3} & 2 q^{5} & q^{4} & q^{5} & . \\
. & 1 & . & . & . & . & . & q & . & . & . & . & q^{2} & q^{2} & . & . & q^{3} & . & q^{4} & . & . & . \\
. & . & 1 & . & . & q & . & q & q & . & . & 2 q^{2} & q^{2} & q^{2} & . & q^{3} & 2 q^{3} & . & q^{4} & . & . & . \\
. & . & . & 1 & . & . & q & . & . & . & . & q^{2} \!+\! q & . & q & . & q^{3} & q^{2} & . & q^{3} & . & . & . \\
. & . & . & . & 1 & . & . & q & . & q & . & q & q^{2} & 2 q^{2} & . & q^{2} & 3 q^{3} & q^{2} & 2 q^{4} & q^{3} & q^{4} & . \\
. & . & . & . & . & 1 & . & . & . & . & . & q & . & . & . & q^{2} & . & . & . & . & . & . \\
. & . & . & . & . & . & 1 & . & . & . & . & q & . & . & . & q^{2} & . & . & . & . & . & . \\
. & . & . & . & . & . & . & 1 & . & . & . & . & q & q & . & . & 2 q^{2} & . & q^{3} & . & . & . \\
. & . & . & . & . & . & . & . & 1 & . & . & q & . & q & . & q^{2} & q^{2} & . & q^{3} & . & . & . \\
. & . & . & . & . & . & . & . & . & 1 & . & . & . & q & . & . & q^{2} \!+\! q & q & q^{3} & q^{2} & q^{3} & . \\
. & . & . & . & . & . & . & . & . & . & 1 & . & . & . & . & . & . & . & . & . & . & . \\
. & . & . & . & . & . & . & . & . & . & . & 1 & . & . & . & q & . & . & . & . & . & . \\
. & . & . & . & . & . & . & . & . & . & . & . & 1 & . & . & . & q & . & q^{2} & . & . & . \\
. & . & . & . & . & . & . & . & . & . & . & . & . & 1 & . & . & q & . & q^{2} & . & . & . \\
. & . & . & . & . & . & . & . & . & . & . & . & . & . & 1 & . & q & q & q^{2} & q^{2} & q^{3} & . \\
. & . & . & . & . & . & . & . & . & . & . & . & . & . & . & 1 & . & . & . & . & . & . \\
. & . & . & . & . & . & . & . & . & . & . & . & . & . & . & . & 1 & . & q & . & . & . \\
. & . & . & . & . & . & . & . & . & . & . & . & . & . & . & . & . & 1 & . & q & q^{2} & . \\
. & . & . & . & . & . & . & . & . & . & . & . & . & . & . & . & . & . & 1 & . & . & . \\
. & . & . & . & . & . & . & . & . & . & . & . & . & . & . & . & . & . & . & 1 & q & . \\
. & . & . & . & . & . & . & . & . & . & . & . & . & . & . & . & . & . & . & . & 1 & . \\
. & . & . & . & . & . & . & . & . & . & . & . & . & . & . & . & . & . & . & . & . & 1
\end{array}\right)
\end{equation}
}

\subsection{$q$-Cartan matrices for unital peak algebras of order $r=6$}

\begin{equation}
C_6^{(6)} =
\left(\begin{array}{ccccccccccc}
1 & . & q & q & . & 2 q^{2} & q^{2} & q^{3} & q^{3} & q^{4} & . \\
. & 1 & . & . & . & q & . & q^{2} & . & . & . \\
. & . & 1 & . & . & q & . & q^{2} & . & . & . \\
. & . & . & 1 & . & q & q & q^{2} & q^{2} & q^{3} & . \\
. & . & . & . & 1 & . & . & . & . & . & . \\
. & . & . & . & . & 1 & . & q & . & . & . \\
. & . & . & . & . & . & 1 & . & q & q^{2} & . \\
. & . & . & . & . & . & . & 1 & . & . & . \\
. & . & . & . & . & . & . & . & 1 & q & . \\
. & . & . & . & . & . & . & . & . & 1 & . \\
. & . & . & . & . & . & . & . & . & . & 1
\end{array}\right)
\end{equation}

\begin{equation}
C_7^{(6)} =
\left(\begin{array}{ccccccccccccccc}
1 & q & q & . & q^{2} & q^{2} & 2 q^{2} & q & q^{3} & 3 q^{3} & q^{2} & 2 q^{4} & q^{3} & q^{4} & . \\
. & 1 & . & . & . & q & q & . & . & 2 q^{2} & . & q^{3} & . & . & . \\
. & . & 1 & . & q & . & q & . & q^{2} & q^{2} & . & q^{3} & . & . & . \\
. & . & . & 1 & . & . & q & q & . & 2 q^{2} & q^{2} & q^{3} & q^{3} & q^{4} & . \\
. & . & . & . & 1 & . & . & . & q & . & . & . & . & . & . \\
. & . & . & . & . & 1 & . & . & . & q & . & q^{2} & . & . & . \\
. & . & . & . & . & . & 1 & . & . & q & . & q^{2} & . & . & . \\
. & . & . & . & . & . & . & 1 & . & q & q & q^{2} & q^{2} & q^{3} & . \\
. & . & . & . & . & . & . & . & 1 & . & . & . & . & . & . \\
. & . & . & . & . & . & . & . & . & 1 & . & q & . & . & . \\
. & . & . & . & . & . & . & . & . & . & 1 & . & q & q^{2} & . \\
. & . & . & . & . & . & . & . & . & . & . & 1 & . & . & . \\
. & . & . & . & . & . & . & . & . & . & . & . & 1 & q & . \\
. & . & . & . & . & . & . & . & . & . & . & . & . & 1 & . \\
. & . & . & . & . & . & . & . & . & . & . & . & . & . & 1
\end{array}\right)
\end{equation}

{
\tiny
\begin{equation}
C_8^{(6)} =
\left(\begin{array}{cccccccccccccccccccccc}
1 & . & q & . & q & q^{2} & q & 2 q^{2} & 2 q^{2} & . & . & 3 q^{3} & q^{3} & 3 q^{3} & q^{2} & 2 q^{4} & 4 q^{4} & q^{3} & 2 q^{5} & q^{4} & q^{5} & . \\
. & 1 & . & . & . & . & . & q & . & . & . & . & q^{2} & q^{2} & . & . & q^{3} & . & q^{4} & . & . & . \\
. & . & 1 & . & . & q & . & q & q & . & . & 2 q^{2} & q^{2} & q^{2} & . & q^{3} & 2 q^{3} & . & q^{4} & . & . & . \\
. & . & . & 1 & . & . & q & . & q & . & . & 2 q^{2} & . & q^{2} & . & q^{3} & q^{3} & . & q^{4} & . & . & . \\
. & . & . & . & 1 & . & . & q & q & . & . & q^{2} & q^{2} & 2 q^{2} & q & q^{3} & 3 q^{3} & q^{2} & 2 q^{4} & q^{3} & q^{4} & . \\
. & . & . & . & . & 1 & . & . & . & . & . & q & . & . & . & q^{2} & . & . & . & . & . & . \\
. & . & . & . & . & . & 1 & . & . & . & . & q & . & . & . & q^{2} & . & . & . & . & . & . \\
. & . & . & . & . & . & . & 1 & . & . & . & . & q & q & . & . & 2 q^{2} & . & q^{3} & . & . & . \\
. & . & . & . & . & . & . & . & 1 & . & . & q & . & q & . & q^{2} & q^{2} & . & q^{3} & . & . & . \\
. & . & . & . & . & . & . & . & . & 1 & . & . & . & q & q & . & 2 q^{2} & q^{2} & q^{3} & q^{3} & q^{4} & . \\
. & . & . & . & . & . & . & . & . & . & 1 & . & . & . & . & . & . & . & . & . & . & . \\
. & . & . & . & . & . & . & . & . & . & . & 1 & . & . & . & q & . & . & . & . & . & . \\
. & . & . & . & . & . & . & . & . & . & . & . & 1 & . & . & . & q & . & q^{2} & . & . & . \\
. & . & . & . & . & . & . & . & . & . & . & . & . & 1 & . & . & q & . & q^{2} & . & . & . \\
. & . & . & . & . & . & . & . & . & . & . & . & . & . & 1 & . & q & q & q^{2} & q^{2} & q^{3} & . \\
. & . & . & . & . & . & . & . & . & . & . & . & . & . & . & 1 & . & . & . & . & . & . \\
. & . & . & . & . & . & . & . & . & . & . & . & . & . & . & . & 1 & . & q & . & . & . \\
. & . & . & . & . & . & . & . & . & . & . & . & . & . & . & . & . & 1 & . & q & q^{2} & . \\
. & . & . & . & . & . & . & . & . & . & . & . & . & . & . & . & . & . & 1 & . & . & . \\
. & . & . & . & . & . & . & . & . & . & . & . & . & . & . & . & . & . & . & 1 & q & . \\
. & . & . & . & . & . & . & . & . & . & . & . & . & . & . & . & . & . & . & . & 1 & . \\
. & . & . & . & . & . & . & . & . & . & . & . & . & . & . & . & . & . & . & . & . & 1
\end{array}\right)
\end{equation}
}

\subsection{$q$-Cartan matrices for unital peak algebras of order $r=7$}

\begin{equation}
C_7^{(7)} =
\left(\begin{array}{ccccccccccccccc}
1 & q & q & q & q^{2} & q^{2} & 2 q^{2} & q^{2} & q^{3} & 3 q^{3} & q^{3} & 2 q^{4} & q^{4} & q^{5} & . \\
. & 1 & . & . & . & q & q & . & . & 2 q^{2} & . & q^{3} & . & . & . \\
. & . & 1 & . & q & . & q & . & q^{2} & q^{2} & . & q^{3} & . & . & . \\
. & . & . & 1 & . & . & q & q & . & 2 q^{2} & q^{2} & q^{3} & q^{3} & q^{4} & . \\
. & . & . & . & 1 & . & . & . & q & . & . & . & . & . & . \\
. & . & . & . & . & 1 & . & . & . & q & . & q^{2} & . & . & . \\
. & . & . & . & . & . & 1 & . & . & q & . & q^{2} & . & . & . \\
. & . & . & . & . & . & . & 1 & . & q & q & q^{2} & q^{2} & q^{3} & . \\
. & . & . & . & . & . & . & . & 1 & . & . & . & . & . & . \\
. & . & . & . & . & . & . & . & . & 1 & . & q & . & . & . \\
. & . & . & . & . & . & . & . & . & . & 1 & . & q & q^{2} & . \\
. & . & . & . & . & . & . & . & . & . & . & 1 & . & . & . \\
. & . & . & . & . & . & . & . & . & . & . & . & 1 & q & . \\
. & . & . & . & . & . & . & . & . & . & . & . & . & 1 & . \\
. & . & . & . & . & . & . & . & . & . & . & . & . & . & 1
\end{array}\right)
\end{equation}

{
\tiny
\begin{equation}
C_8^{(7)} =
\left(\begin{array}{cccccccccccccccccccccc}
1 & . & q & q & . & q^{2} & q^{2} & q^{2} + q & 2 q^{2} & q & . & 3 q^{3} & q^{3} & 2 q^{3} + q^{2} & q^{2} & 2 q^{4} & 3 q^{4} + q^{3} & q^{3} & 2 q^{5} & q^{4} & q^{5} & . \\
. & 1 & . & . & . & . & . & q & . & . & . & . & q^{2} & q^{2} & . & . & q^{3} & . & q^{4} & . & . & . \\
. & . & 1 & . & . & q & . & q & q & . & . & 2 q^{2} & q^{2} & q^{2} & . & q^{3} & 2 q^{3} & . & q^{4} & . & . & . \\
. & . & . & 1 & . & . & q & . & q & . & . & 2 q^{2} & . & q^{2} & . & q^{3} & q^{3} & . & q^{4} & . & . & . \\
. & . & . & . & 1 & . & . & q & q & q & . & q^{2} & q^{2} & 2 q^{2} & q^{2} & q^{3} & 3 q^{3} & q^{3} & 2 q^{4} & q^{4} & q^{5} & . \\
. & . & . & . & . & 1 & . & . & . & . & . & q & . & . & . & q^{2} & . & . & . & . & . & . \\
. & . & . & . & . & . & 1 & . & . & . & . & q & . & . & . & q^{2} & . & . & . & . & . & . \\
. & . & . & . & . & . & . & 1 & . & . & . & . & q & q & . & . & 2 q^{2} & . & q^{3} & . & . & . \\
. & . & . & . & . & . & . & . & 1 & . & . & q & . & q & . & q^{2} & q^{2} & . & q^{3} & . & . & . \\
. & . & . & . & . & . & . & . & . & 1 & . & . & . & q & q & . & 2 q^{2} & q^{2} & q^{3} & q^{3} & q^{4} & . \\
. & . & . & . & . & . & . & . & . & . & 1 & . & . & . & . & . & . & . & . & . & . & . \\
. & . & . & . & . & . & . & . & . & . & . & 1 & . & . & . & q & . & . & . & . & . & . \\
. & . & . & . & . & . & . & . & . & . & . & . & 1 & . & . & . & q & . & q^{2} & . & . & . \\
. & . & . & . & . & . & . & . & . & . & . & . & . & 1 & . & . & q & . & q^{2} & . & . & . \\
. & . & . & . & . & . & . & . & . & . & . & . & . & . & 1 & . & q & q & q^{2} & q^{2} & q^{3} & . \\
. & . & . & . & . & . & . & . & . & . & . & . & . & . & . & 1 & . & . & . & . & . & . \\
. & . & . & . & . & . & . & . & . & . & . & . & . & . & . & . & 1 & . & q & . & . & . \\
. & . & . & . & . & . & . & . & . & . & . & . & . & . & . & . & . & 1 & . & q & q^{2} & . \\
. & . & . & . & . & . & . & . & . & . & . & . & . & . & . & . & . & . & 1 & . & . & . \\
. & . & . & . & . & . & . & . & . & . & . & . & . & . & . & . & . & . & . & 1 & q & . \\
. & . & . & . & . & . & . & . & . & . & . & . & . & . & . & . & . & . & . & . & 1 & . \\
. & . & . & . & . & . & . & . & . & . & . & . & . & . & . & . & . & . & . & . & . & 1
\end{array}\right)
\end{equation}
}

\subsection{$q$-Cartan matrices for unital peak algebras of order $r=8$}

{
\footnotesize
\begin{equation}
C_8^{(8)} =
\left(\begin{array}{cccccccccccccccccccccc}
1 & . & q & q & q & q^{2} & q^{2} & 2 q^{2} & 2 q^{2} & q^{2} & . & 3 q^{3} & q^{3} & 3 q^{3} & q^{3} & 2 q^{4} & 4 q^{4} & q^{4} & 2 q^{5} & q^{5} & q^{6} & . \\
. & 1 & . & . & . & . & . & q & . & . & . & . & q^{2} & q^{2} & . & . & q^{3} & . & q^{4} & . & . & . \\
. & . & 1 & . & . & q & . & q & q & . & . & 2 q^{2} & q^{2} & q^{2} & . & q^{3} & 2 q^{3} & . & q^{4} & . & . & . \\
. & . & . & 1 & . & . & q & . & q & . & . & 2 q^{2} & . & q^{2} & . & q^{3} & q^{3} & . & q^{4} & . & . & . \\
. & . & . & . & 1 & . & . & q & q & q & . & q^{2} & q^{2} & 2 q^{2} & q^{2} & q^{3} & 3 q^{3} & q^{3} & 2 q^{4} & q^{4} & q^{5} & . \\
. & . & . & . & . & 1 & . & . & . & . & . & q & . & . & . & q^{2} & . & . & . & . & . & . \\
. & . & . & . & . & . & 1 & . & . & . & . & q & . & . & . & q^{2} & . & . & . & . & . & . \\
. & . & . & . & . & . & . & 1 & . & . & . & . & q & q & . & . & 2 q^{2} & . & q^{3} & . & . & . \\
. & . & . & . & . & . & . & . & 1 & . & . & q & . & q & . & q^{2} & q^{2} & . & q^{3} & . & . & . \\
. & . & . & . & . & . & . & . & . & 1 & . & . & . & q & q & . & 2 q^{2} & q^{2} & q^{3} & q^{3} & q^{4} & . \\
. & . & . & . & . & . & . & . & . & . & 1 & . & . & . & . & . & . & . & . & . & . & . \\
. & . & . & . & . & . & . & . & . & . & . & 1 & . & . & . & q & . & . & . & . & . & . \\
. & . & . & . & . & . & . & . & . & . & . & . & 1 & . & . & . & q & . & q^{2} & . & . & . \\
. & . & . & . & . & . & . & . & . & . & . & . & . & 1 & . & . & q & . & q^{2} & . & . & . \\
. & . & . & . & . & . & . & . & . & . & . & . & . & . & 1 & . & q & q & q^{2} & q^{2} & q^{3} & . \\
. & . & . & . & . & . & . & . & . & . & . & . & . & . & . & 1 & . & . & . & . & . & . \\
. & . & . & . & . & . & . & . & . & . & . & . & . & . & . & . & 1 & . & q & . & . & . \\
. & . & . & . & . & . & . & . & . & . & . & . & . & . & . & . & . & 1 & . & q & q^{2} & . \\
. & . & . & . & . & . & . & . & . & . & . & . & . & . & . & . & . & . & 1 & . & . & . \\
. & . & . & . & . & . & . & . & . & . & . & . & . & . & . & . & . & . & . & 1 & q & . \\
. & . & . & . & . & . & . & . & . & . & . & . & . & . & . & . & . & . & . & . & 1 & . \\
. & . & . & . & . & . & . & . & . & . & . & . & . & . & . & . & . & . & . & . & . & 1
\end{array}\right)
\end{equation}
}

\footnotesize

\end{document}